\colorlet{lightgray}{gray!25}
\newtheorem{thm}{\bf Theorem}[section]
\newtheorem{eg}[thm]{\bf Example}
\newtheorem{prop}[thm]{\bf Proposition}
\newtheorem{cor}[thm]{\bf Corollary}
\newtheorem{rem}[thm]{\bf Remark}
\newtheorem{mydef}[thm]{\bf Definition}
\newtheorem{lem}[thm]{\bf Lemma}
\newcommand{\CC}{\mathbb{C}}
\newcommand{\RR}{\mathbb{R}}
\newcommand{\ZZ}{\mathbb{Z}}
\newcommand{\PP}{\mathbb{P}}
\DeclareMathOperator{\GL}{\mathrm{GL}}
\DeclareMathOperator{\SL}{\mathrm{SL}}
\DeclareMathOperator{\Gr}{\mathrm{Gr}}
\DeclareMathOperator{\Mat}{\mathrm{Mat}}
\newcommand{\Pnr}{\mathcal{P}(n,r)}
\newcommand{\Anr}{\mathcal{A}(n,r)}
\newcommand{\Bnr}{\mathcal{B}(n,r)}
\DeclareMathOperator{\sign}{\mathrm{sign}}
\DeclareMathOperator{\Bl}{\mathrm{Bl}}
\DeclareMathOperator{\LQ}{\mathrm{LQ}}
\DeclareMathOperator{\Cones}{\mathrm{Cones}}
\newcommand{\U}{\"u}
\renewcommand{\emptyset}{\varnothing}
\newtheorem{claim}[thm]{Claim}
\newcolumntype{C}[1]{>{\centering\let\newline\\\arraybackslash\hspace{0pt}}m{#1}}
\title[Unwinding Toric Degenerations]{Unwinding Toric Degenerations and \\ Mirror Symmetry for Grassmannians}
\author{Tom Coates$^{1}$}
\address{$^1$Department of Mathematics\\ Imperial College London}
\email{t.coates@imperial.ac.uk}
\author{Charles Doran$^{2,3}$}
\address{$^{2}$Department of Mathematics\\ University of Alberta}
\address{$^{3}$Center of Mathematical Sciences and Applications\\ Harvard University}
\email{charles.doran@ualberta.ca}
\author{Elana Kalashnikov$^{4}$}
\address{{$^4$Department of Mathematics \\ 
University of Waterloo}}
\email{e2kalash@uwaterloo.ca}
\subjclass[2020]{Primary 14J33; Secondary 14M15, 52B20.}
\begin{document}

\begin{abstract} The most fundamental example of mirror symmetry compares the Fermat hypersurfaces in $\PP^n$ and $\PP^n/G$, where $G$ is a finite group that acts on~$\PP^n$ and preserves the Fermat hypersurface. We generalise this to hypersurfaces in Grassmannians, where the picture is richer and more complex. There is a finite group $G$ that acts on the Grassmannian $\Gr(n,r)$ and preserves an appropriate Calabi--Yau hypersurface. We establish how mirror symmetry, toric degenerations, blow-ups and variation of GIT relate the Calabi--Yau hypersurfaces inside $\Gr(n,r)$ and $\Gr(n,r)/G$. This allows us to describe a compactification of the Eguchi--Hori--Xiong mirror to the Grassmannian, inside a blow-up of the quotient of the Grassmannian by~$G$.
\end{abstract}

\maketitle
\section*{Introduction}
In the most famous and fundamental of examples of mirror symmetry, the Fermat hypersurface in $\PP^{n-1}$ mirrors the Fermat hypersurface in a finite group quotient of $\PP^{n-1}$. The action of the finite group on $\PP^{n-1}$ is obtained by the action of the group
\[\tilde{H}_{n,1}=\langle (\zeta_1,\dots,\zeta_{n}) \mid \zeta_i^n=1, \, \prod \zeta_i=1 \rangle\] 
which is identified with a subgroup of the diagonal matrices of $\SL(\CC^n)$, and hence acts naturally on $\CC^n$. This descends to an action on the GIT quotient $\CC^n/\!/\CC^*=\PP^{n-1}$; we let $H_{n,1}$ denote the quotient group of $\tilde{H}_{n,1}$ that acts effectively on $\PP^{n-1}$. The Fermat hypersurface in $\PP^{n-1}$ is mirror to the Fermat hypersurface in $\PP^{n-1}/H_{n,1}$. This is the first in a series of examples appearing in Greene--Plesser \cite{greene}. 

In this paper, we explore the analogue of Greene--Plesser mirror symmetry for Grassmannians. The Grassmannian of quotients  can be constructed as a GIT quotient $\Mat(r \times n)/\!/\GL(r)$. The group
\begin{equation} \label{eq:fg} \tilde{H}_{n,r}=\langle (\zeta_1,\dots,\zeta_{n}) \mid \zeta_i^n=1, \, (\prod \zeta_i)^r=1 \rangle\end{equation}
is identified with a subgroup of the diagonal matrices in $\GL(\CC^n)$, which acts on $\Mat(r\times n)$ by multiplication on the right. It can also be identified with a subgroup in $\SL(\Mat(r\times n))$ acting  on $\Mat(r \times n)$ as a vector space -- it is in this sense that the $(\prod \zeta_i)^r=1$ is a determinant $1$ condition. The action of $\tilde{H}_{n,r}$ descends to an action on $\Gr(n,r)$; we let $H_{n,r}$ denote the quotient of $\tilde{H}_{n,r}$ that acts effectively on $\Gr(n,r)$. 

We relate $\Gr(n,r)$ and $\Gr(n,r)/{H}_{n,r}$ via mirror symmetry, although -- as suits the Grassmannian -- the story is richer and more complex, involving not just mirror duality but also toric degenerations, a variation of polytope duality, blow-ups, and variation of GIT. In unpacking this picture, we also provide a compactification of the fibers of the Eguchi--Hori--Xiong \cite{eguchi} mirror of the Grassmannian. With a bit of notation, we can state the main result. 

Let $P$ be a lattice polytope and $P^\vee$ its dual. The variety $X_P$ is the toric variety built from the spanning fan of $P$. Mirror pairs of Calabi--Yau hypersurfaces come from pairs of toric varieties $X_P, X_{P^\vee}$. We define the \emph{primitive dual} of a toric variety $X_P$ as  $X_Q$, where $Q$ is the dual polytope of $P$ viewed in the lattice spanned by its vertices (see Definition \ref{def:primitivedual}).  The {primitive dual} of $\PP^{n-1}$ is itself: this explains the re-appearance of $\PP^{n-1}$ (albeit with a group action) under mirror duality.  

In \cite{GL}, Gonciulea and Lakshmibai describe a toric degeneration of the Grassmannian to the toric variety associated to a Gelfand--Cetlin polytope. We call this the Gelfand--Cetlin toric degeneration. 
\begin{thm}\label{thm:intro1}[Theorem \ref{thm:Gsmooth}] Let $X_P$ be the special fiber of the Gelfand--Cetlin toric degeneration, and $X_Q$ its primitive dual. Then there a smoothing of the Batyrev--Borisov dual of $X_Q$ to $\Gr(n,r)/H_{n,r}$.
\end{thm}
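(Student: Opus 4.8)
The plan is to show that applying the Batyrev--Borisov dual to the primitive dual of $X_P$ returns $X_P$ itself, but built with respect to a refined lattice, so that this variety is a finite abelian quotient $X_P/G$; and then to obtain the smoothing by descending the Gelfand--Cetlin degeneration of $\Gr(n,r)$ along the quotient by $G$, which I will identify with $H_{n,r}$.

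First I would carry out the lattice bookkeeping. Write $N$ for the lattice in which the (reflexive) polytope $P$ lives and $M=\Hom(N,\ZZ)$, so that $P^\vee\subseteq M_\RR$ is the polar polytope. By definition $X_Q$ is the toric variety of the face fan of $Q=P^\vee$ taken inside the sublattice $M_0\subseteq M$ generated by the vertices of $P^\vee$, a sublattice of finite index. Dualizing gives an overlattice $N\subseteq N_0:=\Hom(M_0,\ZZ)$ with $N_0/N\cong M/M_0$. Since $Q$ is reflexive in $M_0$, with polar the lattice polytope $(P^\vee)^\vee=P$ sitting in $N_0$, the Batyrev--Borisov dual of $X_Q$ is the toric variety of the face fan of $P$ in the \emph{larger} lattice $N_0$. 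As $N\subseteq N_0$, this is exactly the global quotient $X_P/G$ by the finite group $G=N_0/N$, acting through the big torus $T_N$ of $X_P$ as the kernel of the isogeny $T_N\twoheadrightarrow T_{N_0}$.

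Next I would identify $G$ with $H_{n,r}$, both as an abstract group and as a subgroup of $T_N$. This is the combinatorial core of the argument: one must describe the vertices of the polar of the Gelfand--Cetlin polytope, compute the sublattice $M_0\subseteq M$ they generate, and then check that the resulting group $N_0/N$ coincides, inside the big torus of $X_P$, with the image of the diagonal group $\tilde H_{n,r}\subseteq\GL(\CC^n)$ of \eqref{eq:fg} acting on $\Gr(n,r)$ (passing to the torus automatically kills the ineffective scalars, leaving $H_{n,r}$). I expect this to be the main obstacle, since the facets and vertices of the Gelfand--Cetlin polytope are combinatorially intricate, and one must match both the index $[M:M_0]$ and the precise cocharacter directions with the arithmetic of $\tilde H_{n,r}$. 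The case $r=1$ is a guiding check: there $\Gr(n,1)=\PP^{n-1}$ is its own primitive dual, $M_0$ has index $n^{n-2}$ in $M$, and $G\cong H_{n,1}$ recovers the classical Greene--Plesser group.

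Finally I would produce the smoothing. The Gonciulea--Lakshmibai degeneration of $\Gr(n,r)$ to $X_P$ is an initial-ideal degeneration in the Pl\"ucker embedding that is equivariant for the maximal diagonal torus of $\GL(\CC^n)$; as $\tilde H_{n,r}$ lies in this torus, the total space $\mathcal X\to\mathbb A^1$ carries a fibrewise $H_{n,r}$-action. Taking the quotient by $H_{n,r}$ yields a family $\mathcal X/H_{n,r}\to\mathbb A^1$ whose general fibre is $\Gr(n,r)/H_{n,r}$ and, because $|H_{n,r}|$ is invertible so that invariants commute with restriction to $t=0$, whose special fibre is $X_P/H_{n,r}$; flatness is preserved since $H_{n,r}$-invariants of a torsion-free $\CC[t]$-module remain torsion-free. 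By the previous two steps this special fibre is precisely the Batyrev--Borisov dual of $X_Q$, so the family is the desired smoothing. The points requiring care are the reflexivity of the Gelfand--Cetlin polytope and of $Q$ in $M_0$ (needed for the duals to make sense) and the verification that the geometric quotient is well behaved along the whole family, both of which I would settle before invoking the identification of Step~2.
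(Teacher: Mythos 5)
There is a genuine gap, and it sits exactly where you predicted the main obstacle would be: Step 2 is false. The group $G=N_0/N$ arising from primitive duality is \emph{not} isomorphic to $H_{n,r}$ in general. By Lemma \ref{lem:Gdes} applied to the Gelfand--Cetlin polytope, $G\cong(\ZZ/n\ZZ)^{r(n-r)-1}$, of order $n^{r(n-r)-1}$, whereas $|H_{n,r}|=n^{n-2}\gcd(n,r)$; these agree only when $r=1$ or $r=n-1$. For $\Gr(5,2)$, for instance, $|G|=5^{5}$ while $|H_{5,2}|=5^{3}$. Your guiding check $r=1$ is precisely the degenerate case that hides the discrepancy. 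Consequently Step 3 collapses: quotienting the Gonciulea--Lakshmibai family fibrewise by $H_{n,r}$ produces a family whose special fibre is $X_P/H_{n,r}$, which is a proper intermediate quotient and not the Batyrev--Borisov dual $X_P/G$ of $X_Q$. Nor can you repair this by quotienting the family by $G$ instead, because $G$ does not preserve the general fibre: the paper shows (Definition of $G_h$ and Theorem \ref{thm:groupiso}) that the maximal subgroup of $G$ leaving the Pl\"ucker relations homogeneous is exactly $G_h\cong H_{n,r}$, a proper subgroup, so the honest Pl\"ucker relations are destroyed by the rest of $G$ and there is no $G$-action on the degenerating family that fixes each fibre.

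The paper's resolution of this tension is the actual content of Theorem \ref{thm:Gsmooth}, and it is missing from your proposal. One extends the $G$-action to a multi-parameter version of the degeneration $Z\subset\PP^{\binom{n}{r}-1}\times\CC^m$ by letting $G$ act nontrivially on the coefficients $c_i$ of the terms that vanish in the degeneration (together with auxiliary equations $\mathcal{P}_1$, $\mathcal{P}_2$ relating the $c_i$), so that the total space, but not each individual fibre, is $G$-invariant. The special fibre $X_{P_{n,r}}$ is fixed and its quotient is $X_{P_{n,r}}/G$, as required; a generic fibre must be replaced by the disconnected, $G$-invariant union $\hat{Z}_{\underline{b}}$ of fibres over a $G$-orbit $\{\underline{c}: c_i^n=b_i\}$ in the base. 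The theorem then identifies $\hat{Z}_{\underline{b}}/G\cong\Gr(n,r)/H_{n,r}$ by a counting argument: the components are permuted by $G/G_h$ with trivial stabilizers, the equations $\mathcal{P}_1$ and $\mathcal{P}_2$ cut the number of components down to exactly $|G/G_h|=n^{(r-1)(n-r-1)-1}n/d$, so the residual action identifies all components and the quotient is $A_1/G_h\cong\Gr(n,r)/H_{n,r}$. In short, your lattice bookkeeping in Step 1 agrees with the paper, but the smoothing is not obtained by a fibrewise quotient of the classical degeneration; it requires the characterization of $H_{n,r}$ as $G_h\subsetneq G$ and the orbit-of-fibres construction above.
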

The geometry relating $X_P$ and $X_Q$ can be further described.
\begin{thm}\label{thm:intro2}[Theorem \ref{thm:vgit}] There is a toric blow-up of $X_P$ that can be obtained from $X_Q$ by a toric variation of GIT. The reverse holds for their Batyrev--Borisov duals: there is a toric blow-up of $X_{Q^\vee}$ that can be obtained from $X_{P^\vee}$ by a toric variation of GIT.
\end{thm}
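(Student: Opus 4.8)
The plan is to realize both assertions inside a single variation-of-GIT picture and to read off the blow-ups from the combinatorics of the secondary fan. Recall that a $\QQ$-factorial projective toric variety with ray set $R\subset N$ is a GIT quotient $\CC^{R}/\!/_\chi G$ of affine space by the torus $G=\Hom(\mathrm{Cl}(X),\CC^\times)$ dual to its divisor class group, with $\chi$ a stability character; as $\chi$ varies over the GKZ (secondary) fan in $\mathrm{Cl}(X)_\RR$ one obtains every simplicial fan supported on $R$, adjacent chambers differing by a flip and boundary wall-crossings contracting a divisor, i.e.\ performing a toric blow-down. Thus, to prove that a blow-up $\tilde X_P$ of $X_P$ is obtained from $X_Q$ by a toric variation of GIT, it suffices to produce one torus $G$ acting on one affine space $\CC^{R}$ together with two chambers of its secondary fan whose quotients are $X_Q$ and a variety $\tilde X_P$ mapping to $X_P$ by the contraction dual to the extra rays.

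First I would write the two fans explicitly. By definition $X_P$ is the toric variety of the spanning fan of the Gelfand--Cetlin polytope $P$, so its rays are the primitive generators of the vertices of $P$; since the spanning fan of $P^\vee$ is the normal fan of $P$, the primitive dual $X_Q$ has rays indexed by the facets of $P$, taken primitive in the finer lattice $N'\subseteq N$ spanned by the vertices of $P^\vee$. Using the polar pairing together with the refinement $N'\subseteq N$, I would place both ray collections in the common lattice $N'$ and form $R=\{\text{rays of }X_P\}\cup\{\text{rays of }X_Q\}$. The spanning fan of $P$ is then refined by stellar subdivision along the rays of $R$ coming from $X_Q$, producing a toric blow-up $\tilde X_P\to X_P$, while the opposite region of the secondary fan of $R$ contracts exactly those divisors and recovers $X_Q$.

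The heart of the proof is to check that $X_Q$ and $\tilde X_P$ genuinely sit in the secondary fan of $R$ and can be joined by a VGIT path. Concretely, I would exhibit a chamber of $\mathrm{Cl}(X)_\RR$ whose quotient is $X_Q$ and a wall-crossing path leading to a chamber whose quotient is $\tilde X_P$, and verify that the net effect of the crossings is precisely the blow-down $\tilde X_P\to X_P$. This is where the detailed face structure of the Gelfand--Cetlin polytope enters: one must match the vertices of $P$ with the appropriate subset of $R$, identify which Cox variables are switched off on the $X_Q$ side, and confirm that the intermediate models along the path remain projective and $\QQ$-factorial so that the path stays inside the support of the secondary fan. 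I expect this matching --- showing that the two ray configurations are regular and adjacent in the required way, with no obstructing non-projective models in between --- to be the main obstacle, and the place where the special combinatorics of $P$ (rather than that of a general reflexive polytope) is essential.

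Finally, for the Batyrev--Borisov duals I would dualize the entire construction. Polar duality sends the spanning fan of $P$ to that of $P^\vee$ and reverses inclusions of fans, so the stellar subdivision that blew up $X_P$ toward $X_Q$ becomes, on the dual side, a stellar subdivision that blows up $X_{Q^\vee}$ toward $X_{P^\vee}$; correspondingly the Gale/polar dual of the combined data $R$ carries the same wall structure with the direction of the VGIT reversed. Running the argument of the previous paragraph on this dual data then yields a toric blow-up $\tilde X_{Q^\vee}$ of $X_{Q^\vee}$ obtained from $X_{P^\vee}$ by a toric variation of GIT, which is the asserted reversed statement.
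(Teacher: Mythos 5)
Your general mechanism --- realize both varieties as GIT quotients of a common $\CC^{R}$ by the same torus, differing only in the stability chamber, so that agreement of ray sets immediately yields a toric VGIT --- is exactly the mechanism of the paper's Theorem \ref{thm:vgit}. But there is a genuine gap at the step where you form $R=\{\text{rays of }X_P\}\cup\{\text{rays of }X_Q\}$ ``using the polar pairing together with the refinement''. The rays of $X_P$ are the vertices of $P$, which live in the ambient lattice of $P$; the rays of $X_Q$ are the vertices of $Q$, which live in the sublattice of the \emph{dual} lattice spanned by the vertices of $P^\vee$ (a finite-index sublattice, hence coarser, not finer as you write). These two ray collections therefore sit in dual lattices, and the polar pairing is a bilinear form, not an identification: there is no canonical way to place both collections in one lattice, so your set $R$ is not defined. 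Nothing about a general reflexive polytope makes the vertices of $P$ appear among the vertices of $Q$; that this happens here is a special feature of the Gelfand--Cetlin polytope and is the actual content of the theorem.

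The missing ingredient is the paper's Theorem \ref{thm:forgetvertices}: there is a (nontrivial) lattice-polytope isomorphism between $P_{n,r}$ and $\overline{Q}_{n,r}$, the convex hull of the subset $\{v_\lambda:\lambda\in\Anr\}$ of vertices of $Q_{n,r}$ indexed by arrow partitions. It is proved by matching the weight matrices of the two toric varieties through the ladder-quiver combinatorics (Lemma \ref{lem:cd} and Claim \ref{claim:crossing}), and only after this identification is the vertex set of $P_{n,r}$ literally a subset of that of $Q_{n,r}$ in a common lattice, so that $Y_{n,r}$ is the iterated stellar subdivision at the excess rays and the same-rays/different-chamber argument applies. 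You correctly flag this ``matching'' as the main obstacle, but the mechanism you propose for it cannot work, and no substitute argument is given. Two smaller points: once the rays agree, no wall-crossing path analysis or projectivity check of intermediate models is needed --- having the same Cox data with different stability conditions already \emph{is} the asserted VGIT, and the $K$-equivalence comes for free because the anticanonical polytope depends only on the rays; and the statement for the Batyrev--Borisov duals is not obtained in the paper by polar duality ``reversing'' subdivisions, but by observing that $X_{P^\vee}=X_Q/G$, $X_{Q^\vee}=X_P/G$ and $Y_{n,r}/G$ have the same fans as $X_Q$, $X_P$ and $Y_{n,r}$ viewed in the finer lattice, so the blow-up and same-rays relations descend verbatim to the quotients.
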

The first proposals of mirror symmetry for Grassmannians arise in physics: Hori--Vafa \cite{horivafa} and Eguchi--Hori--Xiong \cite{eguchi}. Hori--Vafa's proposal is rooted in the Abelian/non-Abelian correspondence (later further developed in \cite{CiocanFontanineKimSabbah2008,gusharpe}), while the Eguchi--Hori--Xiong construction can be seen to arise from the Gelfand--Cetlin toric degeneration \cite{conifold, flagdegenerations}. Another more recent proposal is the Marsh--Rietsch Pl\U cker coordinate mirror \cite{MarshRietsch}. All of these proposals focus on a Landau--Ginzburg model, or a superpotential, for the Grassmannian, but do not produce mirror partners for Calabi--Yau hypersurfaces in Grassmannians. However, it is expected that a compactification of the fibers of the superpotential provides a mirror of the Calabi--Yau.  In this paper, we continue this story by exploring mirror symmetry for the Fermat Calabi--Yau hypersurfaces in Grassmannians. 

The two theorems above suggest a possible  mirror to the Fermat Calabi--Yau hypersurface of the Grassmannian, which we describe in \eqref{eq:mirrorproposal} after establishing more notation.  
\begin{thm}\label{thm:intro3}[Theorem \ref{thm:compactification}] The Eguchi--Hori--Xiong Laurent polynomial mirror to the Grassmannian compactifies to an anticanonical hypersurface cut out by \eqref{eq:mirrorproposal} in a blow-up of the Grassmannian  modulo the group $H_{n,r}$. 
\end{thm}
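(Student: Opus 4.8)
\emph{Proof proposal.} The plan is to exhibit the Eguchi--Hori--Xiong Laurent polynomial $W$ as the restriction to a dense torus of an explicit anticanonical section on a blow-up $Y$ of $\Gr(n,r)/H_{n,r}$, and then to identify the closure of a general fibre $\{W=\lambda\}$ with the anticanonical hypersurface that this section cuts out. First I would use the identification of the EHX construction with the Gelfand--Cetlin degeneration to recognise $W$ as the standard Laurent polynomial mirror attached to $X_P$: its Newton polytope is $P$, and its coefficients are fixed by the toric boundary divisors of $X_P$. It follows that a general fibre $\{W=\lambda\}$ is an affine Calabi--Yau whose canonical toric compactification is an anticanonical hypersurface in the toric variety dual to its Newton polytope, namely $X_{P^\vee}$.

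Next I would transport this compactification off the toric model $X_{P^\vee}$ using Theorem \ref{thm:vgit}. That result exhibits a toric blow-up of $X_{Q^\vee}$ as a variation of GIT of $X_{P^\vee}$; since a variation of GIT is an isomorphism over the common big torus, the fibre $\{W=\lambda\}$ is carried across unchanged. Compactifying it inside this blow-up of $X_{Q^\vee}$ therefore still restricts to $\{W=\lambda\}$ on the torus and closes it up to an anticanonical divisor. This replaces the ambient $X_{P^\vee}$ by a blow-up of the Batyrev--Borisov dual of $X_Q$, which is precisely the toric variety that Theorem \ref{thm:Gsmooth} ties to the Grassmannian.

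I would then run the smoothing of Theorem \ref{thm:Gsmooth} in family. That theorem presents $\Gr(n,r)/H_{n,r}$ as a smoothing of the Batyrev--Borisov dual of $X_Q$; blowing up the total space of this smoothing along a locus specialising to the centre of Theorem \ref{thm:vgit} yields a flat family whose special fibre is the blow-up of $X_{Q^\vee}$ and whose general fibre is the desired blow-up $Y$ of $\Gr(n,r)/H_{n,r}$. Because $h^0(-K)$ is constant along this family, the anticanonical section cutting out the compactified fibre of $W$ on the special (toric) fibre deforms to an anticanonical section on $Y$; I would take \eqref{eq:mirrorproposal} to be this section, written in homogeneous coordinates on $Y$. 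Restricting \eqref{eq:mirrorproposal} to the torus orbit visible in the toric special fibre gives $W-\lambda$ by construction, and flatness propagates the identification to the general fibre, completing the argument.

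The hard part will be making this final matching step fully explicit: producing workable coordinates on $Y$, describing the anticanonical linear system there, and checking that the distinguished section \eqref{eq:mirrorproposal} restricts to exactly the EHX Laurent polynomial --- with the correct monomials and coefficients --- after both the variation of GIT and the smoothing. A secondary technical point is to verify that the blow-up of Theorem \ref{thm:vgit} and the smoothing of Theorem \ref{thm:Gsmooth} can be performed simultaneously in a single flat family, and that the deformed divisor stays anticanonical, acquiring neither base locus nor a jump in $h^0(-K)$; this is exactly where the Calabi--Yau/Fano numerics guaranteeing constancy of the anticanonical system are required.
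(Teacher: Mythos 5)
Your proposal is correct and follows essentially the same route as the paper: compactify the EHX fibre as a Batyrev anticanonical hypersurface in $X_{P_{n,r}^\vee}=X_{Q_{n,r}}/G$, carry it across the VGIT of Theorem \ref{thm:vgit} to $Y_{n,r}/G$, and then deform along the degeneration relating $Y_{n,r}/G$ to $\Bl\Gr(n,r)/H_{n,r}$. The only difference is one of execution: the paper performs your first step via an explicit monomial map written in Cox coordinates, and handles the final deformation by writing \eqref{eq:mirrorproposal} once and for all in the coordinates of the fixed ambient blow-up $\Bl\PP$ (so the same equation cuts out the hypersurface on every fibre of the degeneration), which sidesteps the constancy-of-$h^0(-K)$ argument that you flag as the hard part.
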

Both the Abelian/non-Abelian correspondence and toric degenerations essentially reduce the question of the mirror symmetry to the toric situation, where older results can be applied. One problem, though, is that it is unclear how to reverse this process on the mirror side - how can we leave the toric context? Our proposal suggests a solution to this problem.     This proposal requires the Gelfand--Cetlin toric degeneration, making extensive use of its combinatorial structure, but the final answer does not depend on the degeneration and it is an intriguing question as to how it compares to the results of Marsh--Rietsch \cite{MarshRietsch} and Rietsch--Williams \cite{RW}.  
\section*{Overview of results}
We now outline the results of the paper.  The first step is a close study of the Batyrev--Borisov mirror of the Gelfand--Cetlin toric degeneration of the Grassmannian. We begin by looking at Batyrev--Borisov mirror symmetry for Fano toric varieties with high Fano index.
\subsection*{High index Fano varieties, lattices, and Batyrev--Borisov mirror symmetry}
Let $P$ be a lattice polytope in a lattice $N$, and let $M$ be the dual lattice. As before, we denote the toric variety arising from the spanning fan $P$  as $X_P$. If $P$ is the polytope of projective space, i.e. if $X_{P}=\PP^n$, then $X_{P^\vee}$ is just $X_P/G$, where $G=(\ZZ/(n+1)\ZZ)^{n-1}$. The group action can be seen to arise from the fact that $\PP^n$ has Fano index $n+1$: the vertices of $P^\vee$ span a sub-lattice $i:\overline{M} \to M$ such that $M /\overline{M} \cong G$.  The fact that $X_{P^\vee}=\PP^{n}/G$ is a consequence of the fact that $i^{-1}(P^\vee) =P$. 

It is a simple corollary of the Batyrev--Borisov construction that if we take intermediate lattices we recover Greene--Plesser mirror symmetry for the Calabi--Yau hypersurface in projective space.  Consider a lattice inclusions $\overline{M} \subset M' \subset M$, and set $(M')^\vee=N'$ and $\overline{M}^\vee=\overline{N}$. Then the anticanonical hypersurface in $\PP^n/(N'/N)$ forms a mirror pair with the anticanonical hypersurface in $\PP^n/(M'/\overline{M}).$ 

We show in \S \ref{sec:lattices} that there is an analogous picture for any Fano toric variety of index $n$. Let $P$ be a reflexive Fano polytope of Fano index $k$. The vertices of $P^\vee$ span a sub-lattice $i:\overline{N} \to N$. Set $G:=N/\overline{N}$. In the projective space example above, and in many other cases, $ G \cong (\ZZ/k\ZZ)^{\dim X_P-1}$. Let $Q$ denote the polytope $i^{-1}(P)$. Then mirror symmetry reverses the roles of $P$ and $Q$:
 \begin{center}
\begin{tikzpicture}\small
\node at (0,0) (A) {$X_P$};
\node at (2,0) (A1) {$X_Q$};
\node at (2,-1) (B) {$X_P/G$};
\node at (0,-1) (B1) {$X_Q/G$};
\draw[<->](A)--(B1);
\draw[<->](A1)--(B);
\end{tikzpicture}
\end{center}
where the arrows are Batyrev--Borisov mirror symmetry. We call $Q$ the \emph{primitive dual} of $P$. For polytopes whose vertices generate the ambient lattice,this is a duality.  We also obtain a series of mirror pairs via considering intermediate lattices. 
\begin{prop}
Given lattice inclusions $\overline{N} \subset N' \subset N$,  dualizing to $M \subset M' \subset \overline{M}$, $X_P/(N/N')$ mirrors $X_Q/(M'/\overline{M})$. 
\end{prop}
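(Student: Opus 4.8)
The plan is to derive this from the Batyrev--Borisov duality picture already established for the extreme case $\overline{N} \subset N$, by inserting the intermediate lattice and tracking how the reflexive polytopes and the quotient groups change. Recall the setup: the vertices of $P^\vee$ span $\overline{N} \subset N$ with $G = N/\overline{N}$, and $Q = i^{-1}(P)$ is the primitive dual living in $\overline{N}$, so that $X_P$ mirrors $X_Q/G$ and $X_Q$ mirrors $X_P/G$. The content of the proposition is that the single mirror arrow $X_P \leftrightarrow X_Q/G$ refines into a one-parameter family of arrows $X_P/(N/N') \leftrightarrow X_Q/(M'/\overline{M})$ indexed by the chain $\overline{N} \subset N' \subset N$.

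First I would recall the precise mechanism by which taking a finite quotient of a toric variety corresponds, under Batyrev--Borisov, to a change of lattice. If $X_P$ is built from the spanning fan of $P$ in a lattice $N$, and $N' \subset N$ is a sublattice of finite index containing all the rays of the fan, then $X_{P}/(N/N')$ is the toric variety for the same fan read in the coarser lattice $N'$; dually this multiplies the character lattice, replacing $M$ by the overlattice $M'$ with $M \subset M' \subset \overline{M}$ (here $M' = (N')^\vee$). The key linear-algebra fact is that dualizing the chain $\overline{N} \subset N' \subset N$ reverses inclusions to give $M \subset M' \subset \overline{M}$, with $M'/\overline{M} \cong N'/\overline{N}$ and $N/N' \cong \overline{M}/M'$ by the standard perfect pairing between the successive quotients $N/N'$ and $M'/M$ induced from the finite abelian group duality $\widehat{N/\overline N} \cong \overline M/M$. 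I would state this duality of finite quotients cleanly as the first step.

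Next I would assemble the mirror statement. Batyrev--Borisov mirror symmetry pairs the anticanonical hypersurface in $X_\Delta$ with that in $X_{\Delta^\vee}$ whenever $\Delta,\Delta^\vee$ is a reflexive pair of lattice polytopes in dual lattices. The plan is to check that $P$ viewed in the intermediate lattice $N'$ and $Q$ viewed in the dual setup give a reflexive pair. Concretely, taking the quotient of $X_P$ by $N/N'$ corresponds on the polytope side to keeping $P$ but passing to the finer dual lattice $M'$; its polar dual in $M'$ is exactly $P^\vee$ re-read in $M'$, and the inverse image $i^{-1}$ of this under $\overline{N} \hookrightarrow N'$ recovers $Q$ modulo the residual quotient by $M'/\overline{M} \cong N'/\overline{N}$. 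So the mirror of $X_P/(N/N')$ is $X_Q$ quotiented only by the part of $G$ that survives after refining to $N'$, namely $M'/\overline{M}$, giving $X_Q/(M'/\overline{M})$. This is precisely the interpolation between the two endpoints: $N' = N$ recovers $X_P \leftrightarrow X_Q/G$ (since then $M' = \overline M$ makes the quotient on the right trivial, wait---$M'/\overline M$ is everything), and $N' = \overline N$ recovers $X_P/G \leftrightarrow X_Q$; I would verify both endpoints explicitly as a sanity check since they must agree with the already-established square.

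The main obstacle I anticipate is bookkeeping rather than conceptual: making sure that the finite quotient appearing on the mirror side is the correct one, i.e.\ that it is $M'/\overline{M}$ and not its dual $N/N'$, and that the polar dual and the lattice refinement commute in the way claimed. The subtle point is that quotienting a toric variety and dualizing its defining polytope both act on lattices but in opposite directions, so one must be careful that $i^{-1}$ (inverse image under the primitive-dual inclusion) interacts correctly with passing from $M$ to the overlattice $M'$; in particular one needs that $i^{-1}(P^\vee \text{ in } M') $ is still a lattice polytope in $\overline N$ and reflexive, which relies on the hypothesis that the vertices of $P^\vee$ generate $\overline N$ so that $Q = i^{-1}(P)$ is genuinely reflexive. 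I would handle this by working entirely at the level of the dual pair of chains and invoking the endpoint square of the established diagram as a consistency constraint that pins down the correct identifications.
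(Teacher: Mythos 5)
Your proof skeleton is the same as the paper's: pass to the intermediate lattice, recognise both sides as the toric varieties of $P$ and of $Q$ read in the intermediate pair of dual lattices, and apply Batyrev--Borisov to that reflexive pair. The problem is that the toric fact you build this on is stated in the wrong direction, and as stated it is false. If $\Sigma$ is a fan whose rays lie in a finite-index sublattice $N' \subset N$, then the toric variety of $\Sigma$ read in $N'$ is a \emph{cover} of the one read in $N$, not a quotient of it: the inclusion $N' \hookrightarrow N$ induces a toric morphism $X_{\Sigma,N'} \to X_{\Sigma,N}$ which on the dense tori is surjective with kernel $\Hom(M'/M,\CC^*) \cong N/N'$, so that $X_{\Sigma,N} \cong X_{\Sigma,N'}/(N/N')$. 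Thus quotienting by a finite subgroup of the torus corresponds to passing to an \emph{overlattice} of the fan lattice, equivalently to the \emph{sublattice} of invariant characters --- exactly the opposite of your claim that $X_P/(N/N')$ is ``the same fan read in the coarser lattice $N'$'' with character lattice enlarged from $M$ to $M'$. A quotient cannot have more functions than the original. The paper's proof (of the precise version in \S\ref{sec:lattices}, in whose notation $P \subset N$ and the vertices of $P^\vee$ span $\overline{M} \subset M$) runs in the correct direction: $P_1$ is the image of $P$ in the intermediate \emph{overlattice} $N_1 \supset N$, so $X_{P_1} = X_P/(N_1/N)$; the vertices of $P_1^\vee \subset M_1$ coincide with those of $Q$ under $\overline{M} \subset M_1$, so $X_{P_1^\vee} = X_Q/(M_1/\overline{M})$; and Batyrev--Borisov applied to the reflexive pair $(P_1,P_1^\vee)$ finishes the proof.

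The reversed mechanism is not a harmless slip; it contradicts your own setup. Since you take the vertices of $P^\vee$ to span $\overline{N} \subset N$, the polytope $P$ and its spanning fan live on the $M$-side, so the group that can act on $X_P$ is $M'/M$ (giving $X_P/(M'/M) = X_{P,M'}$), while the group acting on $X_Q$ (fan in $\overline{N}$) is $N'/\overline{N}$ (giving $X_{Q,N'}$); these are identified with the groups named in the statement only through finite-group duality, $N/N' \cong \widehat{M'/M}$ and $\overline{M}/M' \cong \widehat{N'/\overline{N}}$. Your claimed isomorphism $N/N' \cong \overline{M}/M'$ is false in general --- these groups have orders $[N:N']$ and $[N':\overline{N}]$, which already differ when $N' = N$ --- and placing the fan of $P$ in $N$ as in your mechanism paragraph would dualize $N' \subset N$ to the chain $\overline{M} \subset M \subset M'$, contradicting the hypothesis $M \subset M' \subset \overline{M}$. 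The same confusion surfaces in your endpoint check ($N' = N$ forces $M' = M$, not $M' = \overline{M}$). Once the direction of the quotient/lattice correspondence is corrected, the rest of your outline does go through and coincides with the paper's argument.
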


If $X_P$ is a sufficiently nice toric degeneration of the Grassmannian, then it is in particular a Fano variety of Fano index $n$. If $X_P$ is the Gelfand--Cetlin toric degeneration, then its vertices can be described via the \emph{ladder diagram} (\cite{flagdegenerations}, see \S \ref{sec:GC}). The primitive dual $Q$  can be described via the same diagram. We give a description of $Q$ in this case in \S \ref{sec:GC}.

The series of examples of mirror pairs for projective space includes the quintic twin~\cite{doran}, which intriguingly has the same Picard--Fuchs equations as the quintic. The perspective in this paper, allows us to generalize this `twin phenomenon' to toric degenerations of the Grassmannian. See \S\ref{sec:twin}.

 \subsection*{The Gelfand--Cetlin toric degeneration and VGIT}
 Let $P_{n,r}$ denote the reflexive Fano polytope associated to the Gelfand--Cetlin toric degeneration, and $Q_{n,r}$ its primitive dual. Let $G_{n,r}:=(\ZZ/n\ZZ)^{r(n-r)-1}.$ The polytopes $P_{n,r}$ and $Q_{n,r}$ are closely related via mirror symmetry, but in fact there is another relation. The main result of \S \ref{sec:gt} is Theorem \ref{thm:intro2}: there is a weak Fano toric variety $Y_{n,r}$ that fits into the diagram
 \begin{center}
\begin{tikzpicture}\small
\node at (0,0) (A) {$X_{P_{n,r}}$};
\node at (4,0) (B) {$X_{Q_{n,r}}$};
\node at (2,1) (C) {$Y_{n,r}$};
\draw[dashed, ->](C)--(B)  node[align=right,pos=0.25,right] {\tiny \: VGIT};
\draw[->](C)--(A)  node[pos=0.25,left] {\tiny blow-up \:};
\node at (2,0) (s) {};
\node at (2,-1) (t) {};
\draw[<->](s)--(t) node[align=left,midway,right]{
  \begin{minipage}{1.2cm}
    \tiny mirror symmetry
  \end{minipage}};
\node at (4,-1) (B1) {$X_{P_{n,r}}/G$};
\node at (0,-1) (A1) {$X_{Q_{n,r}}/G$};
\node at (2,-2) (C1) {$Y_{n,r}/G$};
\draw[->](C1)--(B1)  node[pos=0.1,right] {\tiny \: \: blow-up};
\draw[->,dashed](C1)--(A1)  node[align=left,pos=0.1,left] {\tiny VGIT \:\:};
 \end{tikzpicture}
 \end{center}
Note that under mirror symmetry, the roles of the two maps in the diagram are reversed, suggesting the relationship is analogous to that of geometric transitions. The variation of GIT is a toric variation of GIT. While it is not a crepant resolution, the VGIT  is a $K$-equivalence: it preserves the anticanonical class in the sense that the polyhedron associated to the anticanonical divisor is the same for each toric variety. 
\subsection*{Smoothing}
In the first subsection of \S \ref{sec:smoothing}, we prove Theorem \ref{thm:intro1} above, which we can now state more precisely. There is a natural embedding of $X_{P_{n,r}}$ into $\PP^{\binom{n}{r}-1}$ and it is in $\PP^{\binom{n}{r}-1}$ that the Gelfand--Cetlin toric degeneration is seen. The embedding allows $G$ to act not just on $X_{P_{n,r}}$ but on the whole projective space. The heart of the theorem is the remarkable characterization of $H_{n,r}$ as precisely the maximal subgroup of $G$ for which the Grassmannian is an equivariant subvariety (see Theorem \ref{thm:groupiso}). 

It is easy to construct a $G$-equivariant family $Z \subset \PP^{\binom{n}{r}-1} \times \CC^m$, with coefficients $c_i$ on the second factor, such that the special fiber is $X_{P_{n,r}}$ and the general fiber  is $\Gr(n,r)$.  This is just the Gelfand--Cetlin toric degeneration, where we extend the action of $G$ to the second factor in such a way that the varying Pl\U cker relations are $G$-invariant. To obtain a smoothing of $X_{P_{n,r}}/G$, we need to take the quotient of this family by $G$. Fibers of the quotient family are
\[(Z \cap \{c_i^n=b_i\})/G \]
for constants $b_i$. 
\begin{thm} The generic fiber of the quotient family is isomorphic to $\Gr(n,r)/H_{n,r}$.
\end{thm}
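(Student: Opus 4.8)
The plan is to strip the quotient down to its pieces: first understand $Z \cap \{c_i^n = b_i\}$ for generic $b$ as a $G$-set of copies of the Grassmannian, and then quotient using orbit--stabilizer bookkeeping. Fix $b=(b_i)$ generic with all $b_i \neq 0$. The locus $\{c_i^n = b_i\}$ is a finite set $S_b \subset \CC^m$ of $n^m$ points, naturally a torsor under $\mu_n^m$ (translate any fixed root vector by an $m$-tuple of $n$th roots of unity). Since $b$ is generic, each $c \in S_b$ is a generic point of the coefficient space, so the fiber $Z_c$ of $Z$ over $c$ is a copy of $\Gr(n,r)$; hence
\[
  Z \cap \{c_i^n = b_i\} \;=\; \coprod_{c \in S_b} Z_c \;\cong\; \coprod_{c \in S_b} \Gr(n,r),
\]
a disjoint union of $n^m$ Grassmannians.

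Next I would pin down the $G$-action. Since $G$ acts on $\PP^{\binom{n}{r}-1}$ through $n$th roots of unity on the Pl\U cker coordinates, it sits inside the abelian diagonal torus $T$ of $\PP^{\binom{n}{r}-1}$, as does $H_{n,r}$, and it acts on each coefficient $c_i$ through a character $\chi_i$ valued in $\mu_n$; the family is built precisely so that these weights make the varying Pl\U cker relations $G$-invariant, so in particular $b_i=c_i^n$ is $G$-invariant and $G$ preserves $S_b$. By $G$-equivariance of $Z$ an element $g$ carries $Z_c$ to $Z_{g\cdot c}$, so $g$ stabilizes $Z_c$ if and only if $g\cdot c = c$, i.e. $\chi_i(g)=1$ for all $i$. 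Thus every component has the same stabilizer $K:=\bigcap_i \ker\chi_i$. Now $g \in K$ means exactly that $g$ fixes the coefficients of the standard Pl\U cker relations, equivalently that $g$ maps $\Gr(n,r)$ to itself; by Theorem \ref{thm:groupiso} the maximal subgroup of $G$ preserving $\Gr(n,r)$ is $H_{n,r}$, so $K=H_{n,r}$. Moreover, for generic $c$ the fiber $Z_c$ is a $T$-translate of the standard Grassmannian, and since $T$ is abelian and contains $H_{n,r}$, the induced $H_{n,r}$-action on $Z_c$ is conjugate to the standard one, whence $Z_c/H_{n,r} \cong \Gr(n,r)/H_{n,r}$.

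It remains to count orbits. The $G$-action on the $\mu_n^m$-torsor $S_b$ factors through $\overline{G}:=G/K=G/H_{n,r}\hookrightarrow \mu_n^m$, so the number of components of the quotient is $n^m/|\overline{G}| = n^m\,|H_{n,r}|/|G|$. If $G$ acts transitively on $S_b$ -- equivalently, if the character map $(\chi_i)\colon G \to \mu_n^m$ is surjective, equivalently if $|G|=n^m\,|H_{n,r}|$ -- then the standard description of the quotient of a disjoint union under a transitive permutation of its pieces gives
\[
  \bigl(Z \cap \{c_i^n = b_i\}\bigr)/G \;\cong\; Z_c/\mathrm{Stab}_G(c) \;=\; Z_c/H_{n,r} \;\cong\; \Gr(n,r)/H_{n,r}
\]
for any $c \in S_b$, which is the assertion.

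I expect the main obstacle to be establishing this transitivity. The stabilizer computation is clean once one invokes the maximality characterization of $H_{n,r}$ in Theorem \ref{thm:groupiso} -- and that characterization is really the conceptual heart of the whole statement -- but surjectivity of $(\chi_i)\colon G \to \mu_n^m$ is not formal: it requires the explicit combinatorial model $G \cong (\ZZ/n\ZZ)^{r(n-r)-1}$ coming from the ladder diagram, together with the precise list of deformed Pl\U cker relations and the weights $\chi_i$ read off from their monomials. Concretely I would compute the $\chi_i$ from the leading and trailing monomials of each relation and check that they span $\widehat{\mu_n^m} \cong (\ZZ/n\ZZ)^m$; equivalently, that $G/H_{n,r}$ permutes the $n^m$ sheets of $S_b$ freely and transitively. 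This is exactly the step that guarantees the smoothing is connected: were $(\chi_i)$ not surjective, the generic fiber would break up into $n^m\,|H_{n,r}|/|G|$ disjoint copies of $\Gr(n,r)/H_{n,r}$ rather than a single one.
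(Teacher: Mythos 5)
Your skeleton -- decompose the fiber into components indexed by the finite point set, identify the common component stabilizer with $H_{n,r}$ via Theorem \ref{thm:groupiso}, then conclude by transitivity of the residual action -- is the same as the paper's, and your stabilizer computation is correct: the paper likewise observes that $G_h$ is exactly the subgroup acting trivially on the coefficients, so the $G/G_h$-action on components is free. The genuine gap is the step you defer to the end: surjectivity of the character map $(\chi_i)\colon G \to \mu_n^m$ is not an unpleasant computation waiting to be done, it is \emph{false} for the family as you set it up, with base all of $\CC^m$ and $S_b$ a full $\mu_n^m$-torsor of $n^m$ points. The image of $(\chi_i)$ has order $|G|/|G_h| = n^{r(n-r)-1}/(n^{n-2}d) = n^{(r-1)(n-r-1)-1}\,n/d$ where $d=\gcd(n,r)$, which is essentially always far smaller than $n^m$, since $m$ counts \emph{all} monomials killed by the degeneration. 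Already for $\Gr(5,2)$ (Example \ref{eg:gr52deg}): $m=5$, the image has order $25$, while $\mu_n^m$ has order $3125$; by your own formula the naive quotient breaks into $n^m|H_{n,r}|/|G| = 125$ copies of $\Gr(5,2)/H_{5,2}$. So the conditional you prove has a false hypothesis, and the argument cannot be completed in this form.

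The paper's resolution is not to prove surjectivity but to shrink the coefficient space before quotienting. The family $Z$ in Theorem \ref{thm:Gsmooth} lives over a base $B \subsetneq \CC^m$ cut out by two further sets of $G$-equivariant equations: $\mathcal{P}_1$ from \eqref{eq:poly1}, which forces every coefficient to be a fixed monomial in the $(r-1)(n-r-1)$ coefficients $d_v$ attached to internal vertices of the ladder quiver (this is the role of $c_1=c_2$, $c_4=c_5$, $c_3=c_1c_4$ in Example \ref{eg:gr52deg}), and $\mathcal{P}_2$ from \eqref{eq:poly2}, a single additional equation of the form $f^{n/d}=1$ needed when $\gcd(n,r)>1$. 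After this reduction, the set $\{c \in B : c_i^n = b_i\}$ has at most $n^{(r-1)(n-r-1)-1}\,n/d$ points, hence so does the set of components; since your (correct) freeness argument gives at least $|G/G_h| = n^{(r-1)(n-r-1)-1}\,n/d$ components, the two bounds coincide and transitivity follows by counting rather than by computing characters. Note that even if you had used the minimal coefficient set (one $d_v$ per internal vertex), your surjectivity would still fail by the factor $d=\gcd(n,r)$ -- that residual discrepancy is exactly what $\mathcal{P}_2$, and the lemma showing $(\prod_{(B,v)} f_{B,v})^{n/d}$ is $G$-invariant, are engineered to remove. So the missing idea is this modification of the family itself; without it the theorem you are asked to prove is not true of the naive family.
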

We call this a smoothing, as the only singularities arise from the finite group quotient.

We also describe a partial smoothing of $Y_{n,r}$. Using the characterization of $Y_{n,r}$ as the blow-up of the Gelfand--Cetlin degeneration, one can write down binomial equations cutting $Y_{n,r}$ out of an ambient space obtained by blowing-up $\PP^{\binom{n}{r}-1}$ multiple times. This description allows us to reverse the Gelfand--Cetlin degeneration to (partially) smooth to the proper transform of the Grassmannian in the blow-up. 
 

\subsection*{Our mirror proposal} 
Consider the equations 
\begin{equation}\label{eq:mirrorproposal} \sum_{\lambda \in \Anr} p_\lambda^n + \psi \prod_{p_\mu \text{ frozen}}p_\mu=0.\end{equation}
The product in the second term is over \emph{frozen} Pl\U cker coordinates which are indexed by partitions $\mu$ which are maximal rectangular partitions. For details and definitions, see \S \ref{sec:mirrorproposal}.  This  equation defines a family of  Calabi--Yau hypersurfaces in $\Bl \Gr(n,r)/{H}$. This is a candidate mirror to a Calabi--Yau hypersurface in $\Gr(n,r)$. In \S \ref{sec:mirrorproposal}, we give evidence towards this conjecture by showing Theorem \ref{thm:intro3} above. 

\section*{Plan of the paper}In \S \ref{sec:lattices}, we explain the special behaviour exhibited by high index Fano varieties under Batyrev--Borisov mirror symmetry and introduce primitive duality. In \S \ref{sec:GC}, we specialize to the Gelfand--Cetlin toric degeneration and describe the combinatorics of $P_{n,r}$ and $Q_{n,r}$. In \S \ref{sec:gt} we describe the geometry relating $X_{P_{n,r}}$ and $X_{Q_{n,r}}$. In \S \ref{sec:smoothing}, we describe  how to `unwind' the toric degenerations after mirror symmetry: we give smoothings for $Y_{n,r}/G$ and $X_{P_{n,r}}/G$. In \S \ref{sec:mirrorproposal}, we introduce a candidate mirror of the anticanonical Fermat Calabi--Yau in the Grassmannian.

\section{Lattices and Batyrev--Borisov mirror symmetry} \label{sec:lattices}
In this first section, we explore Batyrev--Borisov mirror symmetry for high index Fano toric varieties from the perspective of mirror symmetry.

\subsection{Background on toric varieties} 
\subsubsection{Reflexive polytopes} Let $P$ be a full-dimensional lattice polytope in a lattice $M$, containing the origin in its strict interior. Let $N=M^\vee$ be the dual lattice.  The dual of $P$ is
\[P^\vee:=\{v \in N_{\RR} \mid \langle v, w \rangle \geq -1,  \forall w \in P\}.\]
\begin{mydef} A polytope $P$ is reflexive if $P^\vee$ is also a lattice polytope. 
\end{mydef}
Vertices of reflexive polytopes are primitive lattice vectors. 
\begin{mydef} A lattice polytope $P \subset M_\RR$ is \emph{vertex-spanning} if the set of vertices of $P$ span the lattice $M$. 
\end{mydef}
Reflexive polytopes may or may not be vertex-spanning. For example among the 4319~three-dimensional reflexive polytopes, classified in~\cite{KreuzerSkarke98}, 4075 are vertex-spanning and 244 are not.

\subsubsection{Toric varieties from reflexive polytopes} 
Let $P$ be a reflexive polytope, and let $F$ be the spanning fan of $P$. Then a toric variety $X_P$ can be constructed in the usual way from $F$. We say that this is toric variety associated to $P$ (that is, our convention uses spanning rather than dual fans). 
\subsubsection{Toric varieties from weights}
In addition to polytopes and fans, toric varieties can also be constructed using GIT data. The GIT data for a toric variety is
\begin{enumerate}
\item A torus $K \cong (\CC^*)^r$,
\item Weights $D_1,\dots,D_m$ of $K$ defining an action $K$ on $\CC^m$,
\item A stability condition $\omega$, which is a co-character of $K$.
\end{enumerate}
The toric variety $X_\omega$ is then the GIT quotient $\CC^m/\!\!/_\omega K$. For a description of how to pass between the GIT description of a toric variety and the fan construction, see for example \cite[Section 4]{crepantconjecture}.

\subsubsection{Toric divisors} Let $F$ be a fan in a lattice $N$ and let $X$ denote the toric variety defined by $F$. A toric divisor $D$ on $X$ defines a polyhedron $P_D$ in $M_{\RR}$, where $M = N^\vee$; see for example \cite[Chapter 5]{CLS}. This association between toric divisors and polyhedra has the properties that:
\begin{itemize}
  \item the polyhedron for $n D$ is $n P_D$; and 
  \item if two divisors are linearly equivalent then their polyhedra are translates of each other.
\end{itemize}
By definition of duality, if $P$ is a reflexive polytope then $P_{-K_{X_P}}=P^\vee$, where ${-K}_{X_P}$ is the anticanonical divisor given by the sum of the toric divisors on $X_P$.

\subsection{High-index Fano varieties} Throughout this section,  we will fix a $d$-dimensional reflexive Fano polytope $P$ giving rise (via the spanning fan) to a Fano toric variety $X_P$. We assume that $X_P$ has Fano index $n$. Let $N$ denote the ambient lattice of $P$ and $M$ the dual lattice. 

Let $\overline{M}$ be the sublattice of $M$ spanned by the vertices of $P_{-K_{X_P}}$. Let $Q$ denote the lattice polytope in $\overline{M}$ given by the preimage of $P_{-K_{X_P}}$ under the inclusion $\overline{M} \subset M$. 
\begin{mydef}\label{def:primitivedual}The polytope $Q$ is the \emph{primitive dual} of $P$.
\end{mydef}
\begin{rem} The primitive dual of a reflexive polytope is a reflexive polytope. 
\end{rem}
\begin{eg} The polytope of projective space is self-dual under primitive duality. 
\end{eg}
The inclusion of lattices $\overline{M} \subset M$ induces an action of the finite group $M/\overline{M}$ on $X_Q$, and 
\[X_{P^\vee}=X_Q/(M/\overline{M}).\]
We can extend this description of Batyrev--Borisov mirror symmetry to a series of mirror pairs, one for each intermediate lattice.  The inclusion of lattices $\overline{M} \subset M$ defines an inclusion $N \subset \overline{M}^\vee$. We denote $\overline{N}:=\overline{M}^\vee$.    Any intermediate lattice $\overline{M} \subset M_1 \subset M$ defines an intermediate dual lattice 
 \[N \subset N_1:=M_1^\vee \subset \overline{N}.\] 
 \begin{prop} Fix a lattice  $\overline{M} \subset M_1 \subset M$, and let $N_1$ be the lattice dual to $M_1$. Then 
 \[X_P/(N_1/N) \text{ and } X_Q/(M_1/\overline{M})\]
are a Batyrev--Borisov mirror pair. 
 \end{prop}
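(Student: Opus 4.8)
The plan is to reduce this to the base case of the primitive dual construction, which the excerpt has already established: namely that $X_{P^\vee} = X_Q/(M/\overline{M})$ and, dually, $X_{Q^\vee} = X_P/(N/\overline{N})$ where $\overline{N} = \overline{M}^\vee$. The key mechanism throughout is that taking a finite quotient of a toric variety by a group of the form $L'/L$ (where $L \subset L'$ is a sublattice inclusion) corresponds to refining the ambient lattice, and that under Batyrev--Borisov mirror symmetry such a lattice refinement on one side dualizes to a lattice coarsening on the other. So the entire proof is really a bookkeeping argument tracking how intermediate lattices interact with polytope duality.

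First I would set up the toric-quotient dictionary precisely. If $X_P$ is the toric variety of the spanning fan of $P$ in the lattice $N$, and we have an intermediate lattice $N \subset N_1 \subset \overline{N}$, then the inclusion $N \hookrightarrow N_1$ of lattices (same fan cones) realizes $X_P/(N_1/N)$ as the toric variety for the same fan but now read in the finer lattice $N_1$. Concretely, the fan of $X_P/(N_1/N)$ lives in $N_1$, and its dual description is governed by the dual lattice $M_1 = N_1^\vee$, which sits as $\overline{M} \subset M_1 \subset M$. I would verify that the anticanonical polytope of $X_P/(N_1/N)$, computed in $M_1$, is exactly $P_{-K_{X_P}} \cap M_1$ viewed as a polytope in $M_{1,\RR} = M_\RR$ — this is immediate because passing to a finer lattice $N_1$ on the $N$-side corresponds to intersecting with the coarser dual lattice on the $M$-side, and the anticanonical polyhedron is defined by the same support function.

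Next I would do the symmetric computation for $X_Q/(M_1/\overline{M})$. Here $Q$ lives in $\overline{M}$ with dual lattice $\overline{N}$, and the quotient by $M_1/\overline{M}$ refines the lattice from $\overline{M}$ to $M_1$, dualizing to the coarsening $N_1 = M_1^\vee$ on the other side with $N \subset N_1 \subset \overline{N}$. Its anticanonical polytope, computed in $N_1$, is $Q_{-K} \cap N_1$. The crux is then to check that these two anticanonical polytopes are dual to each other as lattice polytopes in the mutually dual lattices $M_1$ and $N_1$ — which is exactly the statement that the two quotient varieties form a Batyrev--Borisov mirror pair. This follows by combining the base-case duality ($P_{-K_{X_P}} = P^\vee$ is dual to $Q$, and $Q_{-K}=Q^\vee$ corresponds to $P$) with the general fact that duality commutes with the compatible restriction of dual polytopes to a dual pair of intermediate lattices: if two reflexive polytopes are dual in $M,N$, then their lattice-refined/coarsened versions remain dual in any compatible $M_1, N_1$.

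The main obstacle I anticipate is the last duality-commutes-with-intermediate-lattices step. One must be careful that intersecting a reflexive polytope $P_{-K_{X_P}}$ with an intermediate lattice $M_1$ and dualizing gives the same polytope as first dualizing to $Q$ (in $\overline{M}$) and then coarsening appropriately to $N_1$. The subtlety is that lattice polytope duality is not simply intersection with a sublattice — one takes the dual in the smaller lattice and must confirm reflexivity is preserved and that no spurious lattice points or vertices appear. I would handle this by phrasing everything in terms of the support functions / the defining inequalities $\langle v, w\rangle \geq -1$, which are lattice-independent as real conditions, so that only the lattice of \emph{integral} points changes; the duality of the two quotient polytopes then reduces to the already-established duality of $P_{-K_{X_P}}$ and $Q$ together with the elementary fact that $(N_1)^\vee = M_1$ by construction.
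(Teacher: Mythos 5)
Your proposal is correct and follows essentially the same route as the paper: both identify the finite quotients $X_P/(N_1/N)$ and $X_Q/(M_1/\overline{M})$ with the toric varieties of $P$ and $Q$ read in the refined lattices $N_1$ and $M_1$, and then observe that polytope duality is defined by real inequalities, so the dual of $P$ in $M_1$ has exactly the vertices of $Q$ viewed inside $M_1$. The paper's proof is just a compressed version of your bookkeeping, stating directly that $X_{P_1}=X_P/(N_1/N)$ and that the vertices of $P_1^\vee$ coincide with those of $Q$ under $\overline{M}\subset M_1$.
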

\begin{proof}
 Let $P_1$ denote the image of $P$ in $N_1$. Then we can describe $X_{P_1}$ as $X_P/(N_1/N)$. The dual $P_1^\vee$ has ambient lattice $M_1$. The vertices of $P_1^\vee$ are identified with that of $Q$ under the inclusion $\overline{M} \subset M_1$, so $X_{P_1^\vee}$ can be constructed as $X_Q/(M_1/\overline{M})$. 
\end{proof}

In the following proposition, we show that primitive duality is a duality, under the assumption that $P$ is vertex-spanning. 
\begin{prop} Let $P$ be a reflexive, vertex-spanning polytope. Let $Q$ be the primitive dual of $P$. Then the primitive dual of $Q$ is $P$. 
\end{prop}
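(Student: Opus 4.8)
The plan is to show that the primitive dual construction, when applied to $Q$, returns $P$ exactly. The key is to unwind the definitions in the vertex-spanning case and observe that the lattice inclusion $\overline{M} \subset M$ becomes trivial. First I would recall that $Q$ lives in the lattice $\overline{M}$ spanned by the vertices of $P_{-K_{X_P}} = P^\vee$, and that $Q = i^{-1}(P^\vee)$ under $i\colon \overline{M} \hookrightarrow M$. To form the primitive dual of $Q$, I must pass to the lattice spanned by the vertices of $Q^\vee$, where now duality is taken inside $\overline{M}$ with respect to its own dual lattice $\overline{N} = \overline{M}^\vee$.

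The main structural observation to establish is that $Q^\vee = P$ as a polytope in $\overline{N} = \overline{M}^\vee$. Here is where vertex-spanning enters: since $P$ is vertex-spanning, its vertices span $N$, so $P = P_{-K_{X_P}}^\vee$ is vertex-spanning in $N$ in the appropriate sense, and dually the vertices of $P^\vee = P_{-K_{X_P}}$ span $\overline{M}$, which forces $\overline{M} \subset M$ to be an equality exactly when $P^\vee$ is itself vertex-spanning. I would therefore first check that $P$ being reflexive and vertex-spanning makes $\overline{M} = M$ (since the vertices of $P^\vee$ are primitive and, under vertex-spanning of $P$, generate the full dual lattice). Once $\overline{M} = M$, the inclusion $i$ is the identity, so $Q = P^\vee$ literally, and the primitive dual of $Q$ is the lattice polytope spanned-by-vertices preimage of $Q^\vee = (P^\vee)^\vee = P$, which returns $P$ since $P$ is already vertex-spanning.

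The cleaner way to organize this, which I would adopt to avoid circularity, is to argue symmetrically: I would show that for a vertex-spanning reflexive $P$, both $\overline{M} = M$ and $\overline{N} = N$ hold, so that primitive duality collapses to ordinary polar duality $P \mapsto P^\vee$, which is manifestly an involution on reflexive polytopes. Concretely, I would verify (i) the vertices of $P^\vee$ are primitive lattice points of $M$ (true for any reflexive polytope, as noted in the excerpt), and (ii) vertex-spanning of $P$ implies that these vertices in fact generate $M$ and not merely a finite-index sublattice. Step (ii) is the crux, and I expect it to require the combinatorial/duality fact that a reflexive polytope is vertex-spanning if and only if its dual's vertices span the dual lattice.

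The hard part will be establishing this equivalence in step (ii): that vertex-spanning of $P$ forces the vertices of $P^\vee$ to span $M$ rather than a proper sublattice. This is not automatic from reflexivity alone — the examples of the $244$ non-vertex-spanning three-dimensional reflexive polytopes cited earlier show that the sublattice can be proper. I would approach it by relating the index $[M : \overline{M}]$ to $[N : \overline{N}]$ via the duality pairing and the finite group $G = M/\overline{M}$, showing that the natural pairing between $M/\overline{M}$ and the analogous quotient on the $N$ side is perfect, so that one index being trivial (guaranteed by vertex-spanning of $P$, which says the $N$-side quotient vanishes) forces the other to be trivial as well. Once that duality of indices is in hand, the involution property follows formally.
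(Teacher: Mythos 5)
Your argument breaks at exactly the step you flag as the crux, step (ii): the claim that vertex-spanning of $P$ forces the vertices of $P^\vee$ to generate all of $M$, so that $\overline{M}=M$ and primitive duality collapses to ordinary polar duality. This claim is false, and the paper's own motivating example refutes it. Take $P$ to be the polytope of $\PP^2$, with vertices $(1,0)$, $(0,1)$, $(-1,-1)$: it is reflexive and vertex-spanning, yet the vertices of $P^\vee$ are $(2,-1)$, $(-1,2)$, $(-1,-1)$, which generate an index-$3$ sublattice $\overline{M}\subset M$; for the polytope of $\PP^n$ one gets $M/\overline{M}\cong(\ZZ/(n+1)\ZZ)^{n-1}$ (cf.\ Lemma \ref{lem:Gdes}). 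If your claim were true, primitive duality would coincide with polar duality on vertex-spanning reflexive polytopes, the group $G=M/\overline{M}$ would always be trivial, and Greene--Plesser mirror symmetry --- where the mirror of the Fermat hypersurface in $\PP^n$ lives in a \emph{nontrivial} finite quotient of $\PP^n$ --- would collapse; so would the whole $X_P$ versus $X_Q$ story of the paper. The same example kills the proposed repair: there is no perfect pairing between $N/\overline{N}$ and $M/\overline{M}$, since for $\PP^2$ the first group is trivial while the second is $\ZZ/3\ZZ$. The equivalence you hope for in step (ii), ``$P$ is vertex-spanning iff the vertices of $P^\vee$ span $M$,'' thus fails in the simplest cases: vertex-spanning is not a self-dual condition.

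The statement is nevertheless true, and the proof needs no identification $\overline{M}=M$; the asymmetry between the two sides is the whole point of the construction. Since $P^\vee$ is full-dimensional, $\overline{M}$ has full rank in $M$, so $\overline{M}_\RR=M_\RR$ and $Q$ is literally the same point set as $P^\vee$, regarded in the sublattice $\overline{M}$. Dualizing the inclusion $\overline{M}\subset M$ gives $N\subset\overline{N}:=\overline{M}^\vee$, and the polar dual of $Q$, computed in $\overline{N}_\RR=N_\RR$, is the same point set as $(P^\vee)^\vee=P$ (using reflexivity of $P$); that is, $Q^\vee$ is the image of $P$ under the inclusion $N\subset\overline{N}$. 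The primitive dual of $Q$ is, by definition, $Q^\vee$ viewed in the sublattice of $\overline{N}$ generated by its vertices. Those vertices are exactly the vertices of $P$, and here --- and only here --- the hypothesis enters: they generate $N$, so that sublattice is $N$ and the primitive dual of $Q$ is $P$ itself. This is the paper's short argument; it uses vertex-spanning once, on the $N$ side, and makes no claim at all about the index $[M:\overline{M}]$, which is typically greater than $1$.
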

\begin{proof} The dual of the reflexive polytope $Q$ is the image of $P$ under the inclusion of lattice $N \subset \overline{N}$. By assumption, $P$ is vertex-spanning, and so its vertices generate $N$. It follows that $P$ is the primitive dual of $Q$. 
\end{proof}

We can be more explicit given the following assumptions on $P$ and $X_P$:
\begin{enumerate}
\item There is a Cartier toric divisor $D_i$ for some ray $i$ such that $n D_i$ is linearly equivalent to $-K_{X_P}$.
\item Both $P$ and $P_{D_0}$ are vertex-spanning polytopes.
\end{enumerate}
The two polyhedra $P_{n D_i}$  and $P_{-K_{X_P}}$ are translates of each other. Suppose that 
 \[P_{n D_i} +h=P_{-K_{X_P}}\]
 for some lattice element $h \in M$.  
 
 \begin{lem}\label{lem:Gdes} The lattice $\overline{M}$ is generated by $h$ and the generators of $n M$, and
 \[M/\overline{M} \cong (\ZZ/n\ZZ)^{d-1}. \]
 \end{lem}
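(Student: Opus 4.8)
The plan is to understand the lattice $\overline{M}$ concretely by comparing the two anticanonical polyhedra. Recall that $-K_{X_P} = \sum_j D_j$ and that by hypothesis $n D_i \sim -K_{X_P}$ for some fixed Cartier ray $i$, so $P_{n D_i}$ and $P_{-K_{X_P}} = P^\vee$ are translates by the lattice vector $h$, i.e. $P^\vee = P_{n D_i} + h$. The key observation is that the polyhedron $P_{D_i}$ of a single toric divisor has as its vertices the lattice points $m$ satisfying $\langle m, u_j\rangle \geq -\delta_{ij}$ for all rays $u_j$; these vertices lie in $M$, and by assumption (2) they span $M$. Scaling by $n$ shows $P_{n D_i} = n P_{D_i}$, so its vertices span $n M$. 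Translating by $h$ then tells us that the vertices of $P^\vee = P_{-K_{X_P}}$ are exactly $\{n v + h : v \text{ a vertex of } P_{D_i}\}$, and hence the sublattice $\overline{M}$ they generate is precisely the lattice generated by $h$ together with $n M$.

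First I would make this precise: since the vertices of $P_{D_i}$ generate $M$, and $\overline{M}$ is by definition generated by the differences and sums of vertices $n v + h$, I can subtract two such vertices to recover $n(v - v') \in \overline M$ for all vertex pairs, which generates $n M$; and a single vertex $n v_0 + h$ then shows $h \in \overline M$ as well (after subtracting the appropriate element of $nM$). Conversely $h$ and $nM$ visibly lie in $\overline M$, giving the asserted generation. This establishes the first claim $\overline M = \langle h, nM\rangle$.

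For the second claim I would compute the quotient $M/\overline{M} = M/\langle h, n M\rangle$. Since $\overline M \supseteq nM$, this quotient is a subquotient of $M/nM \cong (\ZZ/n\ZZ)^d$, namely $(M/nM)$ modulo the image $\bar h$ of $h$. So the task reduces to showing $\bar h$ has order exactly $n$ in $(\ZZ/n\ZZ)^d$, i.e. that $h$ is a \emph{primitive} vector of $M$ not lying in any proper multiple — equivalently that $\bar h$ generates a $\ZZ/n\ZZ$ summand. The Fano-index hypothesis enters here: $h$ is essentially the pairing-functional difference recording that $-K_{X_P} = n D_i$, and its primitivity (giving $\bar h$ order $n$ and splitting off a cyclic factor of size $n$) yields $M/\overline M \cong (\ZZ/n\ZZ)^{d}/\langle \bar h\rangle \cong (\ZZ/n\ZZ)^{d-1}$.

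The main obstacle I anticipate is precisely this last point: verifying that $h$ is primitive and that $\langle \bar h \rangle$ is a direct $\ZZ/n\ZZ$ summand of $(\ZZ/n\ZZ)^d$, so that the quotient is free of rank $d-1$ over $\ZZ/n\ZZ$ rather than some smaller or more twisted abelian group. This requires extracting from the Fano-index-$n$ condition (together with assumption (1) that $D_i$ is Cartier with $n D_i \sim -K$) that $h$ represents the full index and is not divisible, which is where the interplay between the index, the Cartier condition, and the vertex-spanning hypothesis (2) must be used carefully. Once that primitivity is in hand, the structure-theorem computation of the quotient is routine.
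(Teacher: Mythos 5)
Your overall strategy coincides with the paper's: identify $\overline{M}=\ZZ h+nM$ by comparing the vertices of $P^\vee=P_{nD_i}+h$ with those of $nP_{D_i}$, and then reduce the quotient computation to the primitivity of $h$ in $M$. However, the proposal has a genuine gap at exactly the step you yourself flag as ``the main obstacle'': you never prove that $h$ is primitive, offering only the hope that the Fano-index and Cartier hypotheses can be ``used carefully'' to extract it. Since everything else in your argument (the generation statement, and the structure-theorem computation of $M/(\ZZ h+nM)$ \emph{assuming} primitivity) is routine, the unproved step is the entire mathematical content of the second claim, and as written the proof is incomplete.

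The missing idea is short, and it is how the paper closes the argument: the origin is a vertex of $P_{D_i}$ (all inequalities $\langle\,\cdot\,,u_j\rangle\geq 0$, $j\neq i$, are active there, and those rays span $M_\RR$ because the spanning fan is complete), hence $h=n\cdot 0+h$ is a \emph{vertex} of $P^\vee=nP_{D_i}+h$. Since $P$ is reflexive, $P^\vee$ is a reflexive lattice polytope, and vertices of reflexive polytopes are primitive lattice vectors --- a fact recorded earlier in the paper. Primitivity lets you extend $h$ to a basis of $M$, giving $\overline{M}=\ZZ h\oplus n\ZZ f_2\oplus\cdots\oplus n\ZZ f_d$ and thus $M/\overline{M}\cong(\ZZ/n\ZZ)^{d-1}$. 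Note that the same observation repairs a small slip in your first paragraph: differences of a generating set of vertices need not generate the same lattice (the set $\{e_1,e_2\}$ generates $\ZZ^2$, but the differences generate only $\ZZ(e_1-e_2)$), so your claim that the differences $n(v-v')$ generate $nM$ is not automatic; it holds here precisely because $0$ is itself a vertex of $P_{D_i}$, so the differences $n(v-0)$ already exhaust $nM$.
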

 \begin{proof}
 As noted, $P^\vee=P_{-K_{X_P}}$. Fix a basis $e_1,\dots,e_d$ of $M$.  The vertices of $P_{n D_0}$ span the lattice given by the basis $n e_j$. It follows the lattice spanned by the vertices of $P^\vee$, that is the lattice $\overline{M}$, is the lattice spanned by $n e_1,\dots,n e_d,h$. Note that as $0$ is a vertex of $P_{D_i}$, $h$ is a vertex of the reflexive polytope $P^\vee$ and hence is a primitive lattice vector. It follows that
 \[M/\overline{M} \cong (\ZZ/n\ZZ)^{d-1}. \]
 \end{proof}
 \section{The Gelfand--Cetlin toric degeneration} \label{sec:GC}
 The goal of this section is recall the construction of the best-known toric degeneration of the Grassmannian, first studied by Gonciulea--Lakshmibai~\cite{GL}. This is a SAGBI basis degeneration of the Grassmannian inside its Pl\U cker embedding (see \cite{combinatorial} for a good exposition). The polytope giving the degenerate toric variety is a Gelfand--Cetlin polytope, so we will call this toric degeneration the Gelfand--Cetlin toric degeneration.  We begin by recalling the details of the construction, including a description of both the polytope and the weight matrix of the toric variety $X_{P_{n,r}}$ that forms the central fiber. We then describe the primitive dual to the polytope $P_{n,r}$, and its weight matrix. 
 
 \subsection{Ladder diagrams}
 The polytope $P_{n,r}$ can be described via a \emph{ladder diagram}. 
 Fix integers $n>r>0$, and set $k:=n-r$. Draw an $r \times k$ grid of boxes. For example, the grid for $n=5$,~$r=2$ is:
 \[\begin{tikzpicture}[scale=0.6]
\draw (0,0) rectangle (1,1);
\draw (0,1) rectangle (1,2);
\draw (1,1) rectangle (2,2);

\draw (1,0) rectangle (2,1);
\draw (2,1) rectangle (3,2);

\draw (2,0) rectangle (3,1);

\end{tikzpicture}\]
Draw vertices on the diagram as follows: at all internal crossing points, and at the bottom left and top right corner. Continuing this example, we obtain:

 \[\begin{tikzpicture}[scale=0.6]
\draw (0,0) rectangle (1,1);
\draw (0,1) rectangle (1,2);
\draw (1,1) rectangle (2,2);

\draw (1,0) rectangle (2,1);
\draw (2,1) rectangle (3,2);

\draw (2,0) rectangle (3,1);
\draw[fill] (0,0) circle (2pt);
\draw[fill] (1,1) circle (2pt);
\draw[fill] (2,1) circle (2pt);
\draw[fill] (3,2) circle (2pt);

\end{tikzpicture}\]
We consider paths in this diagram between vertices, where travel is always up and to the right. This describes a quiver: vertices are given by the vertices in the ladder diagram, and there is an arrow from vertex $a$ to $b$ for every path in the ladder diagram which does not pass through any other vertex. In the example, the quiver is
\[
\begin{tikzpicture}[scale=0.6]
\node at (0,0) (A) {};
\node at (1,1) (B) {};
\node at (2,1) (C) {};
\node at (3,2) (D) {};
\draw[fill] (0,0) circle (2pt);
\draw[fill] (1,1) circle (2pt);
\draw[fill] (2,1) circle (2pt);
\draw[fill] (3,2) circle (2pt);
\draw[->] (A) to[bend right] (B);
\draw[->] (A) to[bend left] (B);
\draw[->] (A) to[bend right=80] (D);
\draw[->] (A) to[bend left=80] (D);
\draw[->] (B) to (C);
\draw[->] (A) to[bend right] (C);
\draw[->] (C) to[bend right] (D);
\draw[->] (C) to[bend left] (D);
\draw[->] (B) to[bend left] (D);
\end{tikzpicture}\]

\begin{mydef} We call this quiver the \emph{ladder quiver} $\LQ$, or $\LQ(n,r)$ if $n$ and $r$ are not clear from context. The set of vertices of the ladder quiver is denoted by $\LQ_0$ and the set of arrows by $\LQ_1$. We denote the source and target maps by $s$,~$t:\LQ_1 \to \LQ_0$.  Let $0 \in \LQ_0$ be the source vertex.  Let $\overline{\LQ}_0$ denote the set of vertices with the source vertex omitted. 
\end{mydef}

\begin{rem}
  We henceforth draw the ladder quiver as a ladder diagram, leaving the directions of the arrows implicit. 
\end{rem}

A quiver moduli space is defined by assigning a vector space to each vertex of a quiver, and choosing a stability condition~\cite{King1994}. If we assign a one-dimensional vector space to each vertex of the ladder quiver and choose a Fano stability condition, then we obtain a quiver moduli space that coincides with $X_{P_{n,r}}$: see \cite{kalashnikov2} for details.  

The weight matrix of $X_{P_{n,r}}$ is given by the adjacency matrix of the ladder quiver. More precisely, let $\ZZ^{\overline{\LQ}_0}$ be the lattice with basis $e_v, v \in \overline{\LQ}_0$. For the source vertex $0 \in \LQ_0$, set $e_0=0$. Then the weights are given by $d_a:=-e_{s(a)}+e_{t(a)}$. 

It is often more convenient to write the weights in a different basis. We identify a basis element $f_v$ for each $v \in \overline{LQ}_0$, as follows:
\begin{enumerate}
\item If all arrows with $t(a)=v$ have $s(a)=0$, then we set $f_v:=d_a=e_v$. 
\item If there is a unique arrow such that both $t(a)=v$ and $s(a) \neq 0$, then we set $f_v:=d_a$. 
\item Otherwise, $v$ is the top right corner of a square of vertices. Label the other vertices of the square as shown:
\begin{center}
\begin{tikzpicture}
\node at (0,0) (A) {$v_0$};
\node at (1,0) (B) {$v_1$};
\node at (0,1) (C) {$v_2$};
\node at (1,1) (D) {$v$};
\draw[->](A)--(B);
\draw[->](A)--(C);
\draw[->](B)--(D);
\draw[->](C)--(D);
 \end{tikzpicture}
 \end{center}
then set 
\[f_v:=e_{v}-e_{v_1}-e_{v_2}+e_{v_0}.\]
That is, $f_v$ is a sum $\sum d_a$ where $a$ runs through the arrows drawn in the diagram:
\begin{center}
\begin{tikzpicture}
\node at (0,0) (A) {$v_0$};
\node at (1,0) (B) {$v_1$};
\node at (0,1) (C) {$v_2$};
\node at (1,1) (D) {$v$};
\draw[->](A)--(D);
\draw[<-](A)--(C);
\draw[<-](A)--(B);
 \end{tikzpicture}
 \end{center}
 \end{enumerate}
 In the basis $f_v$,~$v \in \overline{LQ}_0$, the weight matrix 
 \begin{equation}\label{eq:weightmatrix} B:=[b_{v a}]
 \end{equation}
 has rows indexed by $v \in \overline{\LQ}_0$, columns indexed by $a \in \LQ_1$, and all entries are either $1$ or $0$. There are precisely $n$ positive entries in each row. It is easy to identify visually the  set 
 \[\{a \in \LQ_1: b_{v a}=1\}\]
 for each $v$. We describe it by way of example.
 \begin{eg}\label{eg:1set} Let $n=7$ and $r=3$. For each non-source vertex $v$, labeled in red, the arrows satisfying $b_{v a}=1$ are the $n$ arrows fully contained in the red part of the ladder quiver. 
 \[\begin{tikzpicture}[scale=0.6]
\draw (0,0) rectangle (1,1);
\draw (0,1) rectangle (1,2);
\draw (0,2) rectangle (1,3);
\draw[] (1,1) rectangle (2,2);
\draw[] (1,0) rectangle (2,1);
\draw[] (1,2) rectangle (2,3);
\draw[] (2,1) rectangle (3,2);
\draw[] (2,2) rectangle (3,3);
\draw[] (2,0) rectangle (3,1);
\draw[] (3,0) rectangle (4,1);
\draw[] (3,1) rectangle (4,2);
\draw[] (3,2) rectangle (4,3);
\draw[fill] (0,0) circle (2pt);
\draw[fill,red] (1,1) circle (3pt);
\draw[fill] (2,1) circle (2pt);
\draw[fill] (3,1) circle (2pt);
\draw[fill] (1,2) circle (2pt);
\draw[fill] (2,2) circle (2pt);
\draw[fill] (3,2) circle (2pt);
\draw[fill] (4,3) circle (2pt);
\draw[red,thick] (0,0)--(0,1)--(1,1);
\draw[red,thick] (0,0)--(1,0)--(1,1);
\draw[red,thick] (0,0)--(0,2)--(1,2);
\draw[red,thick] (0,0)--(0,3)--(4,3);
\draw[red,thick] (0,0)--(2,0)--(2,1);
\draw[red,thick] (0,0)--(3,0)--(3,1);
\draw[red,thick] (0,0)--(4,0)--(4,3);
\end{tikzpicture}
\hspace{5mm}
\begin{tikzpicture}[scale=0.6]
\draw (0,0) rectangle (1,1);
\draw (0,1) rectangle (1,2);
\draw (0,2) rectangle (1,3);
\draw[] (1,1) rectangle (2,2);
\draw[] (1,0) rectangle (2,1);
\draw[] (1,2) rectangle (2,3);
\draw[] (2,1) rectangle (3,2);
\draw[] (2,2) rectangle (3,3);
\draw[] (2,0) rectangle (3,1);
\draw[] (3,0) rectangle (4,1);
\draw[] (3,1) rectangle (4,2);
\draw[] (3,2) rectangle (4,3);
\draw[fill] (0,0) circle (2pt);
\draw[fill] (1,1) circle (2pt);
\draw[fill,red] (2,1) circle (3pt);
\draw[fill] (3,1) circle (2pt);
\draw[fill] (1,2) circle (2pt);
\draw[fill] (2,2) circle (2pt);
\draw[fill] (3,2) circle (2pt);
\draw[fill] (4,3) circle (2pt);
\draw[red,thick] (1,1)--(2,1);
\draw[red,thick] (1,2)--(2,2);
\draw[red,thick] (1,2)--(1,3)--(4,3);
\draw[red,thick] (0,0)--(2,0)--(2,1);
\draw[red,thick] (0,0)--(3,0)--(3,1);
\draw[red,thick] (0,0)--(4,0)--(4,3);
\draw[red,thick] (0,0)--(0,3)--(4,3);
\end{tikzpicture}
\hspace{5mm}
\begin{tikzpicture}[scale=0.6]
\draw (0,0) rectangle (1,1);
\draw (0,1) rectangle (1,2);
\draw (0,2) rectangle (1,3);
\draw[] (1,1) rectangle (2,2);
\draw[] (1,0) rectangle (2,1);
\draw[] (1,2) rectangle (2,3);
\draw[] (2,1) rectangle (3,2);
\draw[] (2,2) rectangle (3,3);
\draw[] (2,0) rectangle (3,1);
\draw[] (3,0) rectangle (4,1);
\draw[] (3,1) rectangle (4,2);
\draw[] (3,2) rectangle (4,3);
\draw[fill] (0,0) circle (2pt);
\draw[fill] (1,1) circle (2pt);
\draw[fill] (2,1) circle (2pt);
\draw[fill,red] (3,1) circle (3pt);
\draw[fill] (1,2) circle (2pt);
\draw[fill] (2,2) circle (2pt);
\draw[fill] (3,2) circle (2pt);
\draw[fill] (4,3) circle (2pt);
\draw[red,thick] (0,0)--(4,0)--(4,3);
\draw[red,thick] (0,0)--(0,3)--(4,3);
\draw[red,thick] (2,1)--(3,1);
\draw[red,thick] (2,2)--(3,2);
\draw[red,thick] (2,2)--(2,3)--(4,3);
\draw[red,thick] (0,0)--(3,0)--(3,1);
\draw[red,thick] (1,2)--(1,3)--(4,3);
\end{tikzpicture}
\]
 \[\begin{tikzpicture}[scale=0.6]
\draw (0,0) rectangle (1,1);
\draw (0,1) rectangle (1,2);
\draw (0,2) rectangle (1,3);
\draw[] (1,1) rectangle (2,2);
\draw[] (1,0) rectangle (2,1);
\draw[] (1,2) rectangle (2,3);
\draw[] (2,1) rectangle (3,2);
\draw[] (2,2) rectangle (3,3);
\draw[] (2,0) rectangle (3,1);
\draw[] (3,0) rectangle (4,1);
\draw[] (3,1) rectangle (4,2);
\draw[] (3,2) rectangle (4,3);
\draw[fill] (0,0) circle (2pt);
\draw[fill] (1,1) circle (2pt);
\draw[fill] (2,1) circle (2pt);
\draw[fill] (3,1) circle (2pt);
\draw[fill,red] (1,2) circle (3pt);
\draw[fill] (2,2) circle (2pt);
\draw[fill] (3,2) circle (2pt);
\draw[fill] (4,3) circle (2pt);
\draw[red,thick] (1,2)--(1,1);
\draw[red,thick] (2,2)--(2,1);
\draw[red,thick] (3,2)--(3,1);
\draw[red,thick] (3,1)--(4,1)--(4,3);
\draw[red,thick] (0,0)--(0,2)--(1,2);
\draw[red,thick] (0,0)--(4,0)--(4,3);
\draw[red,thick] (0,0)--(0,3)--(4,3);
\end{tikzpicture}
\hspace{5mm}
\begin{tikzpicture}[scale=0.6]
\draw (0,0) rectangle (1,1);
\draw (0,1) rectangle (1,2);
\draw (0,2) rectangle (1,3);
\draw[] (1,1) rectangle (2,2);
\draw[] (1,0) rectangle (2,1);
\draw[] (1,2) rectangle (2,3);
\draw[] (2,1) rectangle (3,2);
\draw[] (2,2) rectangle (3,3);
\draw[] (2,0) rectangle (3,1);
\draw[] (3,0) rectangle (4,1);
\draw[] (3,1) rectangle (4,2);
\draw[] (3,2) rectangle (4,3);
\draw[fill] (0,0) circle (2pt);
\draw[fill] (1,1) circle (2pt);
\draw[fill] (2,1) circle (2pt);
\draw[fill] (3,1) circle (2pt);
\draw[fill] (1,2) circle (2pt);
\draw[fill,red] (2,2) circle (3pt);
\draw[fill] (3,2) circle (2pt);
\draw[fill] (4,3) circle (2pt);
\draw[red,thick] (0,0)--(4,0)--(4,3);
\draw[red,thick] (0,0)--(0,3)--(4,3);
\draw[red,thick] (1,2)--(2,2);
\draw[red,thick] (2,2)--(2,1);
\draw[red,thick] (3,2)--(3,1);
\draw[red,thick] (3,1)--(4,1)--(4,3);
\draw[red,thick] (1,2)--(1,3)--(4,3);
\end{tikzpicture}
\hspace{5mm}
\begin{tikzpicture}[scale=0.6]
\draw (0,0) rectangle (1,1);
\draw (0,1) rectangle (1,2);
\draw (0,2) rectangle (1,3);
\draw[] (1,1) rectangle (2,2);
\draw[] (1,0) rectangle (2,1);
\draw[] (1,2) rectangle (2,3);
\draw[] (2,1) rectangle (3,2);
\draw[] (2,2) rectangle (3,3);
\draw[] (2,0) rectangle (3,1);
\draw[] (3,0) rectangle (4,1);
\draw[] (3,1) rectangle (4,2);
\draw[] (3,2) rectangle (4,3);
\draw[fill] (0,0) circle (2pt);
\draw[fill] (1,1) circle (2pt);
\draw[fill] (2,1) circle (2pt);
\draw[fill] (3,1) circle (2pt);
\draw[fill] (1,2) circle (2pt);
\draw[fill] (2,2) circle (2pt);
\draw[fill,red] (3,2) circle (3pt);
\draw[fill] (4,3) circle (2pt);
\draw[red,thick] (0,0)--(4,0)--(4,3);
\draw[red,thick] (0,0)--(0,3)--(4,3);
\draw[red,thick] (2,2)--(3,2);
\draw[red,thick] (2,2)--(2,3)--(4,3);
\draw[red,thick] (3,2)--(3,1);
\draw[red,thick] (3,1)--(4,1)--(4,3);
\draw[red,thick] (1,2)--(1,3)--(4,3);
\end{tikzpicture}
\hspace{5mm}
\begin{tikzpicture}[scale=0.6]
\draw (0,0) rectangle (1,1);
\draw (0,1) rectangle (1,2);
\draw (0,2) rectangle (1,3);
\draw[] (1,1) rectangle (2,2);
\draw[] (1,0) rectangle (2,1);
\draw[] (1,2) rectangle (2,3);
\draw[] (2,1) rectangle (3,2);
\draw[] (2,2) rectangle (3,3);
\draw[] (2,0) rectangle (3,1);
\draw[] (3,0) rectangle (4,1);
\draw[] (3,1) rectangle (4,2);
\draw[] (3,2) rectangle (4,3);
\draw[fill] (0,0) circle (2pt);
\draw[fill] (1,1) circle (2pt);
\draw[fill] (2,1) circle (2pt);
\draw[fill] (3,1) circle (2pt);
\draw[fill] (1,2) circle (2pt);
\draw[fill] (2,2) circle (2pt);
\draw[fill] (3,2) circle (2pt);
\draw[fill,red] (4,3) circle (3pt);
\draw[red,thick] (0,0)--(4,0)--(4,3);
\draw[red,thick] (0,0)--(0,3)--(4,3);
\draw[red,thick] (3,2)--(4,2)--(4,3);
\draw[red,thick] (3,2)--(3,3)--(4,3);
\draw[red,thick] (2,2)--(2,3)--(4,3);
\draw[red,thick] (1,2)--(1,3)--(4,3);
\draw[red,thick] (3,1)--(4,1)--(4,3);
\end{tikzpicture}
\]

 \end{eg}
As one can see from the example, for a non-source vertex $v$, $b_{v a}=1$ if $a$ is an arrow that either
\begin{itemize}
\item has a horizontal step above and in the same column as the horizontal arrow with target $v$; or
\item has a vertical step to the right and in the same row as the vertical arrow with target $v$. 
\end{itemize}
 In particular, every path from the bottom left to top right corner contains exactly one of the arrows in
  \[\{a \in \LQ_1: b_{v a}=1\}\]
  for each $v$.

A great deal of the geometry of the special fiber $X_{P_{n,r}}$ can be read off the ladder quiver. For example, collections of arrows give divisors.  The toric variety $X_{P_{n,r}}$ has Picard rank~1, and the Picard lattice is generated by a divisor corresponding to a path from the bottom left vertex to the top right; each such path is linearly equivalent to the ample generator for the Picard lattice. This generator is the ample line bundle that gives the embedding of $X_{P_{n,r}}$ into $\PP^{\binom{n}{r}-1}$, where it is the special fiber of the Gelfand--Cetlin degeneration. As such, we label it $\mathcal{O}(1)$. The space of sections of $\mathcal{O}(1)$ has dimension $\binom{n}{r}$, and basis indexed by paths from the bottom left vertex to the top right. Fixing such a path~$p$, we can consider the area above~$p$: this gives a partition $\lambda$ which fits into an $r \times k$ box. We label the corresponding section of $\mathcal{O}(1)$ by $s_\lambda$. The Cox ring is a polynomial ring $\CC[y_a: a \in \LQ_1]$, and  $s_\lambda = \prod_{a \in p} y_a$.   

Partitions fitting into an $r \times k$ box also index Pl\U cker coordinates. Given a partition $\lambda$, there are $n$ horizontal and vertical steps in the corresponding path~$p$, and the location of the vertical steps gives a size-$r$ subset of $\{1,\dots,n\}$.

\begin{mydef}\label{def:indexing}  We denote by $\Pnr$ the set of partitions $\lambda$ that fit into an $r \times k$ box. If $\lambda \in \Pnr$, we define $I_\lambda$ to be the corresponding size-$r$ subset of $\{1,\dots,n\}$.
\end{mydef}

\begin{eg} The ladder diagram for $Gr(7,3)$ is below. The path in blue describes the partition $(2,1)$. This indexes the Pl\U cker coordinate $p_{\{1,3,5\}}$. 
\[\begin{tikzpicture}[scale=0.6]
\draw (0,0) rectangle (1,1);
\draw (0,1) rectangle (1,2);
\draw (0,2) rectangle (1,3);
\draw[] (1,1) rectangle (2,2);
\draw[] (1,0) rectangle (2,1);
\draw[] (1,2) rectangle (2,3);
\draw[] (2,1) rectangle (3,2);
\draw[] (2,2) rectangle (3,3);
\draw[] (2,0) rectangle (3,1);
\draw[] (3,0) rectangle (4,1);
\draw[] (3,1) rectangle (4,2);
\draw[] (3,2) rectangle (4,3);
\draw[fill] (0,0) circle (2pt);
\draw[fill] (1,1) circle (2pt);
\draw[fill] (2,1) circle (2pt);
\draw[fill] (3,1) circle (2pt);
\draw[fill] (1,2) circle (2pt);
\draw[fill] (2,2) circle (2pt);
\draw[fill] (3,2) circle (2pt);
\draw[fill] (4,3) circle (2pt);

\draw[blue,thick] (0,0)--(0,1)--(1,1)--(1,2)--(2,2)--(2,3)--(4,3);
\end{tikzpicture}\]

\end{eg}
\subsection{The polytope $P_{n,r}$ and its primitive dual}

To describe the Gelfand--Cetlin toric degeneration via a polytope, we draw another quiver dual to the ladder quiver (this is the quiver that gives the head-over-tails mirror \cite{eguchi}). There is a vertex for the dual quiver in each square of the ladder diagram, as well as two external vertices above and to the right of the ladder diagram. Arrows move down and to the left, so we obtain:
\[\begin{tikzpicture}[scale=0.6]
\draw[gray] (0,0) rectangle (1,1);
\draw[gray] (0,1) rectangle (1,2);
\draw[gray] (1,1) rectangle (2,2);

\draw[gray] (1,0) rectangle (2,1);
\draw[gray] (2,1) rectangle (3,2);

\draw[gray] (2,0) rectangle (3,1);
\draw[fill, gray] (0,0) circle (2pt);
\draw[fill, gray] (1,1) circle (2pt);
\draw[fill, gray] (2,1) circle (2pt);
\draw[fill, gray] (3,2) circle (2pt);

\node[circle,fill,scale=0.3] at (0.5,0.5) (a) {};
\node[circle,fill,scale=0.3] at (1.5,0.5) (b) {};
\node[circle,fill,scale=0.3] at (2.5,0.5) (c) {};
\node[circle,fill,scale=0.3] at (3.5,0.5) (d) {};
\node[circle,fill,scale=0.3] at (0.5,1.5) (e) {};
\node[circle,fill,scale=0.3] at (0.5,2.5) (f) {};

\node[circle,fill,scale=0.3] at (1.5,1.5) (i) {};

\node[circle,fill,scale=0.3] at (2.5,1.5) (k) {};

\draw[<-] (d)--(c);
\draw[<-] (c)--(b);
\draw[<-] (b)--(a);

\draw[<-] (k)--(i);
\draw[<-] (i)--(e);

\draw[->] (e)--(a);
\draw[->] (f)--(e);

\draw[->] (i)--(b);

\draw[->] (k)--(c);

\end{tikzpicture}\]
Each arrow in the ladder quiver is crossed by precisely one arrow in the dual quiver.

Notice that there are $k r$ interior vertices. Let $M$ be an $k r$-dimensional lattice, where we index the basis $e'_v$ by interior vertices $v$ in the dual quiver. Set $e'_v=0$ for external vertices $v$.
\begin{mydef} The polytope $P_{n,r}$ has a vertex for each arrow in the dual quiver. The vertex for arrow $a$ is $w_{a}:=-e'_{s(a)}+e'_{t(a)} \in M$.
\end{mydef}
Sometimes instead of the basis $e'_v$ it is convenient to use another basis, given by arrows. For each internal vertex $v$ in the dual quiver, we assign an arrow $a_v$. If there is a horizontal arrow out of vertex $v$, then this is $a_v$; otherwise $a_v$ is the vertical arrow out of $v$.  Then we set $b_v:=w_{a_v}$. In the diagram below, we highlight the arrows $a_v$ in red for $n=7, r=3$. 
\[\begin{tikzpicture}[scale=0.6]
\draw[gray] (0,0) rectangle (1,1);
\draw[gray] (0,1) rectangle (1,2);
\draw[gray] (0,2) rectangle (1,3);
\draw[gray] (1,1) rectangle (2,2);
\draw[gray] (1,0) rectangle (2,1);
\draw[gray] (1,2) rectangle (2,3);
\draw[gray] (2,1) rectangle (3,2);
\draw[gray] (2,2) rectangle (3,3);
\draw[gray] (2,0) rectangle (3,1);
\draw[gray] (3,0) rectangle (4,1);
\draw[gray] (3,1) rectangle (4,2);
\draw[gray] (3,2) rectangle (4,3);
\draw[fill, gray] (0,0) circle (2pt);
\draw[fill, gray] (1,1) circle (2pt);
\draw[fill, gray] (2,1) circle (2pt);
\draw[fill, gray] (3,1) circle (2pt);
\draw[fill, gray] (1,2) circle (2pt);
\draw[fill, gray] (2,2) circle (2pt);
\draw[fill, gray] (3,2) circle (2pt);
\draw[fill, gray] (4,3) circle (2pt);

\node[circle,fill,scale=0.3] at (0.5,0.5) (a) {};
\node[circle,fill,scale=0.3] at (1.5,0.5) (b) {};
\node[circle,fill,scale=0.3] at (2.5,0.5) (c) {};
\node[circle,fill,scale=0.3] at (3.5,0.5) (d) {};
\node[circle,fill,scale=0.3] at (4.5,0.5) (l) {};

\node[circle,fill,scale=0.3] at (0.5,1.5) (e) {};
\node[circle,fill,scale=0.3] at (1.5,1.5) (i) {};
\node[circle,fill,scale=0.3] at (2.5,1.5) (k) {};
\node[circle,fill,scale=0.3] at (3.5,1.5) (m) {};

\node[circle,fill,scale=0.3] at (0.5,2.5) (n) {};
\node[circle,fill,scale=0.3] at (1.5,2.5) (o) {};
\node[circle,fill,scale=0.3] at (2.5,2.5) (p) {};
\node[circle,fill,scale=0.3] at (3.5,2.5) (q) {};

\node[circle,fill,scale=0.3] at (0.5,3.5) (f) {};

\draw[red,<-] (l) --(d);
\draw[red,<-] (d)--(c);
\draw[red,<-] (c)--(b);
\draw[red,<-] (b)--(a);

\draw[red,<-] (k)--(i);
\draw[red,<-] (i)--(e);
\draw[red,<-] (m)--(k);

\draw[red,<-] (o)--(n);
\draw[red,<-] (p)--(o);
\draw[red,<-] (q)--(p);

\draw[->] (e)--(a);
\draw[->] (n)--(e);
\draw[->] (f)--(n);

\draw[->] (i)--(b);
\draw[->] (o)--(i);

\draw[->] (k)--(c);
\draw[->] (p)--(k);

\draw[red,->] (m)--(d);
\draw[red,->] (q)--(m);
\end{tikzpicture}
\]

Let $N$ be the lattice dual to $M$, with dual bases ${e'}_v^\vee$ and $b_v^\vee$.   We now construct the primitive dual of $P_{n,r}$, which we call $Q_{n,r}$. By definition, this is the pullback of the anticanonical polytope of $X_{P_{n,r}}$ to the lattice spanned by its vertices. We construct it in three stages:
\begin{enumerate}
\item Construct the polytope for the divisor given by the path corresponding to the empty partition. 
\item Compare the $n$-th dilate of this polytope with anticanonical polytope of $X_{P_{n,r}}$. 
\item Compute the sub-lattice generated by the vertices of the polytope. 
\end{enumerate}
Let $D_{\emptyset}$ denote the divisor corresponding to the empty partition. The path corresponding to the empty set consists of exactly one arrow, which we denote $a_\emptyset$. Set 
\begin{equation} \label{eqn:mlambda} m_\lambda:=\sum_{\lambda \text{ crosses } a_v} b^\vee_v.\end{equation}

\begin{lem}\label{lem:dualpairing} Let $a$ be an arrow in the dual quiver that is not $a_\emptyset$, and $\lambda$ a path that does not correspond to $\emptyset$. Then 
\[\langle m_\lambda,w_a\rangle=1\] 
if $\lambda$ crosses $a$ and is $0$ otherwise.  Then 
\[\langle m_\lambda,w_a\rangle=1\] 
if $\lambda$ crosses $a$ and is $0$ otherwise.  
\end{lem}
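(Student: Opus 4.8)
The plan is to turn the pairing into a computation on a spanning tree, and then into a short identity about monotone lattice paths. The key structural observation is that the assignment $v \mapsto a_v$ is secretly a spanning tree. Indeed the target of each chosen arrow $a_v$ is again an interior vertex, except that following these arrows one always moves right to the last column, then down that column, and finally into the external vertex to the right of the diagram. Hence $\{a_v\}$ forms a spanning tree $T$ on the interior vertices together with this right external vertex, which I take as the root; the other (top) external vertex, which is the source of $a_\emptyset$, is not in $T$. Since $b_v = w_{a_v} = -e'_v + e'_{t(a_v)}$ and $e'$ vanishes at the root, unwinding the recursion $e'_v = e'_{t(a_v)} - b_v$ gives $e'_u = -\sum_w b_w$, the sum over the vertices $w$ on the tree-path from $u$ to the root. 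Dualizing, $\langle b_v^\vee, e'_u\rangle = -[\,v \text{ lies on the tree-path from } u\,]$, so for a dual arrow $a$ with source $s$ and target $t$,
\[\langle b_v^\vee, w_a\rangle = [\,v \text{ on } \mathrm{path}(s)\,] - [\,v \text{ on } \mathrm{path}(t)\,].\]
Summing over the $v$ with $\lambda$ crossing $a_v$ yields $\langle m_\lambda, w_a\rangle = F_\lambda(s) - F_\lambda(t)$, where $F_\lambda(u)$ is the number of chosen arrows crossed by $\lambda$ lying on the tree-path from $u$ to the root. The lemma becomes the claim that $F_\lambda(s) - F_\lambda(t) = [\,\lambda \text{ crosses } a\,]$.

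Next I would dispose of the easy arrows. If $a = a_v$ is a tree edge then $\mathrm{path}(s) = \{v\} \cup \mathrm{path}(t)$, so $F_\lambda(s) - F_\lambda(t) = [\,\lambda \text{ crosses } a_v\,]$ holds by the very definition of $F_\lambda$; this settles every horizontal arrow and every vertical arrow in the last column. The remaining non-tree arrows are exactly the vertical arrows $(i,j)\to(i,j-1)$ with $i \le k-2$, together with $a_\emptyset$. For $a_\emptyset$, whose source is the top external vertex, $F_\lambda$ vanishes at the source and the pairing equals $-F_\lambda(0,r-1) \le 0$; this is precisely why $a_\emptyset$ must be excluded from the statement.

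The heart of the proof is the identity for a non-tree vertical arrow $a = (i,j)\to(i,j-1)$. Here I would encode $\lambda$ by its up-step columns $U_0 \le U_1 \le \cdots \le U_{r-1}$, where the unique up-step from height $m$ to $m+1$ sits in column $U_m$ (uniqueness and monotonicity being immediate for a monotone path), after pinning down which single unit segment each chosen arrow crosses: the chosen arrow at a box crosses the up-step immediately to its right, except along the last column away from the bottom row, where it crosses the right-step immediately below it. In these terms the crossing indicator of $a$ is $[\,U_{j-1} \le i \le U_j - 1\,]$, while tracing the tree-paths from $(i,j)$ and $(i,j-1)$ to the root and cancelling their common tail gives
\[F_\lambda(i,j) - F_\lambda(i,j-1) = [\,i{+}1 \le U_j \le k{-}1\,] - [\,i{+}1 \le U_{j-1} \le k{-}1\,] + [\,U_{j-1} \le k{-}1,\ U_j = k\,].\]
The two expressions agree by a short case analysis on whether $U_j = k$, using only $U_{j-1} \le U_j \le k$ and $i \le k-2$. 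I expect this bookkeeping — correctly matching each chosen arrow to the segment it crosses, and handling the boundary case $U_j = k$ coming from the last column together with the right external vertex — to be the only real obstacle; the tree reduction and the treatment of tree edges and of $a_\emptyset$ are then purely formal.
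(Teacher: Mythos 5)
Your proposal is correct and follows essentially the same route as the paper's proof: your spanning-tree expansion $e'_u=-\sum_{w}b_w$ and the resulting formula $\langle m_\lambda,w_a\rangle=F_\lambda(s)-F_\lambda(t)$ is exactly the paper's expansion of $w_a$ in the $\{b_v\}$ basis (``go right, then down, then reverse back to $t(a)$''), with the chosen arrows $a_v$ trivial in both treatments. The only difference is bookkeeping in the remaining case of a non-tree vertical arrow, which the paper settles by arguing directly that a monotone path crossing $a$ meets exactly one positive and no negative arrow (and otherwise meets none or one of each), while you verify the same cancellation in up-step coordinates $U_j$; your explicit treatment of the excluded arrow $a_\emptyset$ is a small bonus the paper omits.
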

\begin{proof}
To compute, we consider both vectors written in the $b_{v}$ or $b_v^\vee$ basis. The statement is obvious if $a=a_v$ for some $v$. Otherwise, we can assume that $a$ is a vertical arrow with both source and target internal vertices. The vector $w_a$ is expanded in the $b_v$  by following the path starting at $s(a)$, going all the way to the right, then down, and then going in reverse back to $t(a)$.  Then $w_a$ is the sum of these $b_v$, where $b_v$ has coefficient $\pm 1$ depending on whether it followed in the positive direction or in reverse.  Note that the number of arrows followed in the positive direction is one more than the number of arrows in the reverse direction. 

Suppose $\lambda$ crosses $a$. After crossing $a$, it must cross exactly one of the arrows going in the positive direction, so $\langle m_\lambda,w_a\rangle=1$. If $\lambda$ does not cross $a$, it either avoids the arrows used in the expansion of $w_a$ entirely, or it crosses one arrow in the reverse direction and one arrow in the positive, so 
\[\langle m_\lambda,w_a\rangle=1-1=0.\]

For a illustration, consider the following example:
\[\begin{tikzpicture}[scale=0.6]
\draw[gray] (0,0) rectangle (1,1);
\draw[gray] (0,1) rectangle (1,2);
\draw[gray] (0,2) rectangle (1,3);
\draw[gray] (1,1) rectangle (2,2);
\draw[gray] (1,0) rectangle (2,1);
\draw[gray] (1,2) rectangle (2,3);
\draw[gray] (2,1) rectangle (3,2);
\draw[gray] (2,2) rectangle (3,3);
\draw[gray] (2,0) rectangle (3,1);
\draw[gray] (3,0) rectangle (4,1);
\draw[gray] (3,1) rectangle (4,2);
\draw[gray] (3,2) rectangle (4,3);
\draw[fill, gray] (0,0) circle (2pt);
\draw[fill, gray] (1,1) circle (2pt);
\draw[fill, gray] (2,1) circle (2pt);
\draw[fill, gray] (3,1) circle (2pt);
\draw[fill, gray] (1,2) circle (2pt);
\draw[fill, gray] (2,2) circle (2pt);
\draw[fill, gray] (3,2) circle (2pt);
\draw[fill, gray] (4,3) circle (2pt);

\draw[blue,thick] (0,0)--(0,1)--(1,1)--(1,2)--(2,2)--(2,3)--(4,3);

\node[circle,fill,scale=0.3] at (0.5,0.5) (a) {};
\node[circle,fill,scale=0.3] at (1.5,0.5) (b) {};
\node[circle,fill,scale=0.3] at (2.5,0.5) (c) {};
\node[circle,fill,scale=0.3] at (3.5,0.5) (d) {};
\node[circle,fill,scale=0.3] at (4.5,0.5) (l) {};

\node[circle,fill,scale=0.3] at (0.5,1.5) (e) {};
\node[circle,fill,scale=0.3] at (1.5,1.5) (i) {};
\node[circle,fill,scale=0.3] at (2.5,1.5) (k) {};
\node[circle,fill,scale=0.3] at (3.5,1.5) (m) {};

\node[circle,fill,scale=0.3] at (0.5,2.5) (n) {};
\node[circle,fill,scale=0.3] at (1.5,2.5) (o) {};
\node[circle,fill,scale=0.3] at (2.5,2.5) (p) {};
\node[circle,fill,scale=0.3] at (3.5,2.5) (q) {};

\node[circle,fill,scale=0.3] at (0.5,3.5) (f) {};

\draw[red,<-] (l) --(d);
\draw[red,<-] (d)--(c);
\draw[red,<-] (c)--(b);
\draw[red,<-] (b)--(a);

\draw[red,<-] (k)--(i);
\draw[red,<-] (i)--(e);
\draw[red,<-] (m)--(k);

\draw[red,<-] (o)--(n);
\draw[red,<-] (p)--(o);
\draw[red,<-] (q)--(p);

\draw[->] (e)--(a);
\draw[->] (n)--(e);
\draw[->] (f)--(n);

\draw[->] (i)--(b);
\draw[->] (o)--(i);

\draw[->] (k)--(c);
\draw[->] (p)--(k);

\draw[red,->] (m)--(d);
\draw[red,->] (q)--(m);
\end{tikzpicture}\]
\end{proof}

\begin{cor}
The polytope $P_{D_{\emptyset}}$ corresponding to the divisor $D_\emptyset$ is the convex hull of the points
\[ \left\{m_\lambda:=\sum_{\lambda \text{ crosses } a_v} b^\vee_v \mid \lambda \in \Pnr \right\}.\]
\end{cor}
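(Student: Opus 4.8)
The plan is to compute $P_{D_\emptyset}$ through the standard toric dictionary relating torus-invariant divisors, their global sections, and lattice points, rather than by a direct convexity argument. Since $X_{P_{n,r}}$ has Picard rank one with $D_\emptyset$ generating the Picard lattice, $D_\emptyset$ is Cartier and $\mathcal{O}(D_\emptyset)=\mathcal{O}(1)$ is very ample (it defines the embedding into $\PP^{\binom{n}{r}-1}$). By the correspondence between toric divisors and polyhedra recalled above, and using that $D_\emptyset$ is the prime toric divisor indexed by the single arrow $a_\emptyset$, the polyhedron is
\[ P_{D_\emptyset}=\{m\in M_\RR : \langle m,w_{a_\emptyset}\rangle\geq -1,\ \langle m,w_a\rangle\geq 0 \text{ for all dual arrows } a\neq a_\emptyset\},\]
and its lattice points are in bijection with a monomial basis of $H^0(X_{P_{n,r}},\mathcal{O}(1))$. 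We have already fixed such a basis: the sections $s_\lambda=\prod_{a\in p}y_a$ indexed by $\lambda\in\Pnr$. The key point is therefore to identify the lattice point attached to $s_\lambda$ with $m_\lambda$.

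Concretely, I would carry out the following steps. First, write $s_\lambda$ as a Cox monomial and read off, under the identification of each ray $w_a$ of $X_{P_{n,r}}$ with the ladder arrow it crosses, that the exponent of the corresponding Cox variable equals $1$ if $\lambda$ crosses $a$ and $0$ otherwise. The lattice point $m$ corresponding to $s_\lambda$ is then characterized by $\langle m,w_a\rangle+\delta_{a,a_\emptyset}=[\lambda\text{ crosses }a]$ for every dual arrow $a$. For $a\neq a_\emptyset$ and $\lambda\neq\emptyset$ this is exactly the content of Lemma \ref{lem:dualpairing}, so the candidate $m=m_\lambda$ satisfies all of these equations. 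Second, I would dispatch the two boundary cases left uncovered by the lemma: that $m_\emptyset=0$ (the empty path crosses none of the $a_v$, so its coefficient vector vanishes), matching the canonical section $s_\emptyset=y_{a_\emptyset}\leftrightarrow m=0$; and that $\langle m_\lambda,w_{a_\emptyset}\rangle=-1$ for $\lambda\neq\emptyset$, which is the expected tight inequality and is proved by the same path-crossing bookkeeping used in the lemma (only the empty path uses the arrow $a_\emptyset$, so every nonempty $\lambda$ contributes exactly one reversed crossing in the expansion of $w_{a_\emptyset}$). Together these show that $m_\lambda$ is precisely the lattice point associated to $s_\lambda$; in particular each $m_\lambda$ lies in $P_{D_\emptyset}$ and the $m_\lambda$ are pairwise distinct.

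Finally, because $D_\emptyset$ is Cartier, every vertex of $P_{D_\emptyset}$ lies in $M$, so $P_{D_\emptyset}$ is the convex hull of $P_{D_\emptyset}\cap M$; and since $\{s_\lambda\}$ is a full basis of $H^0(\mathcal{O}(1))$, the bijection identifies $P_{D_\emptyset}\cap M$ exactly with $\{m_\lambda:\lambda\in\Pnr\}$, which yields the corollary. I expect the main obstacle to be the boundary bookkeeping for the arrow $a_\emptyset$ and the empty partition, since the clean pairing of Lemma \ref{lem:dualpairing} is stated only away from these cases; one must check that the single coefficient of $D_\emptyset$ is correctly accounted for so that the inequality $\langle m,w_{a_\emptyset}\rangle\geq -1$ becomes tight on every nonempty path, and that no spurious lattice points appear outside $\{m_\lambda\}$. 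The latter is where the two supplied facts are essential: very ampleness (hence lattice-point vertices via Cartierness) together with the statement that $\{s_\lambda\}$ already spans all of $H^0(\mathcal{O}(1))$.
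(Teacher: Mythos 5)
Your proposal is correct and takes essentially the same route as the paper: both arguments identify each $m_\lambda$ as the lattice point whose Cox-monomial homogenization is the section $s_\lambda$ of $\mathcal{O}(D_\emptyset)$, with the main computation being exactly Lemma \ref{lem:dualpairing} together with the boundary checks that $m_\emptyset=0$ and $\langle m_\lambda, w_{a_\emptyset}\rangle=-1$ for $\lambda\neq\emptyset$. The only difference is that you make explicit the concluding bookkeeping (Cartierness of $D_\emptyset$, hence lattice vertices, and the count of lattice points against $\dim H^0(\mathcal{O}(1))=\binom{n}{r}$) which the paper leaves implicit.
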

\begin{proof}
One easy way to see the claim is to check that points in this polytope homogenize to sections of $D_{\emptyset}$. This is the case if $m_{\emptyset}$ is the origin and for all other partitions $\lambda$:
\begin{itemize}
\item $\langle m_{\lambda}, w_{a_\emptyset} \rangle = -1$,
\item $\langle m_{\lambda}, w_a \rangle =1$ if $a$ is in the path $\lambda$ and $\langle m_{\lambda}, w_a \rangle =0$ otherwise. 
 \end{itemize}
The first condition is easy to check visually, and the second is precisely  Lemma \ref{lem:dualpairing}.

\end{proof}
To compute $P_{n,r}^\vee$, one must dilate the polyhedron $P_{D_\emptyset}$ by a factor of $n$, and then perform a linear shift. That is,
\[P_{n,r}^\vee=n P_{D_\emptyset}-h,\]
where $h=\sum_{v} b_v$. This can be seen by comparing the homogenization procedure of $P_{n D_\emptyset}$ and $P_{-\sum D_a}=P_{n,r}^\vee$. It follows that the vertices of $P_{n,r}^\vee$ are
\[n \cdot m_\lambda-h.\]
Finally, we note that the lattice $\overline{M}$ generated by $h, \{n b_v: \text{$v$ an interior vertex}\}$ is the lattice generated by the vertices of $P_{n,r}^\vee$. The pre-image of $P_{n,r}^\vee$ under the inclusion of lattice $\overline{M} \subset M$  is by definition the primitive dual $Q_{n,r}$. This is a Fano reflexive polytope with $\binom{n}{r}$ vertices, labeled by partitions $\lambda \in \Pnr.$ We denote the vertices of $Q_{n,r}$ as $v_\lambda$, $\lambda \in \Pnr$. 
\subsection{Weights of $X_{Q_{n,r}}$}
As $Q_{n,r}$ is a Fano reflexive polytope, we can consider the Fano toric variety $X_{Q_n,r}$. We will describe a spanning set of the kernel of the ray sequence (a basis for the kernel would give the rows of a weight matrix). There is a relation for each \emph{$m$-covering} of the ladder quiver $\LQ$. 
\begin{mydef} An \emph{$m$-covering} of the ladder quiver $\LQ$ is a collection of partitions such that the collection of associated paths in the ladder quiver contains each arrow precisely $m$ times.  
\end{mydef}
The $1$-coverings are given by \emph{crossing diagrams}. 
\begin{mydef} A \emph{crossing diagram} of $\LQ_{n,r}$ is an assignment of $X$ or $O$ to each internal vertex of $\LQ_{n,r}$. 
\end{mydef}
For example, the following is a crossing diagram for $\LQ_{5,2}$. 
 \[\begin{tikzpicture}[scale=0.6]
\draw (0,0) rectangle (1,1);
\draw (0,1) rectangle (1,2);
\draw (1,1) rectangle (2,2);

\draw (1,0) rectangle (2,1);
\draw (2,1) rectangle (3,2);

\draw (2,0) rectangle (3,1);
\draw[fill] (0,0) circle (2pt);
\draw[fill] (1,1) circle (2pt);
\draw[fill] (2,1) circle (2pt);
\draw[fill] (3,2) circle (2pt);
\node[blue] at (1,1) (A) {X};
\node[blue] at (2,1) (B) {O};
\end{tikzpicture}\]
A crossing diagram defines a set of $n$ partitions in $\Pnr$. Each partition corresponds to a path in the ladder quiver. A path $\lambda$ is a path for a given crossing diagram if
\begin{itemize}
\item For every vertex labeled $X$, if the path $\lambda$ contains this vertex, it goes straight through this vertex. 
\item For every vertex labeled $O$, if the path $\lambda$ contains this vertex, it makes a $90^\circ$ turn at this vertex.
\end{itemize}
If we continue the same example as above, the paths are $\emptyset, \yng(2), \yng(1,1),\yng(3,2),\yng(3,3)$. Notice that the empty partition and the maximal partition $(k^r)$ are paths for every crossing diagram. Together, the paths of a fixed crossing diagram cover every arrow in the ladder quiver precisely once, so they are a $1$-covering. 
\begin{prop} Let $\{\lambda_i\}$ be the $m n$ paths of an $m$-covering of a ladder quiver. Then $\sum_{i=1}^{mn} v_{\lambda_i}=0.$ These relations span all relations amongst the $v_\lambda$. 
\end{prop}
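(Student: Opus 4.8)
Throughout set $k=n-r$, so $\dim X_{Q_{n,r}}=kr$, and let $A=[A_{\alpha\lambda}]$ be the incidence matrix of the ladder quiver: its rows are indexed by arrows $\alpha\in\LQ_1$, its columns by partitions $\lambda\in\Pnr$, and $A_{\alpha\lambda}=1$ exactly when the path $\lambda$ contains $\alpha$. In this language an $m$-covering is a vector $c\in\ZZ_{\geq0}^{\Pnr}$ with $Ac=m\,\one$ (here $\one$ is the all-ones vector on $\LQ_1$), its paths being the $\lambda$ taken with multiplicity $c_\lambda$. I will use three recorded facts: that $m_\lambda=\sum_v A_{\alpha_v\lambda}\,b^\vee_v$ from \eqref{eqn:mlambda}, where $\alpha_v$ is the ladder arrow crossed by the chosen dual arrow $a_v$; that $v_\lambda=n\,m_\lambda-h$ with $h=\sum_v b^\vee_v$; and that exactly $n$ arrows leave the source, each path using precisely one of them.

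To see the relations hold, take an $m$-covering $c$. Counting paths through the $n$ source arrows gives $\sum_\lambda c_\lambda=mn$, and
\[
\sum_\lambda c_\lambda\,m_\lambda=\sum_v\Big(\sum_\lambda A_{\alpha_v\lambda}c_\lambda\Big)b^\vee_v=\sum_v (Ac)_{\alpha_v}\,b^\vee_v=m\sum_v b^\vee_v=m\,h,
\]
so that $\sum_\lambda c_\lambda v_\lambda=n\sum_\lambda c_\lambda m_\lambda-\big(\sum_\lambda c_\lambda\big)h=n(mh)-(mn)h=0$, as claimed.

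For the spanning claim, let $K\subseteq\QQ^{\Pnr}$ be the space of all relations, the kernel of $c\mapsto\sum_\lambda c_\lambda v_\lambda$; since the $v_\lambda$ are the vertices of the full-dimensional reflexive polytope $Q_{n,r}$ they span $N_\QQ$, so $\dim K=\binom{n}{r}-kr$. Put $W=\{c:Ac\in\QQ\,\one\}$. The previous paragraph shows every covering lies in $K$, so $\mathrm{span}(\text{coverings})\subseteq K$; and since each $\lambda$ occurs in some crossing diagram (label the internal vertices on $\lambda$ by $X/O$ according to whether $\lambda$ goes straight or turns, arbitrarily elsewhere), the sum of all crossing diagrams is a strictly positive covering, so the rational cone $W\cap\QQ_{\geq0}^{\Pnr}$ is full-dimensional in $W$ and $\mathrm{span}(\text{coverings})=W$. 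It remains to prove $\dim W\geq\dim K$. As a covering realises $\one\in\mathrm{im}(A)$, we have $\dim W=\binom{n}{r}-(\mathrm{rank}(A)-1)$, so this amounts to $\mathrm{rank}(A)\leq kr+1$.

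I would bound the rank through the row space of $A$, spanned by the functionals $r_\alpha:=(A_{\alpha\lambda})_\lambda$. Lemma \ref{lem:dualpairing} identifies, for every $\alpha\neq a_\emptyset$, $r_\alpha$ with $\Phi(w_a)$, where $a$ is the dual arrow crossing $\alpha$ and $\Phi\colon M_\QQ\to\QQ^{\Pnr}$, $\Phi(w)=(\langle m_\lambda,w\rangle)_\lambda$; since $Q_{n,r}$ is full-dimensional the vectors $v_\lambda-v_\emptyset=n\,m_\lambda$ span $N_\QQ$, hence so do the $m_\lambda$, whence $\Phi$ is injective and $\dim\mathrm{im}(\Phi)=kr$. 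The one remaining row $r_{a_\emptyset}$ equals the coordinate functional $e^\vee_\emptyset$, because $a_\emptyset$ lies on the empty path alone; as every vector in $\mathrm{im}(\Phi)$ vanishes at $\emptyset$ (using $m_\emptyset=0$) whereas $e^\vee_\emptyset(\emptyset)=1$, the whole row space sits inside $\mathrm{im}(\Phi)\oplus\QQ\,e^\vee_\emptyset$, of dimension $kr+1$. Thus $\mathrm{rank}(A)\leq kr+1$, giving $\dim W\geq\dim K$; combined with $\mathrm{span}(\text{coverings})=W\subseteq K$ this forces $W=K$ and proves the proposition. The main obstacle is exactly this rank count: the clean identification of arrow incidences with the pairings $\langle m_\lambda,w_a\rangle$ fails only at the empty-partition arrow $a_\emptyset$, and it is the single extra functional $e^\vee_\emptyset$ it contributes that lifts $\mathrm{rank}(A)$ from $kr$ to $kr+1$ and makes $\dim W$ coincide with $\dim K$; verifying that the covering cone is full-dimensional (equivalently, that every partition occurs in a crossing diagram) is the secondary point to handle with care.
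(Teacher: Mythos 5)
Your proof is correct, and while your verification that the covering relations hold is the same computation as the paper's (expand $m_\lambda$ in the $b_v^\vee$ basis, use that each basis arrow is covered $m$ times and that there are $mn$ paths), your proof of the spanning claim takes a genuinely different route. The paper argues combinatorially: given any relation $\sum a_i v_{\mu_i}=0$, it adds multiples of crossing-diagram relations to make all coefficients non-negative (then, as a multiset, equal to $1$), deduces from $\sum m_{\mu_i}=(k/n)h$ that each basis dual arrow $a_v$ is crossed exactly $k/n$ times, and then propagates this count to \emph{every} dual arrow, including $a_\emptyset$, by a local counting argument on rectangles in the dual quiver, concluding that the multiset of paths is itself a covering. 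You instead run a dimension count: the span of the coverings equals $W=\{c: Ac \in \QQ\,\one\}$ (using a strictly positive covering assembled from all crossing diagrams), $W\subseteq K$ by the first part, and $\dim W\geq \dim K$ follows from the rank bound $\rank(A)\leq kr+1$, which you get from the neat observation that Lemma \ref{lem:dualpairing} identifies every row of $A$ except $r_{a_\emptyset}=e^\vee_\emptyset$ with a functional $\lambda \mapsto \langle m_\lambda, w_a\rangle$ lying in the $kr$-dimensional image of $\Phi$. This is shorter, reuses Lemma \ref{lem:dualpairing} instead of introducing the rectangle-counting argument, and additionally characterizes the relation space as $K=W$ (equivalently $\rank(A)=kr+1$). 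The one thing the paper's argument buys that yours does not is integrality: it shows that any relation with coefficients all equal to $1$ literally \emph{is} a covering, hence that every integer relation is an integer combination of covering relations, whereas a dimension count only gives spanning over $\QQ$. Since the proposition is then used to describe (a redundant form of) the weight matrix --- i.e.\ the lattice of integral relations among the rays, where finite-quotient data lives in the $\ZZ$-structure --- the $\ZZ$-statement is the one ultimately wanted, and promoting your argument to $\ZZ$ would require a further saturation step; read over $\QQ$, as your setup makes explicit, your proof is complete.
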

\begin{proof}
To show the first part of the proposition, note that though the $v_{\lambda}$ are vectors in the sublattice $\overline{M}$ spanned by the vertices of $Q_{n,r}$,  it is equivalent to show that  
\[
  \sum_{i=1}^{mn} i(v_{\lambda_i})=0, \qquad \text{that is,} \qquad \sum_{i=1}^{mn} (n m_{\lambda_i}-h) = 0.
\]
Here $i$ is the inclusion $i:\overline{M} \to M$. 

Recall that $m_\lambda:=\sum_{\lambda \text{ crosses } a_v} b^\vee_v$. Since the collection of paths $\{\lambda_i\}$ together cover each arrow in $\LQ$ exactly $m$ times, it follows that 
\[\sum_{i=1}^{mn} m_{\lambda_i}=\sum_{v} b_v^\vee= m h,\]
which is equivalent to the desired equality. 

For the second part of the proposition, suppose there is some collection $\{\mu_1,\dots,\mu_k\}$ and coefficients $a_i$ satisfying
\[\sum_{i=1}^k a_i v_{\mu_i}=0.\]
Note that we can assume, by adding multiples of the relations obtained from the crossing diagrams if necessary, that all the $a_i$ are non-negative. In fact, if we allow partitions to appear multiple times, we can assume that the $a_i=1$.

As above, we convert this to an equality in $N$, where we see that
\[\sum_{i=1}^k (n m_{\mu_i}-h)=0.\]
It follows that $k$ is a multiple of $n$ and
\[\sum_{i=1}^k m_{\mu_i}=\frac{k}{n} h.\]
If we re-interpret this as a statement about ladder diagrams, it says that for each basis arrow $a_v$ in the dual quiver, precisely $k/n$ of the paths $\mu_i$ cross $a_v$. 

In fact, this is enough to show that precisely $k/n$ of the paths $\mu_i$ cross \emph{any} arrow in the dual quiver. To see this, first consider a non-basis arrow $a$, $a \neq a_\emptyset$. It is a vertical arrow with internal source and target (in the dual quiver). There is a rectangle of arrows in the dual diagram, with vertical arrow $a$ on the left side, and all other arrows (on the boundary) basis arrows. For example, this is such a rectangle (where $a$ is the green arrow and the red arrows are -- as before -- the $a_v$):  
\[\begin{tikzpicture}[scale=0.6]
\draw[gray] (0,0) rectangle (1,1);
\draw[gray] (0,1) rectangle (1,2);
\draw[gray] (1,1) rectangle (2,2);
\draw[gray] (1,0) rectangle (2,1);
\draw[gray] (2,1) rectangle (3,2);
\draw[gray] (2,0) rectangle (3,1);
\draw[gray] (3,1) rectangle (4,2);
\draw[gray] (3,0) rectangle (4,1);

\node[circle,fill,scale=0.3] at (0.5,0.5) (a) {};
\node[circle,fill,scale=0.3] at (1.5,0.5) (b) {};
\node[circle,fill,scale=0.3] at (2.5,0.5) (c) {};
\node[circle,fill,scale=0.3] at (3.5,0.5) (d) {};

\node[circle,fill,scale=0.3] at (0.5,1.5) (e) {};
\node[circle,fill,scale=0.3] at (1.5,1.5) (i) {};
\node[circle,fill,scale=0.3] at (2.5,1.5) (k) {};
\node[circle,fill,scale=0.3] at (3.5,1.5) (l) {};

\draw[red,<-] (d)--(c);
\draw[red,<-] (c)--(b);
\draw[red,<-] (b)--(a);

\draw[red,<-] (l)--(k);
\draw[red,<-] (k)--(i);
\draw[red,<-] (i)--(e);

\draw[green,->] (e)--(a);

\draw[->] (i)--(b);
\draw[->] (k)--(c);

\draw[red,->] (l)--(d);

\end{tikzpicture}\]
In general, there will be zero or more black vertical arrows in between the green vertical arrow and the red vertical arrow. Each of the red arrows is crossed $k/n$ times, and suppose $a$ is crossed $s$ times. If the width of the rectangle is $t$ arrows, this means there are $s+t k/n$ paths $\mu_i$ entering the rectangle, and $(t+1)k/n$ paths exiting the rectangle. So we can conclude that $s=k/n$.  Therefore the collection of arrows $\mu_i$ cover the ladder diagram, omitting the arrow dual to $a_\emptyset$, exactly $k/n$ times.

 Finally, now suppose that $a_\emptyset$ is crossed $r$ times by the paths -- that is, $r$ of the $\mu_i$ are the path corresponding to the empty set. The remaining arrows do not cross $a_\emptyset$, but they cover the rest of the diagram $k/n$ times, so that means that there are $(n-1)k/n$ of them. Since $r+(n-1)k/n=k$, we conclude that $r=k/n$ as required. So the set $\{\mu_i\}$ is a $k/n$-covering. 
\end{proof}
This proposition characterizes a (redundant form of) the weight matrix. 
\begin{rem} We expect that the relations from $1$-coverings span all of the relations, but did not manage to prove this and do not require it in what follows. 
\end{rem}

\subsection{Grassmannian twins} \label{sec:twin}
One of the more interesting examples of Batyrev--Borisov mirror pairs obtained from lattice inclusions is the quintic twin~\cite{doran}. This is a hypersurface in a quotient of projective space whose Picard--Fuchs equations are identical to that of the quintic. The quotient is obtained by taking the action on~$\PP^4$ of the group $\mu_5$ of fifth roots of unity given by
\[\zeta \cdot [z_0:\cdots: z_4]=[z_0: \zeta z_1: \zeta^2 z_2: \zeta^3 z_3: \zeta^4 z_4]. \] 
Let $\tilde{P}$ denote the polytope of the quotient, a lattice polytope in $\tilde{M}$, and $P$ the polytope of projective space, a lattice polytope in $M$. Then the group action corresponds to a lattice inclusion $i:M \subset \tilde{M}$. The fact that the Picard--Fuchs equations are identical follows from the fact that $i^{-1}(\tilde{P})$ is isomorphic to $P$. 

The perspective on mirror symmetry for high-index Fano varieties described in this paper also suggests that the quintic twin can be generalized to Grassmannians, as in the following example.

\begin{eg} 
  Consider the action of the group $\mu_5$ on $\Mat(2 \times 5)$ given by multiplication on the right by diagonal matrices with entries $(1,\zeta,\dots, \zeta^4)$. This extends to an action on the ambient space of the Pl\U cker embedding, and hence also on the Gelfand--Cetlin degeneration.  Let $\tilde{P}_{2,5}$ denote the lattice polytope corresponding to the quotient by $K$ of the Gelfand--Cetlin degeneration. Denote the ambient lattice of $P_{2,5}$ as $M$, and the ambient lattice of $\tilde{P}_{2,5}$ as $\tilde{M}$.  As for projective space, there is an inclusion of lattices $i:M \subset \tilde{M}$, and $i^{-1}(\tilde{P}_{2,5}) \cong P_{2,5}$. The toric variety associated to $\tilde{P}_{2,5}$  is a toric degeneration of $\Gr(2,5)/K$. 
  
  Conjectures that form part of the Corti--Golyshev Fanosearch program~\cite{CoatesCortiGalkinGolyshevKasprzyk2013} would imply that the quantum periods of $\Gr(2,5)$ and $\Gr(2,5)/K$ -- which are generating functions for the genus-zero Gromov--Witten invariants of the two spaces -- can be computed as classical periods of Laurent polynomials $f$ and $\tilde{f}$ with Newton polytopes $P_{2,5}$ and $\tilde{P}_{2,5}$ respectively.  (The statement about $\Gr(2,5)$ has been proven by Marsh--Rietsch~\cite{MarshRietsch}.)  Since $f$ and $\tilde{f}$ differ only by changing the lattice of exponents, they have the same classical period. Thus we expect that the quantum periods of $\Gr(2,5)$ and $\Gr(2,5)/K$ should coincide. In other words, their anticanonical Calabi--Yau sections exhibit a `twin phenomenon'.
\end{eg}
 
\section{The Gelfand--Cetlin degeneration and variation of GIT} \label{sec:gt}
The Grassmannian degenerates to the toric variety $X_{P_{n,r}}$, to which Batyrev--Borisov duality can be applied. The primitive dual $Q_{n,r}$ of $P_{n,r}$ has been defined and constructed in the previous section. The Batyrev--Borisov mirror of $X_{P_{n,r}}$ is $X_{Q_{n,r}}/G$, and the  Batyrev--Borisov mirror of $X_{Q_{n,r}}$ is $X_{P_{n,r}}/G$, where $G \cong (\ZZ/n \ZZ)^{r(n-r)-1}.$ The action of the group is determined by the lattice inclusions that define the primitive dual. In this section, we will explore the relationship between $P_{n,r}$ and $Q_{n,r}$ in more detail. We will see that there is a weak Fano toric variety $Y_{n,r}$ that fits into the diagrams
 \begin{center}
  \begin{tikzpicture}\small
  \node at (0,0) (A) {$X_{P_{n,r}}$};
  \node at (4,0) (B) {$X_{Q_{n,r}}$};
  \node at (2,1) (C) {$Y_{n,r}$};
  \draw[dashed, ->](C)--(B)  node[align=right,pos=0.25,right] {\tiny \: VGIT};
  \draw[->](C)--(A)  node[pos=0.25,left] {\tiny blow-up \:};
  \node at (2,0) (s) {};
  \node at (2,-1) (t) {};
  \draw[<->](s)--(t) node[align=left,midway,right]{
    \begin{minipage}{1.2cm}
      \tiny mirror symmetry
    \end{minipage}};
  \node at (4,-1) (B1) {$X_{P_{n,r}}/G$};
  \node at (0,-1) (A1) {$X_{Q_{n,r}}/G$};
  \node at (2,-2) (C1) {$Y_{n,r}/G$};
  \draw[->](C1)--(B1)  node[pos=0.1,right] {\tiny \: \: blow-up};
  \draw[->,dashed](C1)--(A1)  node[align=left,pos=0.1,left] {\tiny VGIT \:\:};
   \end{tikzpicture}
\end{center}

\begin{rem}
  The VGITs here are $K$-equivalences, and so $X_{P_{n,r}}$ and $X_{Q_{n,r}}$ are related by (a natural generalisation of) a geometric transition. In the Calabi--Yau threefold case, if $X$ and $Y$ are related by a geometric transition then it is expected that their mirrors $X^\vee$ and $Y^\vee$ are also related by a geometric transition, with the roles of the birational contraction and the smoothing reversed. We see the same phenomenon here.
\end{rem}
 
Recall that the vertices of $P_{n,r}$ are indexed by arrows in the ladder quiver. The vertices of $Q_{n,r}$ are indexed by partitions $\lambda \in \Pnr.$ As before, set $k:=n-r$. We will need to consider a set of special partitions in $\Pnr$.  Define:
\begin{enumerate}
\item $\mu_{a,b}=(\underbrace{k,\dots,k}_{a \text{ times}},b)$ where $0 \leq a \leq r$ and $0 \leq b \leq k$.
\item $\nu_{a,b}$ to be the partition such that its transpose is $\nu_{a,b}^t=(\underbrace{r,\dots,r}_{a \text{ times}},b)$ where $0 \leq a \leq k$ and $0 \leq b \leq r$. 
\end{enumerate}
We will assign a partition $\mu_{a,b}$ or $\nu_{a,b}$ to each arrow in the dual quiver. This is equivalent to assigning such a partition to each arrow in the ladder quiver, since each arrow in the dual quiver crosses precisely one arrow in the ladder quiver. It is easiest to see how the arrows are assigned in an example -- see Figure \ref{figure:labeling}. 
\Yboxdim{3pt}
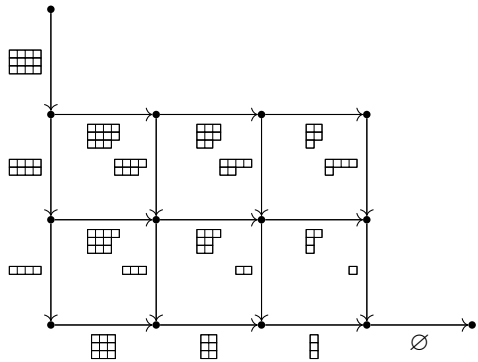
\begin{figure}[hb]
\caption{The labeling for $n=7$, $r=3$}
\[\begin{tikzpicture}[scale=1.4]
\node[circle,fill,scale=0.3] at (0.5,0.5) (a) {};
\node[circle,fill,scale=0.3] at (1.5,0.5) (b) {};
\node[circle,fill,scale=0.3] at (2.5,0.5) (c) {};
\node[circle,fill,scale=0.3] at (3.5,0.5) (d) {};
\node[circle,fill,scale=0.3] at (4.5,0.5) (l) {};

\node[circle,fill,scale=0.3] at (0.5,1.5) (e) {};
\node[circle,fill,scale=0.3] at (1.5,1.5) (i) {};
\node[circle,fill,scale=0.3] at (2.5,1.5) (k) {};
\node[circle,fill,scale=0.3] at (3.5,1.5) (m) {};

\node[circle,fill,scale=0.3] at (0.5,2.5) (n) {};
\node[circle,fill,scale=0.3] at (1.5,2.5) (o) {};
\node[circle,fill,scale=0.3] at (2.5,2.5) (p) {};
\node[circle,fill,scale=0.3] at (3.5,2.5) (q) {};

\node[circle,fill,scale=0.3] at (0.5,3.5) (f) {};
\draw[] (l) --(d)  node[midway,below] {\text{$\emptyset$}};
\draw[] (d)--(c)  node[midway,below] {\text{$\yng(1,1,1)$}};
\draw[] (c)--(b)  node[midway,below] {\text{$\yng(2,2,2)$}};
\draw[] (b)--(a) node[midway,below] {\text{$\yng(3,3,3)$}};

\draw[<-] (l) --(d);
\draw[<-] (d)--(c);
\draw[<-] (c)--(b);
\draw[<-] (b)--(a);

\draw[] (k)--(i) node[midway,below] {\yng(3,2,2)};
\draw[] (i)--(e) node[midway,below] {\yng(4,3,3)};
\draw[] (m)--(k) node[midway,below] {\yng(2,1,1)};

\draw[<-] (k)--(i);
\draw[<-] (i)--(e);
\draw[<-] (m)--(k);

\draw[] (o)--(n) node[midway,below] {\yng(4,4,3)};
\draw[] (p)--(o) node[midway,below] {\yng(3,3,2)};
\draw[] (q)--(p) node[midway,below] {\yng(2,2,1)};

\draw[<-] (o)--(n);
\draw[<-] (p)--(o);
\draw[<-] (q)--(p);

\draw[] (e)--(a)  node[midway,left] {\yng(4)};
\draw[] (n)--(e)  node[midway,left] {\yng(4,4)};
\draw[] (f)--(n)  node[midway,left] {\yng(4,4,4)};

\draw[->] (e)--(a);
\draw[->] (n)--(e);
\draw[->] (f)--(n);

\draw[] (i)--(b)   node[midway,left] {\yng(3)};
\draw[] (o)--(i)   node[midway,left] {\yng(4,3)};

\draw[->] (i)--(b);
\draw[->] (o)--(i);

\draw[->] (k)--(c);
\draw[->] (p)--(k);

\draw[] (k)--(c) node[midway,left] {\yng(2)};
\draw[] (p)--(k) node[midway,left] {\yng(4,2)};

\draw[->] (m)--(d);
\draw[->] (q)--(m);

\draw[] (m)--(d)  node[midway,left] {\yng(1)};
\draw[] (q)--(m)  node[midway,left] {\yng(4,1)};
\end{tikzpicture}\]
\label{figure:labeling}
\end{figure}

The following rules describe the assignment in general:
\begin{itemize}
\item To the arrow with source an external vertex, we assign the maximal partition $(k^r)$. To the arrow with target an external vertex, we assign $\emptyset$.
\item There are $r$ rows of horizontal arrows with internal vertices, with each row containing $k-1$ arrows. To the $a^{th}$ arrow (from the right) in the $b^{th}$ row (from the bottom) of horizontal arrows, we assign $\nu_{a,b-1}$.  
\item There are $k$ columns of vertical arrows, with each row containing $r-1$ arrows. To the $a^{th}$ arrow (from the bottom) in the $b^{th}$ column (from the right) of vertical arrows, we assign $\mu_{a-1,b}$.  
\end{itemize}
This defines a map from the arrows of the ladder quiver to $\Pnr$, which we call $\phi:\LQ_1 \to \Pnr$.

 \begin{mydef} A partition $\lambda \in \Pnr$ is an \emph{arrow partition} if it is in the image of $\phi: \LQ_1 \to \Pnr$, that is, if it is of the form:
\begin{enumerate}
\item $\nu_{a,b-1}$, for $1 \leq a \leq k$ and $1 \leq b \leq r-1$.
\item $\mu_{a-1,b}$ for $1 \leq a \leq r$ and  $1 \leq b \leq k-1$.
\end{enumerate}
The set of arrow partitions is denoted $\Anr$. Its complement in $\Pnr$ is labeled $\Bnr$; partitions in $\Bnr$ are called \emph{excess partitions}. 
\end{mydef}
There are $2(r-1)(n-r-1)+n$ arrow partitions.

\begin{eg} Let $r=3$,~$n=6$. Then 
  \[\Bnr=\{\yng(1,1),\yng(2,1),\yng(2,2),\yng(3,1,1),\yng(3,2,1),\yng(3,3,1)\}.\]
\end{eg}

\begin{lem}\label{lem:cd}
Consider a path in the dual quiver between the two external vertices. The set of partitions given by the labels of the arrows in the path are the set of partitions associated to some crossing diagram. 
\end{lem}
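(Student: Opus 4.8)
My plan is to attach to every dual path $\gamma$ a crossing diagram $C(\gamma)$ and prove that the set of partitions of $C(\gamma)$ is exactly the label set $\{\phi(a):a\in\gamma\}$, arguing by induction over the elementary flips that relate dual paths. For the setup, note first that a path between the two external vertices runs monotonically down and to the right, so it uses $r$ vertical and $k$ horizontal arrows, i.e.\ exactly $n$ arrows, and therefore carries $n$ labels under $\phi$. The unit squares of the dual quiver are in natural bijection with the interior vertices of the ladder quiver — each square is the $2\times 2$ block of dual vertices meeting at a single interior crossing point — and hence with the sites of a crossing diagram. Recording at each square which of its two sides $\gamma$ rounds defines an $X$/$O$ assignment $C(\gamma)$ (with orientation pinned down by the base case below), and the goal becomes the set equality $\{\phi(a):a\in\gamma\}=\{\text{partitions of }C(\gamma)\}$.

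The engine is the flip graph of monotone lattice paths, which is connected: any two dual paths differ by a sequence of elementary flips, each rerouting $\gamma$ around a single square — replacing the two arrows on one pair of sides by the two arrows on the complementary pair — and correspondingly toggling $C(\gamma)$ at exactly that one site. For the base case I would take the staircase path that descends immediately in the leftmost column and then runs along the bottom row to the exit. Reading off $\phi$, its labels are $\emptyset$, the full-height rectangles $(a^r)$ for $0\le a\le k$, and the full-width rectangles $(k^b)$ for $0\le b\le r$; since a rectangular path turns only on the boundary of the $r\times k$ box, each of these goes straight through every interior vertex it meets, so this family is precisely the set of partitions of the all-$X$ crossing diagram. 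Thus the claim holds at the base.

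It remains to check that a single flip changes the two sets compatibly, and this local verification is the crux and the main obstacle. On the path side a flip deletes the labels of two arrows and inserts two others, and the explicit $\mu_{a,b}$/$\nu_{a,b}$ pattern pins these four partitions down. On the diagram side, each interior vertex lies on exactly two partitions of a crossing diagram (the two strands meeting there), and toggling the site from $X$ to $O$ deletes the two that cross (go straight) and inserts the two that bounce (turn); translating through the correspondence $\lambda\mapsto I_\lambda$ between partitions and lattice paths identifies these four partitions explicitly. The heart of the argument is the bookkeeping that matches the deleted pair of labels with the deleted pair of partitions, and likewise the inserted pairs — a finite but delicate local computation complicated by the reflection relating a flip site to the toggled crossing-diagram site. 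Granting this matching, connectedness of the flip graph together with the base case proves the lemma; as a byproduct the labels of any dual path form a $1$-covering of $\LQ$, in agreement with the identification of $1$-coverings with crossing diagrams.
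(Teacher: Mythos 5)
Your flip-induction strategy is genuinely different from the paper's argument, and your base case is correct: the staircase path carries the labels $\emptyset$, $(a^r)$ for $0\le a\le k$ and $(k^b)$ for $0\le b\le r$, which are exactly the $n$ paths of the all-$X$ crossing diagram. But there is a genuine gap: the inductive step, which you yourself call the crux, is never carried out --- you end by ``granting this matching.'' That matching is the entire content of the lemma, and it cannot be waved through, because the naive form of it (toggle the crossing diagram at the interior vertex sitting at the centre of the flipped square) is \emph{false}. Take $n=7$, $r=3$, and flip the staircase path around the dual square centred at the interior ladder vertex $(1,1)$: reading Figure \ref{figure:labeling}, the flip removes the labels $(4)$ and $(3,3,3)$ and inserts $(4,3,3)$ and $(3)$. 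Toggling the all-$X$ diagram at the site $(1,1)$, however, removes the paths $(1,1,1)$ and $(4,4)$ and inserts $(1,1)$ and $(4,4,1)$ --- a different exchange entirely. The site that actually gets toggled is $(3,2)$, the image of $(1,1)$ under the $180^\circ$ rotation of the grid of interior vertices. So the ``reflection'' you allude to is not a bookkeeping nuisance but the heart of the matter: one must identify, via the explicit $\mu_{a,b}$/$\nu_{a,b}$ labelling, the four labels around a flip square with the four paths through the \emph{rotated} site in the old and new diagrams, and verify they correspond. Until that finite computation is done, nothing is proved.

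Two further points. First, $C(\gamma)$ as you define it (``which of its two sides $\gamma$ rounds'') is undefined at squares that $\gamma$ does not touch; for the induction you should instead define $C(\gamma)$ along the flip sequence itself (start from all-$X$, toggle the rotated site at each flip), which also sidesteps any well-definedness question, since the lemma only asks for \emph{some} crossing diagram. Second, you implicitly use that a toggle changes the path set by deleting exactly the two old paths through the toggled site and adding exactly the two new ones; this is true, but it needs the observation that in any crossing diagram precisely two paths pass through each interior vertex (the four arrows incident to a vertex are each covered once, by the $1$-covering property, and each path through the vertex uses two of them). For contrast, the paper's proof avoids induction altogether: it introduces pairwise ``crossing-diagram compatibility,'' shows the label of any dual arrow is compatible with the label of every dual arrow below and to its right (compatibility is a per-vertex condition, so pairwise compatibility of the $n$ labels along a path yields one diagram containing them all), and concludes by counting. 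Completing your route requires exactly the local matching you skipped; the paper's route replaces it with a monotonicity argument that is considerably shorter.
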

\begin{proof}
We say that two partitions are crossing diagram compatible if they can both appear in the same crossing diagram. Fix a dual arrow $a$, and consider all arrows in the dual quiver strictly to the right and below $a$ (i.e. all arrows that could appear after $a$ in a path on the dual quiver). Suppose $a$ is vertical, so that $\phi(a)$ is of the form
\[\begin{tikzpicture}[scale=0.4]
\draw (0,0) rectangle (5,5);
\draw[green,thick] (0,0)--(0,2)--(3,2)--(3,3)--(5,3)--(5,5);
\end{tikzpicture}\]
The two outgoing dual arrows from $t(a)$ have the labels drawn in orange and yellow below, and hence are each crossing diagram compatible with $\phi(a)$.
\[
\begin{tikzpicture}[scale=0.4]
\draw (0,0) rectangle (5,5);
\draw[green,thick] (0.05,0)--(0.05,2)--(3,2)--(3,3)--(5,3)--(5,5);
\draw[orange,thick] (0,0)--(0,3)--(3,3)--(3,4)--(4.9,4)--(4.9,5);

\end{tikzpicture}, \hspace{5mm} 
\begin{tikzpicture}[scale=0.4]
\draw (0,0) rectangle (5,5);
\draw[green,thick] (0,0)--(0,2)--(3,2)--(3,3)--(5,3)--(5,5);

\draw[yellow,thick] (0,0)--(2,0)--(2,3)--(3,3)--(3,5)--(5,5);
\end{tikzpicture}\]
 Since all other arrows below and to the right of $a$ have labels obtained by iteratively removing single boxes and complete rows or columns from these two, they are all also crossing-diagram compatible with $\phi(a)$. The same analysis shows the same statement for $a$ horizontal.  If we now take a path between the two external vertices of the dual quiver, it is clear that all of the labels along the path are contained in the same crossing diagram, and since there are $n$ of them, they are precisely the set of partitions associated to this crossing diagram. 
\end{proof}

The polytopes $P_{n,r}$ and $Q_{n,r}$ are closely related. Recall that the vertices of $Q_{n,r}$ are given by $v_\lambda$,~$\lambda \in \Pnr$.
\begin{thm} \label{thm:forgetvertices} Let $\overline{Q}_{n,r}$ be the polytope obtained by deleting the vertices of $Q_{n,r}$ associated to the excess partitions, i.e., $\overline{Q}_{n,r}$ is the convex hull of $\{v_\lambda : \lambda \in \Anr\}$. Then $P_{n,r}$ and $\overline{Q}_{n,r}$ are isomorphic as lattice polytopes.
\end{thm}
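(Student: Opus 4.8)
The plan is to produce an explicit lattice isomorphism $\Psi \colon M \to \overline{M}$ carrying the vertex set $\{w_a\}$ of $P_{n,r}$ onto the vertex set $\{v_\lambda : \lambda \in \Anr\}$ of $\overline{Q}_{n,r}$, where the correspondence of vertices is dictated by the labelling map $\phi$. The vertices of $P_{n,r}$ are indexed by the arrows $a$ of the dual quiver and the vertices of $\overline{Q}_{n,r}$ by the arrow partitions; since both sets have cardinality $n + 2(r-1)(k-1)$, the assignment $a \mapsto \phi(a)$ is a bijection from the dual-quiver arrows onto $\Anr$. Using the basis $\{b_v = w_{a_v}\}$ of $M$ indexed by the interior vertices $v$ of the dual quiver, I would define $\Psi$ on this basis by $\Psi(b_v) := v_{\phi(a_v)}$, and then prove that $\Psi(w_a) = v_{\phi(a)}$ for \emph{every} arrow $a$ and that $\Psi$ is a lattice isomorphism.

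To organise this, let $\alpha, \beta \colon \ZZ^{\LQ_1} \to M, \overline{M}$ be the linear maps sending a generator $e_a$ to $w_a$ and to $v_{\phi(a)}$ respectively (I abuse $\LQ_1$ for the set of dual-quiver arrows, which biject with the ladder arrows). The theorem then reduces to two claims: \textbf{(A)} $\ker\alpha \subseteq \ker\beta$, and \textbf{(B)} the $kr$ vectors $\{v_{\phi(a_v)} : v \text{ interior}\}$ form a $\ZZ$-basis of $\overline{M}$. Granting (B), the map $\Psi$ sends the basis $\{b_v\}$ to a basis of $\overline M$ and is therefore a lattice isomorphism. Granting (A), for any arrow $a$ we write $w_a = \sum_v c^{(a)}_v b_v$; then $e_a - \sum_v c^{(a)}_v e_{a_v} \in \ker\alpha \subseteq \ker\beta$, which says precisely that $v_{\phi(a)} = \sum_v c^{(a)}_v v_{\phi(a_v)} = \Psi(w_a)$. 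Thus $\Psi$ carries $\{w_a\}$ bijectively onto $\{v_\lambda : \lambda \in \Anr\}$, whence $\Psi(P_{n,r}) = \overline{Q}_{n,r}$.

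Claim (A) I expect to be essentially formal. The kernel $\ker\alpha$ consists of the integer flows on the dual quiver that are conserved at each interior vertex, the external vertices imposing no condition since $e'_v = 0$ there; by the standard planar description of the cycle space it is generated over $\ZZ$ by the $(r-1)(k-1)$ unit-square cycles together with one directed path between the two external vertices, matching the expected rank $\#\LQ_1 - kr = (r-1)(k-1)+1$. For any directed external-to-external path $p$, telescoping $w_a = -e'_{s(a)} + e'_{t(a)}$ gives $\sum_{a \in p} w_a = 0$, so $p \in \ker\alpha$; and Lemma \ref{lem:cd} identifies the labels $\{\phi(a) : a \in p\}$ with the partition set of a crossing diagram, whose defining $1$-covering relation gives $\sum_{a \in p} v_{\phi(a)} = 0$, so $p \in \ker\beta$. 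Each unit-square cycle is the difference of two such paths agreeing away from the square, hence also lies in $\ker\beta$. This gives (A).

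The crux is Claim (B), and it is where I expect the real work to lie. Since $i(v_{\phi(a_v)}) = n\, m_{\phi(a_v)} - h$ has $b^\vee$-coordinate vector $n\,C_{v,\cdot} - \mathbf{1}$, where $C_{v,v'} = 1$ if the path $\phi(a_v)$ crosses the basis arrow $a_{v'}$ and $0$ otherwise, and $\overline{M}$ is the sublattice of $N$ whose $b^\vee_v$-coordinates are all congruent modulo $n$, claim (B) is equivalent to the index computation $\lvert \det(nC - J) \rvert = [N : \overline{M}] = n^{kr-1}$, with $J$ the all-ones matrix. Equivalently, every excess vertex $v_\mu$ with $\mu \in \Bnr$ must already lie in the lattice spanned by the basis-arrow vertices. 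I would attack this by exploiting the recursive structure of the ladder to order the interior vertices so that $C$ becomes triangular, and then evaluate $\det(nC - J)$ via the matrix-determinant lemma (computing $\mathbf{1}^{\top} C^{-1} \mathbf{1}$); alternatively one can eliminate the excess partitions inductively using the covering relations established for the $v_\lambda$. It is this determinant/spanning statement, rather than the formal identity (A), that I expect to be the main obstacle.
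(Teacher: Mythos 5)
Your overall strategy is valid, and it is close in substance to the paper's own proof, which shows instead that $P_{n,r}$ and $\overline{Q}_{n,r}$ are vertex-spanning Fano polytopes with the same weight matrix and deduces the isomorphism from that. Your claim (A) is essentially the paper's Claim \ref{claim:crossing}: there, the supports of the rows of the weight matrix $B$ (which generate the relation lattice you call $\ker\alpha$) are shown to be external-to-external paths in the dual quiver, whose labels form crossing diagrams by Lemma \ref{lem:cd} and hence give relations $\sum_a v_{\phi(a)}=0$ by the $1$-covering proposition. You use a different generating set of $\ker\alpha$ (unit-square cycles plus one directed path) but exactly the same two ingredients, and your treatment of (A) is complete. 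The difference in packaging is real but minor: your explicit-isomorphism formulation has the small advantage of not needing to establish separately that $\overline{Q}_{n,r}$ is Fano, which the paper's rank-counting argument does require.

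The genuine gap is claim (B). You correctly identify it as the crux, correctly reformulate it as $\lvert\det(nC-J)\rvert = n^{r(n-r)-1}$, and then do not prove it: ``I would attack this by\dots'' and ``alternatively one can\dots'' are plans, not arguments, and without (B) the map $\Psi$ is not known to be an isomorphism, so the theorem does not follow. Moreover, you have made this step harder than it needs to be. Given (A), every $v_{\phi(a)}$ is an integer combination of the $v_{\phi(a_v)}$, so (B) is equivalent to the statement that the full vertex set $\{v_\lambda : \lambda \in \Anr\}$ generates $\overline{M}$ (a generating set of size $r(n-r)$ of a rank-$r(n-r)$ lattice is automatically a basis). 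This is precisely the spanning statement in the paper's proof, and it admits a softer reduction than a determinant evaluation: since $\emptyset \in \Anr$ and $i(v_\emptyset) = -h$, the sublattice generated by the $i(v_\lambda) = n m_\lambda - h$ equals $\ZZ h + nL$, where $L$ is the sublattice of $N$ generated by $\{m_\lambda : \lambda \in \Anr\}$, while $\overline{M} = \ZZ h + nN$; so everything reduces to showing $L = N$, an elementary check from the explicit $0/1$ description \eqref{eqn:mlambda} of the $m_\lambda$ (in examples one finds arrow partitions whose $m_\lambda$ are unit vectors, or become unit vectors after subtracting the $m$-vector of another arrow partition). The paper is itself terse here -- it asserts this spanning claim and leaves the details to the reader -- but it does supply the reduction; your proposal, as written, leaves its self-identified main step open, so it is not yet a proof.
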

\begin{proof} To prove the statement, it suffices to show
\begin{enumerate}
\item that $\overline{Q}_{n,r}$ is a Fano polytope (that is, it has primitive vertices and the origin is in its strict interior);
\item that both polytopes have vertices that span the ambient lattice;
\item and that the associated toric varieties $X_{P_{n,r}}$ and $X_{\overline{Q}_{n,r}}$ have the same weight matrix. 
\end{enumerate}
We will first show a claim that will help with all three statements.

Since $\overline{Q}_{n,r}$ is obtained from $Q_{n,r}$ by forgetting some vertices, relations between vertices in the former are just given by relations between vertices in the latter that do not involve any of the forgotten vertices. We will describe a crossing diagram -- which in particular is a relation between the vertices of $Q_{n,r}$ -- for each non-source vertex in the ladder quiver, such that all associated paths are in $\Anr$. 

\begin{claim} \label{claim:crossing}
  Fix a non-source vertex $v$. Then the set $\{\phi(a): b_{va}=1\}$ is precisely the set of paths for some crossing diagram. 
\end{claim}
\begin{proof}[Proof of Claim]
We have described the set $\{\phi(a): b_{va}=1\}$ in Example \ref{eg:1set}. It will be more convenient if we dualize the picture given there: the highlighted vertex  (if internal) becomes a square in the dual quiver, and the highlighted arrows correspond to highlighted dual arrows. In the case of the external vertex, no square is highlighted.  The diagrams then become:
\[\begin{tikzpicture}[scale=0.6]
\draw[fill, yellow] (0.5,0.5) rectangle (1.5,1.5);
\node[circle,fill,scale=0.3] at (0.5,0.5) (a) {};
\node[circle,fill,scale=0.3] at (1.5,0.5) (b) {};
\node[circle,fill,scale=0.3] at (2.5,0.5) (c) {};
\node[circle,fill,scale=0.3] at (3.5,0.5) (d) {};
\node[circle,fill,scale=0.3] at (4.5,0.5) (l) {};

\node[circle,fill,scale=0.3] at (0.5,1.5) (e) {};
\node[circle,fill,scale=0.3] at (1.5,1.5) (i) {};
\node[circle,fill,scale=0.3] at (2.5,1.5) (k) {};
\node[circle,fill,scale=0.3] at (3.5,1.5) (m) {};

\node[circle,fill,scale=0.3] at (0.5,2.5) (n) {};
\node[circle,fill,scale=0.3] at (1.5,2.5) (o) {};
\node[circle,fill,scale=0.3] at (2.5,2.5) (p) {};
\node[circle,fill,scale=0.3] at (3.5,2.5) (q) {};

\node[circle,fill,scale=0.3] at (0.5,3.5) (f) {};

\draw[red,<-] (l) --(d);
\draw[red,<-] (d)--(c);
\draw[red, <-] (c)--(b);
\draw[red,<-] (b)--(a);

\draw[<-] (k)--(i);
\draw[<-] (i)--(e);
\draw[<-] (m)--(k);

\draw[<-] (o)--(n);
\draw[<-] (p)--(o);
\draw[<-] (q)--(p);

\draw[red, ->] (e)--(a);
\draw[red,->] (n)--(e);
\draw[red, ->] (f)--(n);

\draw[->] (i)--(b);
\draw[->] (o)--(i);

\draw[->] (k)--(c);
\draw[->] (p)--(k);

\draw[->] (m)--(d);
\draw[->] (q)--(m);
\end{tikzpicture}, \hspace{5mm} \begin{tikzpicture}[scale=0.6]
\draw[fill, yellow] (1.5,0.5) rectangle (2.5,1.5);
\node[circle,fill,scale=0.3] at (0.5,0.5) (a) {};
\node[circle,fill,scale=0.3] at (1.5,0.5) (b) {};
\node[circle,fill,scale=0.3] at (2.5,0.5) (c) {};
\node[circle,fill,scale=0.3] at (3.5,0.5) (d) {};
\node[circle,fill,scale=0.3] at (4.5,0.5) (l) {};

\node[circle,fill,scale=0.3] at (0.5,1.5) (e) {};
\node[circle,fill,scale=0.3] at (1.5,1.5) (i) {};
\node[circle,fill,scale=0.3] at (2.5,1.5) (k) {};
\node[circle,fill,scale=0.3] at (3.5,1.5) (m) {};

\node[circle,fill,scale=0.3] at (0.5,2.5) (n) {};
\node[circle,fill,scale=0.3] at (1.5,2.5) (o) {};
\node[circle,fill,scale=0.3] at (2.5,2.5) (p) {};
\node[circle,fill,scale=0.3] at (3.5,2.5) (q) {};

\node[circle,fill,scale=0.3] at (0.5,3.5) (f) {};

\draw[red,<-] (l) --(d);
\draw[red,<-] (d)--(c);
\draw[red,<-] (c)--(b);
\draw[<-] (b)--(a);

\draw[<-] (k)--(i);
\draw[<-] (i)--(e);
\draw[<-] (m)--(k);

\draw[red, <-] (o)--(n);
\draw[<-] (p)--(o);
\draw[<-] (q)--(p);

\draw[->] (e)--(a);
\draw[->] (n)--(e);
\draw[red,->] (f)--(n);

\draw[red,->] (i)--(b);
\draw[red,->] (o)--(i);

\draw[->] (k)--(c);
\draw[->] (p)--(k);

\draw[->] (m)--(d);
\draw[->] (q)--(m);
\end{tikzpicture},  \hspace{5mm}
 \begin{tikzpicture}[scale=0.6]
 \draw[fill, yellow] (2.5,0.5) rectangle (3.5,1.5);
\node[circle,fill,scale=0.3] at (0.5,0.5) (a) {};
\node[circle,fill,scale=0.3] at (1.5,0.5) (b) {};
\node[circle,fill,scale=0.3] at (2.5,0.5) (c) {};
\node[circle,fill,scale=0.3] at (3.5,0.5) (d) {};
\node[circle,fill,scale=0.3] at (4.5,0.5) (l) {};

\node[circle,fill,scale=0.3] at (0.5,1.5) (e) {};
\node[circle,fill,scale=0.3] at (1.5,1.5) (i) {};
\node[circle,fill,scale=0.3] at (2.5,1.5) (k) {};
\node[circle,fill,scale=0.3] at (3.5,1.5) (m) {};

\node[circle,fill,scale=0.3] at (0.5,2.5) (n) {};
\node[circle,fill,scale=0.3] at (1.5,2.5) (o) {};
\node[circle,fill,scale=0.3] at (2.5,2.5) (p) {};
\node[circle,fill,scale=0.3] at (3.5,2.5) (q) {};

\node[circle,fill,scale=0.3] at (0.5,3.5) (f) {};

\draw[red,<-] (l) --(d);
\draw[red,<-] (d)--(c);
\draw[<-] (c)--(b);
\draw[<-] (b)--(a);

\draw[<-] (k)--(i);
\draw[<-] (i)--(e);
\draw[<-] (m)--(k);

\draw[red,<-] (o)--(n);
\draw[red,<-] (p)--(o);
\draw[<-] (q)--(p);

\draw[->] (e)--(a);
\draw[->] (n)--(e);
\draw[red,->] (f)--(n);

\draw[->] (i)--(b);
\draw[->] (o)--(i);

\draw[red,->] (k)--(c);
\draw[red,->] (p)--(k);

\draw[->] (m)--(d);
\draw[->] (q)--(m);
\end{tikzpicture},\]
\[\begin{tikzpicture}[scale=0.6]
\draw[fill, yellow] (0.5,1.5) rectangle (1.5,2.5);
\node[circle,fill,scale=0.3] at (0.5,0.5) (a) {};
\node[circle,fill,scale=0.3] at (1.5,0.5) (b) {};
\node[circle,fill,scale=0.3] at (2.5,0.5) (c) {};
\node[circle,fill,scale=0.3] at (3.5,0.5) (d) {};
\node[circle,fill,scale=0.3] at (4.5,0.5) (l) {};

\node[circle,fill,scale=0.3] at (0.5,1.5) (e) {};
\node[circle,fill,scale=0.3] at (1.5,1.5) (i) {};
\node[circle,fill,scale=0.3] at (2.5,1.5) (k) {};
\node[circle,fill,scale=0.3] at (3.5,1.5) (m) {};

\node[circle,fill,scale=0.3] at (0.5,2.5) (n) {};
\node[circle,fill,scale=0.3] at (1.5,2.5) (o) {};
\node[circle,fill,scale=0.3] at (2.5,2.5) (p) {};
\node[circle,fill,scale=0.3] at (3.5,2.5) (q) {};

\node[circle,fill,scale=0.3] at (0.5,3.5) (f) {};

\draw[red,<-] (l) --(d);
\draw[<-] (d)--(c);
\draw[<-] (c)--(b);
\draw[<-] (b)--(a);

\draw[red,<-] (k)--(i);
\draw[red,<-] (i)--(e);
\draw[red,<-] (m)--(k);

\draw[<-] (o)--(n);
\draw[<-] (p)--(o);
\draw[<-] (q)--(p);

\draw[->] (e)--(a);
\draw[red,->] (n)--(e);
\draw[red,->] (f)--(n);

\draw[->] (i)--(b);
\draw[->] (o)--(i);

\draw[->] (k)--(c);
\draw[->] (p)--(k);

\draw[red,->] (m)--(d);
\draw[->] (q)--(m);
\end{tikzpicture}, \hspace{5mm} \begin{tikzpicture}[scale=0.6]
\draw[fill, yellow] (1.5,1.5) rectangle (2.5,2.5);
\node[circle,fill,scale=0.3] at (0.5,0.5) (a) {};
\node[circle,fill,scale=0.3] at (1.5,0.5) (b) {};
\node[circle,fill,scale=0.3] at (2.5,0.5) (c) {};
\node[circle,fill,scale=0.3] at (3.5,0.5) (d) {};
\node[circle,fill,scale=0.3] at (4.5,0.5) (l) {};

\node[circle,fill,scale=0.3] at (0.5,1.5) (e) {};
\node[circle,fill,scale=0.3] at (1.5,1.5) (i) {};
\node[circle,fill,scale=0.3] at (2.5,1.5) (k) {};
\node[circle,fill,scale=0.3] at (3.5,1.5) (m) {};

\node[circle,fill,scale=0.3] at (0.5,2.5) (n) {};
\node[circle,fill,scale=0.3] at (1.5,2.5) (o) {};
\node[circle,fill,scale=0.3] at (2.5,2.5) (p) {};
\node[circle,fill,scale=0.3] at (3.5,2.5) (q) {};

\node[circle,fill,scale=0.3] at (0.5,3.5) (f) {};

\draw[red,<-] (l) --(d);
\draw[<-] (d)--(c);
\draw[<-] (c)--(b);
\draw[<-] (b)--(a);

\draw[red,<-] (k)--(i);
\draw[<-] (i)--(e);
\draw[red,<-] (m)--(k);

\draw[red,<-] (o)--(n);
\draw[<-] (p)--(o);
\draw[<-] (q)--(p);

\draw[->] (e)--(a);
\draw[->] (n)--(e);
\draw[red,->] (f)--(n);

\draw[->] (i)--(b);
\draw[red,->] (o)--(i);

\draw[->] (k)--(c);
\draw[->] (p)--(k);

\draw[red,->] (m)--(d);
\draw[->] (q)--(m);
\end{tikzpicture},  \hspace{5mm}
 \begin{tikzpicture}[scale=0.6]
 \draw[fill, yellow] (2.5,1.5) rectangle (3.5,2.5);
\node[circle,fill,scale=0.3] at (0.5,0.5) (a) {};
\node[circle,fill,scale=0.3] at (1.5,0.5) (b) {};
\node[circle,fill,scale=0.3] at (2.5,0.5) (c) {};
\node[circle,fill,scale=0.3] at (3.5,0.5) (d) {};
\node[circle,fill,scale=0.3] at (4.5,0.5) (l) {};

\node[circle,fill,scale=0.3] at (0.5,1.5) (e) {};
\node[circle,fill,scale=0.3] at (1.5,1.5) (i) {};
\node[circle,fill,scale=0.3] at (2.5,1.5) (k) {};
\node[circle,fill,scale=0.3] at (3.5,1.5) (m) {};

\node[circle,fill,scale=0.3] at (0.5,2.5) (n) {};
\node[circle,fill,scale=0.3] at (1.5,2.5) (o) {};
\node[circle,fill,scale=0.3] at (2.5,2.5) (p) {};
\node[circle,fill,scale=0.3] at (3.5,2.5) (q) {};

\node[circle,fill,scale=0.3] at (0.5,3.5) (f) {};

\draw[red,<-] (l) --(d);
\draw[<-] (d)--(c);
\draw[<-] (c)--(b);
\draw[<-] (b)--(a);

\draw[<-] (k)--(i);
\draw[<-] (i)--(e);
\draw[red,<-] (m)--(k);

\draw[red,<-] (o)--(n);
\draw[red,<-] (p)--(o);
\draw[<-] (q)--(p);

\draw[->] (e)--(a);
\draw[->] (n)--(e);
\draw[red,->] (f)--(n);

\draw[->] (i)--(b);
\draw[->] (o)--(i);

\draw[->] (k)--(c);
\draw[red,->] (p)--(k);

\draw[red,->] (m)--(d);
\draw[->] (q)--(m);
\end{tikzpicture}, 
 \hspace{5mm} \begin{tikzpicture}[scale=0.6]
\node[circle,fill,scale=0.3] at (0.5,0.5) (a) {};
\node[circle,fill,scale=0.3] at (1.5,0.5) (b) {};
\node[circle,fill,scale=0.3] at (2.5,0.5) (c) {};
\node[circle,fill,scale=0.3] at (3.5,0.5) (d) {};
\node[circle,fill,scale=0.3] at (4.5,0.5) (l) {};

\node[circle,fill,scale=0.3] at (0.5,1.5) (e) {};
\node[circle,fill,scale=0.3] at (1.5,1.5) (i) {};
\node[circle,fill,scale=0.3] at (2.5,1.5) (k) {};
\node[circle,fill,scale=0.3] at (3.5,1.5) (m) {};

\node[circle,fill,scale=0.3] at (0.5,2.5) (n) {};
\node[circle,fill,scale=0.3] at (1.5,2.5) (o) {};
\node[circle,fill,scale=0.3] at (2.5,2.5) (p) {};
\node[circle,fill,scale=0.3] at (3.5,2.5) (q) {};

\node[circle,fill,scale=0.3] at (0.5,3.5) (f) {};

\draw[red,<-] (l) --(d);
\draw[<-] (d)--(c);
\draw[<-] (c)--(b);
\draw[<-] (b)--(a);

\draw[<-] (k)--(i);
\draw[<-] (i)--(e);
\draw[<-] (m)--(k);

\draw[red,<-] (o)--(n);
\draw[red,<-] (p)--(o);
\draw[red,<-] (q)--(p);

\draw[->] (e)--(a);
\draw[->] (n)--(e);
\draw[red,->] (f)--(n);

\draw[->] (i)--(b);
\draw[->] (o)--(i);

\draw[->] (k)--(c);
\draw[->] (p)--(k);

\draw[red,->] (m)--(d);
\draw[red,->] (q)--(m);
\end{tikzpicture}\]
and this generalises easily to arbitrary $n$ and $r$. The important thing to see from this picture is that the collection of dual arrows satisfying $b_{va}=1$ forms a path between the external vertices of the dual quiver. Therefore by Lemma \ref{lem:cd}, the set of labels are the partitions for a crossing diagram. 
\end{proof}

Claim~\ref{claim:crossing} implies that the weight matrix of $X_{\overline{Q}_{n,r}}$ contains all of the rows of the weight matrix $X_{P_{n,r}}$, although there may be additional rows (such as those covering from $k$-coverings for $k \geq 2$). 

We need now to show that $\overline{Q}_{n,r}$ is a Fano polytope with vertices that span the lattice: we already know that this holds for $P_{n,r}$. Note that the vertices of $\overline{Q}_{n,r}$ are primitive lattice vectors, as they are also vertices of the reflexive polytope $Q_{n,r}$. For every vertex $v_\lambda$ of $\overline{Q}_{n,r}$, there is at least one crossing diagram of the form in Claim~\ref{claim:crossing} which has $\lambda$ as a path, and this implies that there exist positive $c_\lambda$ such that 
\[
    0=\sum_{\lambda \in \Anr} c_\lambda v_\lambda
\]
Thus $0$ lies in the strict interior of $\overline{Q}_{n,r}$. To see that the vertices span the lattice, note that it suffices to check that the $m_\lambda$,~$\lambda \in \Anr$ span the lattice $N$: this follows from the explicit description of the $m_\lambda$ in \eqref{eqn:mlambda}. We leave the details here to the reader. It follows that $\overline{Q}_{n,r}$ is an $r(n-r)$-dimensional Fano polytope, with the same number of vertices as $P_{n,r}$. Therefore there can be no more relations between the vertices then those already found -- we have already found $V - r(n-r)$ relations, where $V$ is the number of vertices of $\overline{Q}_{n,r}$, and we know these relations are linearly independent because they are also relations for $P_{n,r}$ -- so the weight matrices are the same. 
\end{proof}
Since we are only interested in studying $P_{n,r}$ up to isomorphism, we can replace $P_{n,r}$ with $\overline{Q}_{n,r}$, and view $P_{n,r}$ as a polytope in $N$ with vertices indexed by $\lambda \in \Anr$.

Theorem~\ref{thm:forgetvertices} tells us that $Q_{n,r}$ is obtained from $P_{n,r}$ by adding vertices. This has the effect of blowing up the spanning fan $F_{P_{n,r}}$ of $P_{n,r}$ along the missing rays (in some order). Label the partitions in $\Bnr$ as $\lambda_1,\dots,\lambda_m$, and set $v_i:=v_{\lambda_i}\in N$.  Blowing up the spanning fan of $P_{n,r}$ (viewed as a polytope in $N_\RR$) repeatedly at $v_1,\dots,v_m$, we obtain a new fan, and hence a new toric variety, which was called $Y_{n,r}$ in the introduction. 

\begin{thm}\label{thm:vgit}There is a variation of GIT from the weak Fano variety $Y_{n,r}$ to $X_{Q_{n,r}}$. The anticanonical polytope of these two toric varieties are equal, and they have isomorphic Cox rings. In particular, $\Gamma(\mathcal{O}_{-K_{Y_{n,r}}})=\Gamma(\mathcal{O}_{-K_{X_{Q_{n,r}}}}).$
\end{thm}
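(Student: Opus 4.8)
The plan is to derive every assertion from a single structural observation: $Y_{n,r}$ and $X_{Q_{n,r}}$ are complete toric varieties with the \emph{same} set of rays in $N$. Indeed, by Theorem~\ref{thm:forgetvertices} I may regard $P_{n,r}$ as $\overline{Q}_{n,r}$, so the rays of the spanning fan of $P_{n,r}$ are exactly $\{v_\lambda : \lambda \in \Anr\}$. By construction $Y_{n,r}$ is obtained from this fan by iterated star subdivision at the excess vertices $v_\lambda$, $\lambda \in \Bnr$; these are distinct primitive vertices of the reflexive polytope $Q_{n,r}$, hence not proportional to any existing ray, so each subdivision genuinely introduces the ray through $v_\lambda$. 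Therefore the rays of $Y_{n,r}$ are precisely $\{v_\lambda : \lambda \in \Pnr\}$, which is also the vertex set of $Q_{n,r}$ and hence the ray set of $X_{Q_{n,r}}$.

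From here the Cox-ring and VGIT statements are formal. Two complete toric varieties with the same rays have the same total coordinate ring, namely the polynomial ring $\CC[x_\lambda : \lambda \in \Pnr]$ graded by the common class group $\ZZ^{\Pnr}/\image(M)$; in particular their Cox rings are canonically isomorphic. Both are moreover realized as GIT quotients $\CC^{\Pnr} /\!\!/_\omega K$ of the same affine space by the same torus $K = \Hom(\mathrm{Cl}, \CC^*)$, differing only in the stability condition $\omega$, and $Y_{n,r}$ is projective since it is obtained from the projective $X_{P_{n,r}}$ by blow-ups. Passing between the two relevant chambers of the secondary fan (as in \cite[Section~4]{crepantconjecture}) is the asserted variation of GIT.

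The anticanonical statements are equally direct. For any toric variety with these rays, $-K = \sum_{\lambda \in \Pnr} D_\lambda$ and the associated polyhedron $P_{-K} = \{ m \in M_\RR : \langle m, v_\lambda \rangle \geq -1 \text{ for all } \lambda \in \Pnr \}$ depends only on the rays; hence $P_{-K_{Y_{n,r}}} = P_{-K_{X_{Q_{n,r}}}} = Q_{n,r}^\vee$, which is exactly the $K$-equivalence. Since both varieties are complete, $\Gamma(\mathcal{O}(-K)) = \bigoplus_{m \in P_{-K} \cap M} \CC\, \chi^m$, so the equality of polytopes immediately yields $\Gamma(\mathcal{O}_{-K_{Y_{n,r}}}) = \Gamma(\mathcal{O}_{-K_{X_{Q_{n,r}}}})$.

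The remaining point, and the main obstacle, is to show that $Y_{n,r}$ is weak Fano, i.e.\ that $-K_{Y_{n,r}}$ is nef. Since $Q_{n,r}$ is reflexive, $X_{Q_{n,r}}$ is Gorenstein Fano with $-K$ ample, and $\mathrm{SpanFan}(Q_{n,r})$ is the normal fan of $Q_{n,r}^\vee = P_{-K_{Y_{n,r}}}$. Thus $-K_{Y_{n,r}}$ is nef exactly when the fan of $Y_{n,r}$ refines $\mathrm{SpanFan}(Q_{n,r})$, equivalently when $Y_{n,r} \to X_{Q_{n,r}}$ is a crepant morphism, in which case $-K_{Y_{n,r}}$ is the pullback of the ample class $-K_{X_{Q_{n,r}}}$. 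I would establish this refinement combinatorially from the crossing-diagram description underlying Theorem~\ref{thm:forgetvertices}: one checks that each maximal cone created by the star subdivisions has its rays contained in the vertex set of a single facet of $Q_{n,r}$, so that it sits inside a cone of $\mathrm{SpanFan}(Q_{n,r})$. Alternatively one can test nefness on torus-invariant curves, where each wall of the fan of $Y_{n,r}$ produces a primitive relation among the $v_\lambda$ whose coefficient sum equals $-K_{Y_{n,r}} \cdot C$; the covering relations $\sum v_{\lambda_i} = 0$, which span all relations and have positive coefficient sum, are what force these intersection numbers to be non-negative. Controlling the fan of $Y_{n,r}$ precisely enough to run either argument is where the detailed ladder-quiver combinatorics — in particular the passage between the facets of $\overline{Q}_{n,r}$ and those of $Q_{n,r}$ introduced by the excess vertices — must be used.
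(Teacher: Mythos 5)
Everything in your proposal up to the weak Fano claim coincides, in both content and method, with the paper's own proof: Theorem~\ref{thm:forgetvertices} gives that the fan of $Y_{n,r}$ and the spanning fan of $Q_{n,r}$ have the same rays $\{v_\lambda : \lambda \in \Pnr\}$, hence the two varieties are GIT quotients of the same $\CC^{\Pnr}$ by the same torus, differing only in the stability condition (this is the VGIT), their Cox rings agree because the Cox ring depends only on the rays, the anticanonical polyhedron $\{m : \langle m, v_\lambda\rangle \geq -1 \text{ for all } \lambda \in \Pnr\}$ depends only on the rays, and its lattice points give $\Gamma(\mathcal{O}_{-K_{Y_{n,r}}})=\Gamma(\mathcal{O}_{-K_{X_{Q_{n,r}}}})$. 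All of that is correct and is exactly what the paper does.

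The gap is the nefness of $-K_{Y_{n,r}}$, which you yourself call ``the main obstacle'' and then do not prove. You correctly reduce it to the statement that every maximal cone of the blow-up fan has its ray generators contained in a single face of $Q_{n,r}$, i.e.\ that the fan of $Y_{n,r}$ refines the spanning fan of $Q_{n,r}$; but you only sketch two strategies and explicitly defer the ladder-quiver combinatorics needed to run either, so as written this part is a plan, not a proof. Note the contrast with the paper, which dispatches this point in one line: since the rays of $Y_{n,r}$ are precisely the vertices of the convex polytope $Q_{n,r}$, its anticanonical polytope equals that of $X_{Q_{n,r}}$, and nefness is asserted to follow from convexity of that polytope. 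Your instinct that this is where the real content lies is sound --- in general a complete fan whose rays are exactly the vertices of a convex polytope need \emph{not} refine that polytope's face fan (star subdivision at a new vertex that ``sees'' several facets of the old polytope already breaks it, as do Fulton-type non-projective examples), and when refinement fails $-K$ is not nef --- but having identified the correct criterion you must still verify it for $Y_{n,r}$, e.g.\ via your first strategy of showing each cone created by the subdivisions lies over a facet of $Q_{n,r}$. Your second strategy is moreover flawed as stated: the wall relations of the fan of $Y_{n,r}$ are integer combinations of the covering relations, and non-negativity of coefficient sums is not preserved under such combinations (indeed the covering relations are blind to the fan structure, which is precisely what must be controlled), so positivity of the covering relations alone cannot force $-K_{Y_{n,r}}\cdot C \geq 0$ on every wall curve.
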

\begin{proof}
It follows from Theorem \ref{thm:forgetvertices} that the GIT description of $Y_{n,r}$ and $X_{Q_{n,r}}$ has the same weight matrix -- i.e.~the fans have the same rays. The difference lies entirely in the choice of stability condition, which gives the different fan structure. The dual of the anticanonical polytope is the polytope spanned by primitive generators of the rays of the fan, so the anticanonical polytope of both toric varieties agree.  In particular since this polyhedron is convex, we have that the anticanonical bundle of $Y_{n,r}$ is nef; that is, $Y_{n,r}$ is weak Fano. The Cox ring of a toric variety depends only on the rays of the fan, so the last two claims follow immediately.
\end{proof}
\section{Smoothing} \label{sec:smoothing}
In this section, we will discuss (partial) smoothings of $X_{P_{n,r}}/G$ and of $Y_{n,r}/G$.
\subsection{Smoothing $X_{P_{n,r}}/G$}
Although the group action of $G$ on $X_{P_{n,r}}$ looks very toric and unconnected to the Grassmannian, remarkably, when smoothing, we obtain a very natural group action on $\Gr(n,r)$, which generalizes the Greene--Plesser  group action for projective space.  Recall that
\begin{equation} \tilde{H}_{n,r}=\langle (\zeta_1,\dots,\zeta_{n}) \mid \zeta_i^n=1, \, (\prod \zeta_i)^r=1 \rangle\end{equation}
acts naturally on $\Mat(r \times n)$ and descends to an effective action of $H_{n,r}$ on $\Gr(n,r)$. The group $H_{n,r}$ is the quotient
\[\tilde{H}_{n,r}/((\zeta,\dots,\zeta) \in \tilde{H}_{n,r}).\]

\subsubsection{The action of $G$ on $X_{P_{n,r}}$}
It follows from Lemma \ref{lem:Gdes} that $G \cong (\ZZ/n\ZZ)^{rk-1}$, where as before, $k=n-r$. The group action of $G$ on $X_{P_{n,r}}$ can be described as a quotient of a larger group action on $\CC^{|\LQ_1|}$, recalling that $X_{P_{n,r}}=\CC^{|LQ_1|}/\!/ T$, $T=(\CC^*)^{|\overline{\LQ}_0|}$. The group 
\[\tilde{G}:=\{ (\psi_a)_{a \in \LQ_1} \mid \psi_a^n=1, \, \prod_{a} \psi_a =1 \}\]
acts naturally via $\SL(|\LQ_1|)$ on $\CC^{|\LQ_1|}$. The action descends to give an effective action of the quotient
\[ G=\tilde{G}/ (T \cap \tilde{G})\]
on the toric variety $X_{P_{n,r}}$. To compute the intersection, we view both $T$ and $\tilde{G}$ as diagonal subgroups of $\GL(|\LQ_1|)$. Recall from \eqref{eq:weightmatrix} that the weight matrix of the toric variety is $B:=[b_{va}]_{v \in \overline{\LQ}_0, a \in \LQ_1}$.  The intersection $T \cap \tilde{G}$
is generated by
\begin{equation}\label{eqn:generators} \{(\psi^{b_{va}})_a \mid \psi^n=1, v \in \overline{\LQ}_0\} \subset \tilde{G}.\end{equation}

\begin{rem} One way to see that this group action agrees with the action described in Lemma \ref{lem:Gdes} is to convert the fan description of $X_{Q_{n,r}^\vee}$ to a GIT description. Recall that the vertices of $X_{Q_{n,r}^\vee}$ are indexed by arrows of the ladder diagram, and that $Q_{n,r}^\vee$ can be obtained from the polytope of a divisor of $X_{Q_{n,r}}$, namely the divisor associated to the empty partition.  Let $m_a$,~$a \in \LQ_1$, denote the vertices of the polytope of this divisor. Then the vertices of $Q_{n,r}^\vee$ are given by $w_a:=n m_a-h$, where $h$ is a (fixed) primitive lattice vector and $a \in \LQ_1$. 

The rays of the fan of $X_{Q_{n,r}^\vee}$ do not generate the ambient lattice. To convert to the GIT description, we must add virtual rays -- rays that appear in no cones of the fan -- so that all the rays together generate the lattice. It suffices to add all of the $m_a$ as virtual rays. Then in addition to the weights of $X_{P_{n,r}}$, we also obtain gradings coming from the relations 
\[w_a+h-n m_a=0.\]
Choosing an appropriate stability condition allows us to translate these gradings into \emph{quotient gradings}, which exactly give the action of $\tilde{G}$ on $\CC^{|\LQ_1|}$. 
\end{rem}
\subsubsection{The action of $G$ on $\PP^{\binom{n}{r}-1}$}
The embedding of $X_{P_{n,r}}$ into  $\PP^{\binom{n}{r}-1}$ allows us to extend the action of $G$ to  $\PP^{\binom{n}{r}-1}$. Label the coordinates of  $\PP^{\binom{n}{r}-1}$ by $p_\lambda$,~$\lambda \in \Pnr$. These are of course the Pl\U cker coordinates -- recall that they can be indexed either by partitions in $\Pnr$ or by size-$r$ subsets of $\{1,\dots,n\}$ (Definition \ref{def:indexing}). For a partition $\lambda \in \Pnr$, $I_\lambda$ denotes the corresponding subset. 

 The embedding is given by the $\binom{n}{r}$ sections of $\mathcal{O}(1)$ on $X_{P_{n,r}}$, i.e.
\[\prod_{a \in \lambda} y_a, \: \lambda \in \Pnr.\]
The group $\tilde{G}$  acts on the coordinate $p_\lambda$ by multiplication by $\prod_{a \in \lambda} \psi_a.$ Either by observing that it is true by construction of the embedding, or by explicitly checking, it is easy to see that the action of $\tilde{G}$ descends to an action of $G$ on  $\PP^{\binom{n}{r}-1}$. That is, elements of intersection $T \cap \tilde{G}$ act by scaling.

\subsubsection{Degenerate Pl\U cker relations} The toric variety $X_{P_{n,r}}$ is cut out of  $\PP^{\binom{n}{r}-1}$ by degenerate Pl\U cker relations \cite{flagdegenerations}.
\begin{mydef} Let $\prec$ be the usual partial order on partitions/Pl\U cker coordinates:
for $\lambda$,~$\sigma \in \Pnr$,  $\lambda \prec \sigma$ if $\lambda_i \leq \sigma_i$ for all $i$. Here we use zeroes to extend any partition in $\Pnr$ to a length-$r$ partition. 
\end{mydef}
Following \cite{flagdegenerations}, there is a relation for every incomparable pair of partitions $\lambda$,~$\sigma$:
\[p_\sigma p_\lambda -p_{\sigma \wedge \lambda} p_{\sigma \vee \lambda}.\]
Here $\sigma \wedge \lambda$ is the minimal partition $\mu$ satisfying $\lambda, \sigma \prec \mu$ and $\sigma \vee \lambda$ is the maximal partition $\nu$ satisfying $\nu \prec \lambda,\sigma.$
\begin{eg}
 We give an example for $Gr(7,3)$. The partitions $\sigma=(1,1,1)$ and $\lambda=(4,2)$ are incomparable. We draw them in blue and green respectively below.  
\[\begin{tikzpicture}[scale=0.6]
\draw (0,0) rectangle (1,1);
\draw (0,1) rectangle (1,2);
\draw (0,2) rectangle (1,3);
\draw[] (1,1) rectangle (2,2);
\draw[] (1,0) rectangle (2,1);
\draw[] (1,2) rectangle (2,3);
\draw[] (2,1) rectangle (3,2);
\draw[] (2,2) rectangle (3,3);
\draw[] (2,0) rectangle (3,1);
\draw[] (3,0) rectangle (4,1);
\draw[] (3,1) rectangle (4,2);
\draw[] (3,2) rectangle (4,3);
\draw[fill] (0,0) circle (2pt);
\draw[fill] (1,1) circle (2pt);
\draw[fill] (2,1) circle (2pt);
\draw[fill] (3,1) circle (2pt);
\draw[fill] (1,2) circle (2pt);
\draw[fill] (2,2) circle (2pt);
\draw[fill] (3,2) circle (2pt);
\draw[fill] (4,3) circle (2pt);

\draw[blue,thick] (0,0)--(1,0)--(1,1)--(1,2)--(1,3)--(4,3);
\draw[green,thick] (0,0)--(0,1)--(2,1)--(2,2)--(4,2)--(4,3);
\end{tikzpicture}\]
The partition $\sigma \vee \lambda$ is drawn in red and $\sigma \wedge \lambda$ in yellow. 
\[\begin{tikzpicture}[scale=0.6]
\draw (0,0) rectangle (1,1);
\draw (0,1) rectangle (1,2);
\draw (0,2) rectangle (1,3);
\draw[] (1,1) rectangle (2,2);
\draw[] (1,0) rectangle (2,1);
\draw[] (1,2) rectangle (2,3);
\draw[] (2,1) rectangle (3,2);
\draw[] (2,2) rectangle (3,3);
\draw[] (2,0) rectangle (3,1);
\draw[] (3,0) rectangle (4,1);
\draw[] (3,1) rectangle (4,2);
\draw[] (3,2) rectangle (4,3);
\draw[fill] (0,0) circle (2pt);
\draw[fill] (1,1) circle (2pt);
\draw[fill] (2,1) circle (2pt);
\draw[fill] (3,1) circle (2pt);
\draw[fill] (1,2) circle (2pt);
\draw[fill] (2,2) circle (2pt);
\draw[fill] (3,2) circle (2pt);
\draw[fill] (4,3) circle (2pt);

\draw[red,thick] (0,0)--(0,1)--(1,1)--(1,2)--(1,3)--(4,3);
\draw[yellow,thick]  (0,0)--(1,0)--(1,1)--(2,1)--(2,2)--(4,2)--(4,3);
\end{tikzpicture}\]
\end{eg}
\begin{lem} The degenerate Pl\U cker relations are homogeneous with respect to the induced action of $G$ on $\PP^{\binom{n}{r}-1}$. 
\end{lem}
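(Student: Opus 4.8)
The plan is to reduce $G$-homogeneity to a single monomial identity in the Cox ring, which is supplied \emph{for free} by the fact that the degenerate Pl\U cker relations cut out $X_{P_{n,r}}$.

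First I would use the two compatible descriptions of the $\tilde{G}$-action. On the Cox ring $\CC[y_a : a\in\LQ_1]$ the group $\tilde{G}=\{(\psi_a) : \psi_a^n=1,\ \prod_a\psi_a=1\}$ acts by $y_a\mapsto\psi_a y_a$, while on $\PP^{\binom{n}{r}-1}$ it multiplies $p_\mu$ by $\prod_{a\in\mu}\psi_a$. These are compatible: the embedding sends $p_\mu\mapsto s_\mu:=\prod_{a\in\mu}y_a$, so $p_\mu\mapsto s_\mu$ is $\tilde{G}$-equivariant, $s_\mu$ being a $\tilde{G}$-eigenvector of weight $\prod_{a\in\mu}\psi_a$. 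Hence $p_\sigma p_\lambda$ and $p_{\sigma\wedge\lambda}p_{\sigma\vee\lambda}$ are $\tilde{G}$-eigenvectors, and the binomial $p_\sigma p_\lambda-p_{\sigma\wedge\lambda}p_{\sigma\vee\lambda}$ is a $\tilde{G}$-semi-invariant precisely when the two eigenvalues agree, i.e.\ when
\[
\prod_{a\in\sigma}\psi_a\prod_{a\in\lambda}\psi_a=\prod_{a\in\sigma\wedge\lambda}\psi_a\prod_{a\in\sigma\vee\lambda}\psi_a\qquad\text{for all }(\psi_a)\in\tilde{G}.
\]

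The key step is to produce this equality of characters, and the point is that it is equivalent to the single monomial identity $s_\sigma s_\lambda=s_{\sigma\wedge\lambda}s_{\sigma\vee\lambda}$ in $\CC[y_a]$. But this identity is exactly the assertion that the degenerate Pl\U cker relation lies in the toric ideal of $X_{P_{n,r}}$ under the Pl\U cker embedding, i.e.\ that it vanishes on $X_{P_{n,r}}$ --- which is how these relations are defined in \cite{flagdegenerations}. Concretely, writing $s_\sigma s_\lambda=\prod_a y_a^{[a\in\sigma]+[a\in\lambda]}$, the identity records the multiset equality of arrows used by the paths of $\sigma,\lambda$ and those of $\sigma\wedge\lambda,\sigma\vee\lambda$. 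Applying the $\tilde{G}$-equivariance of $p_\mu\mapsto s_\mu$ to this identity yields the displayed character equality immediately. Finally, a semi-invariant binomial cuts out a $\tilde{G}$-invariant hypersurface of $\PP^{\binom{n}{r}-1}$; since the $G$-action is induced through the surjection $\tilde{G}\twoheadrightarrow G=\tilde{G}/(T\cap\tilde{G})$, the hypersurface --- and hence the relation --- is $G$-homogeneous, as required.

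The only substantive content is the monomial identity $s_\sigma s_\lambda=s_{\sigma\wedge\lambda}s_{\sigma\vee\lambda}$, so the main obstacle is really just whether to invoke it as the defining property of the Gelfand--Cetlin degeneration or to prove it combinatorially from scratch. A self-contained proof would identify $[a\in\text{path}(\mu)]$ as a modular function on the distributive lattice $\Pnr$ --- equivalently, observe that the paths of $\sigma\wedge\lambda$ and $\sigma\vee\lambda$ are the lower and upper lattice-path envelopes of the paths of $\sigma$ and $\lambda$, so that passing to meet and join merely swaps the two path-arcs in each region where the paths diverge, leaving the multiset of arrows (hence each $y_a$-exponent) unchanged. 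Some care is needed there, since a single arrow may straddle a point where the two paths touch; but because we may instead quote that the degenerate Pl\U cker relations define $X_{P_{n,r}}$, this combinatorial verification is not logically required.
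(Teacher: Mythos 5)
Your proof is correct, and its mathematical core coincides with the paper's: both reduce $G$-homogeneity to the statement that the multiset of arrows used by the paths of $\sigma$ and $\lambda$ equals the multiset used by $\sigma\wedge\lambda$ and $\sigma\vee\lambda$ --- which is exactly your monomial identity $s_\sigma s_\lambda=s_{\sigma\wedge\lambda}s_{\sigma\vee\lambda}$. The difference lies in how this identity is justified. The paper asserts the arrow-multiset equality directly as a combinatorial fact about lattice paths and concludes in one line; you instead derive it from the cited fact (from \cite{flagdegenerations}) that the degenerate Pl\"ucker relations cut out $X_{P_{n,r}}$: since both terms pull back to monomials in the $y_a$ under $p_\mu\mapsto s_\mu$, membership in the toric ideal forces the two monomials to be literally equal. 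Your route buys a proof with no combinatorial verification at all, at the price of invoking the degeneration theorem (which the paper does quote, so this is not circular); the paper's argument is self-contained but leaves the path-combinatorics to the reader, as your sketched ``envelope'' argument fills in. One small imprecision: you say the character equality on $\tilde{G}$ is \emph{equivalent} to the monomial identity, but only the implication from the monomial identity to the character equality holds (characters of the finite group $\tilde{G}$ see exponents only modulo $n$ and modulo the relation $\prod_a\psi_a=1$); since that is the only direction you use, the argument stands.
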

\begin{proof} Let $\sigma$,~$\lambda$ be an incomparable pair. The set of arrows contained in the union of $\sigma$ and $\lambda$ is the same as the set of arrows contained in the union of $\sigma \vee \lambda$ and $\sigma \wedge \lambda$. Therefore $G$ acts with the same weights on $p_\sigma p_\lambda$ and $p_{\sigma \wedge \lambda} p_{\sigma \vee \lambda}.$
\end{proof}
\subsubsection{The group preserving the Pl\U cker relations}
Consider the Pl\U cker relation for $\Gr(4,2)$:
\[p_{\emptyset} p_{\yng(2,2)} -p_{\yng(1)} p_{\yng(2,1)} +p_{\yng(2)} p_{\yng(1,1)},\]
which degenerates to
\[0 \cdot p_{\emptyset} p_{\yng(2,2)} -p_{\yng(1)} p_{\yng(2,1)} +p_{\yng(2)} p_{\yng(1,1)}.\]
The second equation is homogeneous under the action of $G$, but the first is not. However, it is homogeneous with respect to a subgroup of $G$.
\begin{mydef} Let $G_h$ be the subgroup of $G$ that preserves the Grassmannian, i.e. $G_h$ is the largest subgroup of $G$ such that all Pl\U cker relations are homogeneous with respect to this action. 
\end{mydef}
Before describing $G_h$ precisely, let's describe it heuristically. Binomial Pl\U cker relations are of the form:
\[p_\sigma p_\lambda+\sum a_{\alpha,\beta} p_\alpha p_\beta =0.\]
For a complete description of these relations, see \cite{combinatorial}. A term $p_\alpha p_\beta$ appears with non-zero coefficient only if  the multi-sets $I_\sigma \cup I_\lambda$ and $I_\alpha \cup I_\beta$ agree. In other words, the terms that appear in the sum must just be re-arrangements of the indices of $I_\sigma$ and $I_\lambda$. The Pl\U cker relations are not homogeneous with respect to the action of $G$ because this action feels the paths of the partitions, which is more precise than the location of the vertical steps (which gives the underlying subset). We are looking for a subgroup $G_h$ whose action depends only on the indices. We will show that $G_h$ is isomorphic to $H_{n,r}$. The first step is to define a map from $H_{n,r}$ to $G$. 

Consider the following labeling of the vertical steps of a ladder diagram: 

 \[\begin{tikzpicture}[scale=0.8]
\draw (0,0) rectangle (1,1);
\draw (0,1) rectangle (1,2);
\draw (1,1) rectangle (2,2);

\draw (1,0) rectangle (2,1);
\draw (2,1) rectangle (3,2);

\draw (2,0) rectangle (3,1);
\draw[fill] (0,0) circle (2pt);
\draw[fill] (1,1) circle (2pt);
\draw[fill] (2,1) circle (2pt);
\draw[fill] (3,2) circle (2pt);

\node at (-0.2,0.5) (A) {\tiny 1};
\node at (-0.2,1.5) (A1) { \tiny 2};
\node at (0.8,0.5) (B) { \tiny 2};
\node at (0.8,1.5) (B1) { \tiny 3};
\node at (1.8,0.5) (C) { \tiny 3};
\node at (1.8,1.5) (C1) { \tiny 4};
\node at (2.8,0.5) (D) { \tiny 4};
\node at (2.8,1.5) (D1) { \tiny 5};
\end{tikzpicture}\]
\Yboxdim{6pt}
The labeling allows to assign a subset of $\{1,\dots,n\}$ to an arrow or path in the ladder diagram by recording the labels along the arrow or path. For example, the path corresponding to the partition $\yng(1)$ in the $\Gr(2,5)$ diagram has labeling $\{1,3\}$. Some arrows are associated to the empty set -- in this example, the internal horizontal arrow has no labels. 
\Yboxdim{3pt}

The following remark is key to establishing the next two lemmas. 
\begin{rem}The labeling of a path $\lambda$ is $I_\lambda$.  
\end{rem}

Use the labeling to define a morphism 
\[\tilde{\Psi}: \tilde{H}_{n,r} \to \tilde{G}, \:\: (\zeta_i)_{i=1}^n \mapsto \left(\prod_{\substack{\text{$j$ a label} \\ \text{of $a$}}} \zeta_j\right)_{a \in \LQ_1}.\]
\begin{lem}The map $\tilde{\Psi}$ descends to an injective map $\psi: H_{n,r} \to G.$
\end{lem}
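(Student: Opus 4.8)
The plan is to show two things: first that the map $\tilde{\Psi}$ sends the subgroup $(\zeta,\dots,\zeta)\in\tilde{H}_{n,r}$ into the subgroup $T\cap\tilde{G}$ (so that $\tilde{\Psi}$ descends to a well-defined map $\psi$ on the quotients), and second that the descended map is injective. Throughout I would work with the explicit generators already identified in the excerpt: the intersection $T\cap\tilde{G}$ is generated by the elements $\{(\psi^{b_{va}})_a \mid \psi^n=1,\ v\in\overline{\LQ}_0\}$ from \eqref{eqn:generators}, and the defining data of $\tilde{\Psi}$ records, for each arrow $a\in\LQ_1$, the product of $\zeta_j$ over the labels $j$ appearing along $a$.

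For well-definedness, I would evaluate $\tilde{\Psi}$ on the central element $(\zeta,\dots,\zeta)$. Its $a$-th component is $\zeta^{\ell(a)}$, where $\ell(a)$ is the number of labels on the arrow $a$. The key combinatorial observation is that $\ell(a)$ equals the number of vertical steps along $a$, and that the vector $(\ell(a))_{a\in\LQ_1}$ is precisely a row of the weight matrix $B$ — indeed it is the weight coming from the single vertex $v$ recording total vertical displacement, or a nonnegative integer combination of the rows $b_{v\cdot}$. Concretely, since every path from source to target crosses exactly one arrow in $\{a : b_{va}=1\}$ for each $v$ (as established just after Example \ref{eg:1set}), summing over the relevant $v$ recovers the count of vertical steps. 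Thus $\tilde{\Psi}(\zeta,\dots,\zeta)$ is a product of the generators \eqref{eqn:generators}, hence lies in $T\cap\tilde{G}$, so $\tilde{\Psi}$ descends to $\psi:H_{n,r}\to G$.

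For injectivity, suppose $(\zeta_i)\in\tilde{H}_{n,r}$ maps into $T\cap\tilde{G}$; I must show it lies in the central subgroup $(\zeta,\dots,\zeta)$. Using the generator description, $\tilde{\Psi}(\zeta_i)=\prod_v (\psi_v^{b_{va}})_a$ for some $n$-th roots of unity $\psi_v$. Comparing components arrow-by-arrow, and exploiting the remark that the labeling of a path $\lambda$ is exactly $I_\lambda$ together with the fact that the label on a \emph{single} vertical arrow in column $i$ (respectively $i{+}1$ in its two rows) isolates the index $i$, I would solve for consecutive ratios $\zeta_{i+1}/\zeta_i$ in terms of the $\psi_v$. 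The arrow-to-label dictionary is rigid enough that these constraints force all $\zeta_i$ to be equal up to the relations already imposed; the cleanest route is to read off from the vertical arrows (whose labels are singletons or consecutive pairs, by the labeling picture) that the image determines each $\zeta_i$ only through $\zeta_{i+1}/\zeta_i$, and that membership in $T\cap\tilde{G}$ kills exactly these ratios.

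The main obstacle I anticipate is the injectivity step: translating ``the image lies in $T\cap\tilde{G}$'' into a clean system of equations on the $\zeta_i$ and verifying the kernel is no larger than the central diagonal. This requires pinning down precisely which arrows carry which single index as a label — that is, making rigorous the claim that the per-arrow label data recovers the $\zeta_i$ up to the overall scaling. I would handle this by noting that each index $i\in\{1,\dots,n\}$ labels a contiguous ``staircase'' of vertical arrows in the diagram, so that the constraints $\tilde{\Psi}(\zeta)\in T\cap\tilde{G}$ are equivalent to finitely many relations among the $\zeta_i$ that precisely cut out the image of the diagonal; a careful but elementary bookkeeping argument, best presented via the $\Gr(2,5)$ picture already drawn and then generalized, should close the gap.
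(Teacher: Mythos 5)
Your well-definedness step is workable but your specific claim is false as stated: the vector $(\ell(a))_{a}$ of label counts is in general \emph{not} a row of $B$ (rows of $B$ are $0$--$1$ vectors with exactly $n$ ones, while for $\Gr(5,2)$ one computes $\ell=(1,1,1,2,2,0,1,1,1)$, which is the \emph{sum} of the rows indexed by the internal vertex $(1,1)$ and the sink). The correct statement, which does suffice, is that $\ell(a)$ is the height difference $h(t(a))-h(s(a))$, because the labelled steps of an arrow are exactly its vertical steps; hence $(\ell(a))_a=\sum_{v\in\overline{\LQ}_0} h(v)\,d^{(v)}$, where $d^{(v)}_a$ denotes the coefficient of $e_v$ in $d_a=-e_{s(a)}+e_{t(a)}$. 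Since $\{f_v\}$ is another basis of the same lattice, the $\ZZ$-span of the rows $d^{(v)}$ equals the $\ZZ$-span of the rows of $B$, so $(\ell(a))_a$ is an integer combination of rows of $B$ and $\tilde{\Psi}(\zeta,\dots,\zeta)=(\zeta^{\ell(a)})_a$ is indeed a product of the generators \eqref{eqn:generators}. (Nonnegativity of the coefficients is irrelevant, as the generating set is closed under inversion.) This is an honest alternative to the paper, which instead gets descent from the fact that $T\cap\tilde{G}$ acts by scaling on $\PP^{\binom{n}{r}-1}$ together with effectiveness of the $G$-action on $X_{P_{n,r}}$.

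The injectivity step, however, contains a genuine gap --- the one you yourself flag --- and it is the heart of the lemma. Membership of $\tilde{\Psi}((\zeta_i))$ in $T\cap\tilde{G}$ means there exist $n$-th roots of unity $\psi_v$ with $\prod_{j\in L(a)}\zeta_j=\prod_v\psi_v^{b_{va}}$ for every arrow $a$, where $L(a)$ is the label set of $a$. The right-hand side varies from arrow to arrow, so reading off singleton labels on vertical arrows yields equations such as $\zeta_3=\psi_{(1,1)}\psi_{(2,1)}$ in $\Gr(5,2)$, not relations among the $\zeta_i$: before you can ``solve for the ratios $\zeta_{i+1}/\zeta_i$'' you must eliminate the unknown $\psi_v$, and your staircase heuristic never does this. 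The idea that closes the gap --- and it is exactly the paper's proof --- is to pass from single arrows to source-to-sink paths. Since every such path contains precisely one arrow of $\{a: b_{va}=1\}$ for each $v$ (stated after Example \ref{eg:1set}), every element of $T\cap\tilde{G}$ has path-product $\prod_v\psi_v$ along \emph{every} path; equivalently, it scales all Pl\"ucker coordinates $p_\lambda=\prod_{a\in\lambda}y_a$ by one common factor. Meanwhile, by the remark that the labels along $\lambda$ are exactly $I_\lambda$, the path-product of $\tilde{\Psi}((\zeta_i))$ along $\lambda$ is $\prod_{j\in I_\lambda}\zeta_j$. The hypothesis therefore forces $\prod_{j\in I_\lambda}\zeta_j$ to be independent of the size-$r$ subset $I_\lambda$, and comparing two subsets differing in a single element gives $\zeta_i=\zeta_j$ for all $i,j$, i.e.\ the kernel is the diagonal. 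An arrow-level elimination can be pushed through by hand in small cases (I checked $\Gr(5,2)$), but nothing in your sketch supplies the general argument, whereas the path-level observation finishes it in two lines.
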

\begin{proof} Recall that $G$ acts on $\PP^{\binom{n}{r}-1}$.  We know that $T \cap \tilde{G}$ acts on $\PP^{\binom{n}{r}-1}$ by scaling, so it suffices to show that for $h \in \tilde{H}_{n,r}$,
\[h=(\zeta,\dots,\zeta) \Leftrightarrow \tilde{\Psi}(h) \text{ acts by scaling on } \PP^{\binom{n}{r}-1}.\]
Note that 
\[\tilde{\Psi}((\zeta_i)_{i=1}^n)\cdot \left[p_\lambda: \lambda \in \Pnr\right]=\left[\left(\prod_{j \in I_\lambda} \zeta_j\right) p_\lambda: \lambda \in \Pnr\right].\]
Since $I_\lambda$ runs over all size $r$ subsets of $\{1,\dots,n\}$, the only way to ensure that 
\[\prod_{j \in I_\lambda} \zeta_j =\prod_{j \in I_\mu} \zeta_j \]
for all $\lambda$,~$\mu$ is for $\zeta_i=\zeta_j$ for all $i$,~$j$. 
\end{proof}

\begin{lem}\label{lem:imagepreserves} The image of $H_{n,r}$ under $\Psi$ preserves the Grassmannian, that is, 
\[ \Psi(H_{n,r}) \subseteq G_h.\]
\end{lem}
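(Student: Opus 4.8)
The plan is to show that for every $h \in H_{n,r}$ the induced action of $\Psi(h)$ on $\PP^{\binom{n}{r}-1}$ scales each Pl\U cker coordinate $p_\lambda$ by a factor that depends only on the subset $I_\lambda$, and then to read off homogeneity of each Pl\U cker relation directly from its combinatorial shape.

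First I would compute the weight of the action. Lift $h$ to $(\zeta_i)_{i=1}^n \in \tilde{H}_{n,r}$. By definition of $\tilde{\Psi}$, the element $\tilde{\Psi}(h)=(\psi_a)_{a \in \LQ_1}$ has $\psi_a=\prod_{j \text{ a label of } a}\zeta_j$, and $\tilde{G}$ acts on $p_\lambda$ by multiplication by $\prod_{a \in \lambda}\psi_a$. Hence
\[\prod_{a \in \lambda}\psi_a=\prod_{a \in \lambda}\;\prod_{\substack{j \text{ a label} \\ \text{of } a}}\zeta_j=\prod_{j \in I_\lambda}\zeta_j,\]
where the last equality is exactly the key remark that the labeling of the path $\lambda$ is $I_\lambda$. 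Thus $\Psi(h)$ scales $p_\lambda$ by $\prod_{j \in I_\lambda}\zeta_j$, a factor depending only on the size-$r$ subset $I_\lambda$ and not on the path $\lambda$ itself. (Since $T \cap \tilde{G}$ acts by scaling, this is well defined on $\PP^{\binom{n}{r}-1}$ up to a global factor, which is all that matters for homogeneity.)

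Next I would invoke the shape of the relations. Each (non-degenerate) Pl\U cker relation has the form $p_\sigma p_\lambda+\sum a_{\alpha,\beta}\,p_\alpha p_\beta=0$, and a term $p_\alpha p_\beta$ occurs with non-zero coefficient only when the multi-sets $I_\alpha \cup I_\beta$ and $I_\sigma \cup I_\lambda$ agree. Under $\Psi(h)$ the monomial $p_\sigma p_\lambda$ is scaled by $\prod_{j \in I_\sigma}\zeta_j\cdot\prod_{j \in I_\lambda}\zeta_j=\prod_{j \in I_\sigma \cup I_\lambda}\zeta_j$ (multi-set union), and likewise $p_\alpha p_\beta$ is scaled by $\prod_{j \in I_\alpha \cup I_\beta}\zeta_j$. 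As these two multi-sets coincide for every term of the relation, every monomial is scaled by the single common factor $\prod_{j \in I_\sigma \cup I_\lambda}\zeta_j$, so the relation is homogeneous with respect to $\Psi(h)$. Since this holds for all Pl\U cker relations and all $h \in H_{n,r}$, we get $\Psi(h)\in G_h$ by the definition of $G_h$, and therefore $\Psi(H_{n,r})\subseteq G_h$.

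The only real content is the identification of the weight of $\Psi(h)$ on $p_\lambda$ with $\prod_{j \in I_\lambda}\zeta_j$: this is precisely the index-only dependence that the full group $G$ lacks. The earlier discussion explains that $G$ fails to preserve the Pl\U cker relations because its action \emph{feels the path} of a partition, whereas $\Psi(H_{n,r})$ feels only the underlying index set $I_\lambda$. Once this is established, homogeneity follows immediately from the multi-set-preserving structure of the Pl\U cker relations, so there is no substantial obstacle beyond carefully bookkeeping the labels along a path. I would be careful to run this argument with the full Pl\U cker relations cutting out $\Gr(n,r)$, not their degenerate toric counterparts.
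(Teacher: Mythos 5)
Your proposal is correct and follows essentially the same route as the paper: compute that $\Psi(h)$ scales $p_\lambda$ by $\prod_{j \in I_\lambda}\zeta_j$ (using the remark that the labeling of the path $\lambda$ is $I_\lambda$), then observe that every monomial $p_\alpha p_\beta$ in a Pl\"ucker relation has the same multi-set $I_\alpha \cup I_\beta = I_\sigma \cup I_\lambda$ and hence the same scaling factor, so each relation is homogeneous. The only difference is cosmetic: you make explicit the label-bookkeeping step $\prod_{a\in\lambda}\psi_a=\prod_{j\in I_\lambda}\zeta_j$, which the paper delegates to the preceding remark and the injectivity lemma.
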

\begin{proof}
Consider a binomial Pl\U cker relation:
\[p_\sigma p_\lambda+\sum a_{\alpha,\beta} p_\alpha p_\beta =0,\]
where the term $p_\alpha p_\beta$ appears with non-zero coefficient only if  the multi-sets $I_\sigma \cup I_\lambda$ and $I_\alpha \cup I_\beta$ are equal. A group element $\tilde{\Psi}((\zeta_i))$ acts on  $p_\alpha p_\beta$ by multiplication by
\[\prod_{j \in I_\alpha} \zeta_j \prod_{j \in I_\beta} \zeta_j \]
which is equal to 
\[\prod_{j \in I_\lambda} \zeta_j \prod_{j \in I_\sigma} \zeta_j.\]
Therefore the relation is homogeneous with respect to the $\tilde{\Psi}(\tilde{G})$ action, and hence the $\Psi(G)$ action. 
\end{proof}
The main combinatorial result of this section is the next theorem.
\begin{thm}\label{thm:groupiso} There is an isomorphism $G_h \cong H_{n,r}$. 
\end{thm}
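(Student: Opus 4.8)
The plan is to upgrade the injection $\Psi\colon H_{n,r}\hookrightarrow G_h$ provided by the previous two lemmas to an isomorphism by proving the reverse inclusion $G_h\subseteq\Psi(H_{n,r})$. Fix $g\in G_h$ and lift it to $(\psi_a)_{a\in\LQ_1}\in\tilde G$. As in the previous lemmas, $g$ acts on $\PP^{\binom{n}{r}-1}$ by scaling the Pl\U cker coordinate $p_\lambda$ by $w_\lambda:=\prod_{a\in\lambda}\psi_a$; since $\lambda\mapsto I_\lambda$ is a bijection between $\Pnr$ and the size-$r$ subsets, I will write $w_I$ for this weight. Membership $g\in G_h$ means exactly that every Pl\U cker relation is homogeneous under this diagonal action, i.e. $w_{I_1}w_{I_2}=w_{J_1}w_{J_2}$ whenever $p_{I_1}p_{I_2}$ and $p_{J_1}p_{J_2}$ occur in a common Pl\U cker relation (forcing $I_1\sqcup I_2=J_1\sqcup J_2$ as multisets). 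The goal is to show that such a weight system necessarily has the product form $w_I=c\prod_{i\in I}\zeta_i$.

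The heart of the argument is an exchange computation using the three-term Pl\U cker relations. For a size-$(r-2)$ set $S$ and distinct $a,b,c,d\notin S$ one has the relation $p_{Sab}p_{Scd}-p_{Sac}p_{Sbd}+p_{Sad}p_{Sbc}=0$, so homogeneity forces $w_{Sab}w_{Scd}=w_{Sac}w_{Sbd}=w_{Sad}w_{Sbc}$. Reading the first equality as $w_{(Sa)b}/w_{(Sa)c}=w_{(Sd)b}/w_{(Sd)c}$ shows that the exchange ratio $\rho_{b,c}(T):=w_{T\cup b}/w_{T\cup c}$ (for $|T|=r-1$, $b,c\notin T$) is unchanged when the base $T$ is altered by a single swap $a\leftrightarrow d$. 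Since any two admissible bases are connected by such swaps, $\rho_{b,c}(T)=:\rho_{b,c}$ is independent of $T$; choosing a common base then gives the cocycle identity $\rho_{a,b}\rho_{b,c}=\rho_{a,c}$, so there exist $\zeta_1,\dots,\zeta_n\in\CC^*$, unique up to a common scalar, with $\rho_{a,b}=\zeta_a/\zeta_b$. For adjacent subsets $I=K\cup\{a\}$, $J=K\cup\{b\}$ we get $w_I/w_J=\zeta_a/\zeta_b$, i.e. $w_I\big/\prod_{i\in I}\zeta_i=w_J\big/\prod_{i\in J}\zeta_i$; as the single-swap graph on size-$r$ subsets is connected, this ratio is a constant $c$ and $w_I=c\prod_{i\in I}\zeta_i$, as desired. (When $k=n-r\le1$ the variety is $\PP^{n-1}$, the classical Greene--Plesser situation; small $r$ follows from the duality $\Gr(n,r)\cong\Gr(n,k)$.)

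It remains to place $(\zeta_i)$ in $\tilde H_{n,r}$ and to identify $g$. Because $\psi_a^n=1$ we have $w_I^n=1$, so the $\zeta_i$ and $c$ may be taken to be $n$-th roots of unity. Evaluating $\prod_{a\in\LQ_1}\psi_a$ along the $n$ paths of a crossing diagram (a $1$-covering, in which each index of $\{1,\dots,n\}$ occurs in exactly $r$ of the associated subsets) gives $\prod_a\psi_a=c^{\,n}\big(\prod_i\zeta_i\big)^r$; since $\prod_a\psi_a=1$ and $c^n=1$ this yields $(\prod_i\zeta_i)^r=1$, so $(\zeta_i)\in\tilde H_{n,r}$. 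Finally $g$ and $\Psi([(\zeta_i)])$ act on $\PP^{\binom{n}{r}-1}$ with proportional weight vectors, so their ratio lies in $\tilde G$ and acts by scaling, hence lies in $T\cap\tilde G$; thus $g=\Psi([(\zeta_i)])$ in $G$. This proves $G_h\subseteq\Psi(H_{n,r})$, and hence $G_h\cong H_{n,r}$.

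I expect the main obstacle to be the passage in the middle paragraph from the local exchange identities to the global product structure: one must check that the three-term relations alone (rather than the full straightening law) already pin down the weights, that the relevant bases and subsets are genuinely connected under single swaps, and that the degenerate ranges of $(n,r)$ are covered by the $\PP^{n-1}$ case. The bookkeeping in the last paragraph---the root-of-unity normalisation and the ``each index occurs $r$ times'' count for a crossing diagram, which produces precisely the determinant-type condition $(\prod_i\zeta_i)^r=1$ cutting $\tilde H_{n,r}$ out of $\tilde G$---is routine but must be handled with care, since this condition cannot be removed by rescaling.
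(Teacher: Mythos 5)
Your proposal is correct, and it takes a genuinely different route from the paper's proof. The paper argues at the level of lifts: given $[g]\in G_h$ it first normalizes a lift $(\psi_a)_{a\in\LQ_1}\in\tilde G$ within its $(T\cap\tilde G)$-orbit along two distinguished collections of arrows $C_1,C_2$ of the ladder quiver (this is a separate Claim inside the proof), and then uses only the vertex-local $2\times 2$ three-term Pl\"ucker relations to propagate down the columns of the ladder diagram, concluding that the normalized lift literally equals the element of $\tilde\Psi(\tilde H_{n,r})$ prescribed by its $C_1$-coordinates. You instead work with the induced weight vector $(w_I)$ on Pl\"ucker coordinates, which is insensitive to the choice of lift up to an overall scalar, so the orbit-normalization step disappears entirely; the classical exchange/cocycle argument forces the product form $w_I=c\prod_{i\in I}\zeta_i$, and the $1$-covering identity converts $\prod_a\psi_a=1$ into the condition $(\prod_i\zeta_i)^r=1$. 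Your route is shorter and more classical (it is the standard determination of the diagonal symmetries of the Plücker embedding); what the paper's route buys is a sharper corollary that is reused later: its proof shows that preserving just the $(r-1)(n-r-1)$ vertex-local relations already forces preservation of all Pl\"ucker relations, which is exactly what justifies introducing only the coefficients $d_v$ and the equation sets $\mathcal{P}_1$, $\mathcal{P}_2$ in \S\ref{sec:smoothing}; your argument, using three-term relations for arbitrary $S$ and $a,b,c,d$, does not directly yield that refinement.

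Two asserted steps in your write-up do need proofs, and the second is the only place where something real is hidden in a ``hence''. (i) The count that each index of $\{1,\dots,n\}$ occurs in exactly $r$ of the subsets $I_{\lambda_j}$ of a $1$-covering: since each arrow is used exactly once, the multiset $\bigsqcup_j I_{\lambda_j}$ equals $\bigsqcup_{a\in\LQ_1}\{\text{labels of vertical steps of }a\}$, hence is the same for every crossing diagram; computing it for the all-X diagram, whose paths have $I_\lambda$ equal to the intervals $\{j+1,\dots,j+r\}$ for $0\le j\le k$ and the sets $\{1,\dots,l\}\cup\{k+l+1,\dots,n\}$ for $1\le l\le r-1$, gives multiplicity exactly $r$ for every index. (ii) ``Acts by scaling on $\PP^{\binom{n}{r}-1}$, hence lies in $T\cap\tilde G$'': the kernel $K$ of $(\CC^*)^{|\LQ_1|}\to\mathrm{Aut}\bigl(\PP^{\binom{n}{r}-1}\bigr)$ contains the image of $T$ and has the same dimension, but a priori $K$ could be disconnected, with torsion components meeting $\tilde G$ outside $T$. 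In fact $K=T$: if all path-products $\prod_{a\in\lambda}\phi_a$ are equal, define $t_v$ to be the $\phi$-product along any quiver path from the source to $v$ (well defined, because any two such paths can be completed by a common tail to full paths, which have equal products), and then $\phi_a=t_{t(a)}t_{s(a)}^{-1}$ exhibits $(\phi_a)$ as an element of $T$. With (i) and (ii) supplied, and the degenerate cases $r=1$ or $n-r=1$ settled by noting that there are then no Pl\"ucker relations and $|G|=n^{n-2}=|H_{n,r}|$ so that the injectivity of $\Psi$ already forces $\Psi(H_{n,r})=G=G_h$, your proof is complete.
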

\begin{proof}
The map $\Psi$ defines an isomorphism $H_{n,r} \to \Psi(H_{n,r})$. We will show that $\Psi(H_{n,r})=G_h$; by Lemma \ref{lem:imagepreserves}, $\Psi(H_{n,r}) \subseteq G_h$. We will show the other direction by demonstrating that for every $[g] \in G_h$ with lift $g \in \tilde{G}$, the $\tilde{G} \cap T$-orbit of $g$ has non-empty intersection with $\tilde{\Psi}(\tilde{H}_{n,r})$. We will use two collections of arrows in the proof. The first collection $C_1$ is described via the following example: it is the set of $n=9$ arrows that are fully contained in the blue part of the diagram. 
\[\begin{tikzpicture}[scale=0.6]
\draw (0,0) rectangle (1,1);
\draw (0,1) rectangle (1,2);
\draw (0,2) rectangle (1,3);
\draw (0,3) rectangle (1,4);
\draw[] (1,1) rectangle (2,2);
\draw[] (1,0) rectangle (2,1);
\draw[] (1,2) rectangle (2,3);
\draw[] (1,3) rectangle (2,4);
\draw[] (2,1) rectangle (3,2);
\draw[] (2,2) rectangle (3,3);
\draw[] (2,0) rectangle (3,1);
\draw[] (2,3) rectangle (3,4);
\draw[] (3,0) rectangle (4,1);
\draw[] (3,1) rectangle (4,2);
\draw[] (3,2) rectangle (4,3);
\draw[] (3,3) rectangle (4,4);
\draw (4,0) rectangle (5,1);
\draw (4,1) rectangle (5,2);
\draw (4,2) rectangle (5,3);
\draw (4,3) rectangle (5,4);
\draw[fill] (0,0) circle (2pt);
\draw[fill] (1,1) circle (2pt);
\draw[fill] (2,1) circle (2pt);
\draw[fill] (3,1) circle (2pt);
\draw[fill] (4,1) circle (2pt);
\draw[fill] (1,2) circle (2pt);
\draw[fill] (2,2) circle (2pt);
\draw[fill] (3,2) circle (2pt);
\draw[fill] (4,2) circle (2pt);
\draw[fill] (1,3) circle (2pt);
\draw[fill] (2,3) circle (2pt);
\draw[fill] (3,3) circle (2pt);
\draw[fill] (4,3) circle (2pt);
\draw[fill] (5,4) circle (2pt);

\draw[blue,thick] (0,0)--(0,4)--(5,4);
\draw[blue,thick] (0,3)--(1,3);
\draw[blue,thick] (0,2)--(1,2);
\draw[blue,thick] (0,1)--(1,1);
\draw[blue,thick] (1,4)--(1,3);
\draw[blue,thick] (2,4)--(2,3);
\draw[blue,thick] (3,4)--(3,3);
\draw[blue,thick] (4,4)--(4,3);
\draw[blue,thick] (4,3)--(5,3)--(5,4);

\node at (-0.2,0.5) (A) {\tiny 1};
\node at (-0.2,1.5) (A1) { \tiny 2};
\node at (-0.2,2.5) (A2) { \tiny 3};
\node at (-0.2,3.5) (A3) { \tiny 4};
\node at (0.8,0.5) (B) { \tiny 2};
\node at (0.8,1.5) (B1) { \tiny 3};
\node at (0.8,2.5) (B2) { \tiny 4};
\node at (0.8,3.5) (B3) { \tiny 5};
\node at (1.8,0.5) (C) { \tiny 3};
\node at (1.8,1.5) (C1) { \tiny 4};
\node at (1.8,2.5) (C2) { \tiny 5};
\node at (1.8,3.5) (C3) { \tiny 6};
\node at (2.8,0.5) (D) { \tiny 4};
\node at (2.8,1.5) (D1) { \tiny 5};
\node at (2.8,2.5) (D2) { \tiny 6};
\node at (2.8,3.5) (D3) { \tiny 7};
\node at (3.8,0.5) (E) { \tiny 5};
\node at (3.8,1.5) (E1) { \tiny 6};
\node at (3.8,2.5) (E2) { \tiny 7};
\node at (3.8,3.5) (E3) { \tiny 8};
\node at (4.8,0.5) (F) { \tiny 6};
\node at (4.8,1.5) (F1) { \tiny 7};
\node at (4.8,2.5) (F2) { \tiny 8};
\node at (4.8,3.5) (F3) { \tiny 9};
\end{tikzpicture}\]
In general there are $n$ arrows in $C_1$. We label them, starting at the bottom left corner, by $a^1_1,\dots,a^1_n.$ This collection is chosen so that if $(\psi_a) \in  \tilde{\Psi}(\tilde{H}_{n,r})$, then the $\psi_a$,~$a \not \in C_1$ are determined by the $\psi_a$,~$a \in C_1$. This is because the $\psi_a$,~$a \in C_1$, are:
\[\zeta_1,\zeta_1 \zeta_2, \dots, \zeta_1 \cdots \zeta_r, \zeta_{r+1}, \zeta_{r+2}, \dots, \zeta_{n-1}, \zeta_n\]
so that the exponent matrix of the map $\Psi$ composed with the projection $\pi:(\psi_a)_{a \in \LQ_1} \mapsto (\psi_a)_{a \in C_1}$ is an invertible matrix in $\GL(n,\ZZ)$.  This means that for $(\psi_a)_{a} \in \tilde{\Psi}(\tilde{H}_n,r)$, we can express each $\psi_a$,~$a \in \LQ_1$, as a function of the $\psi_a$,~$a \in C_1$. Denote this function by $\xi_a$, which we view as a function on $\tilde{G}$ that depends only on the $\psi_a$ for $a \in C_1$. Then the maps $\xi_a$ provide a way of measuring how far away an element of $\tilde{G}$ is from being in the image of $\tilde{\Psi}(\tilde{H}_{n,r})$:
\[ g=(\psi_a)_{a} \in \tilde{\Psi}(\tilde{H}_{n,r}) \Leftrightarrow \text{$\xi_a(g)=\psi_a$ for all $a \in \LQ_1$.}\]

The second collection of arrows $C_2$ is drawn in green in the example below. It is disjoint from $C_1$. 
\[\begin{tikzpicture}[scale=0.6]
\draw (0,0) rectangle (1,1);
\draw (0,1) rectangle (1,2);
\draw (0,2) rectangle (1,3);
\draw (0,3) rectangle (1,4);
\draw[] (1,1) rectangle (2,2);
\draw[] (1,0) rectangle (2,1);
\draw[] (1,2) rectangle (2,3);
\draw[] (1,3) rectangle (2,4);
\draw[] (2,1) rectangle (3,2);
\draw[] (2,2) rectangle (3,3);
\draw[] (2,0) rectangle (3,1);
\draw[] (2,3) rectangle (3,4);
\draw[] (3,0) rectangle (4,1);
\draw[] (3,1) rectangle (4,2);
\draw[] (3,2) rectangle (4,3);
\draw[] (3,3) rectangle (4,4);
\draw (4,0) rectangle (5,1);
\draw (4,1) rectangle (5,2);
\draw (4,2) rectangle (5,3);
\draw (4,3) rectangle (5,4);
\draw[fill] (0,0) circle (2pt);
\draw[fill] (1,1) circle (2pt);
\draw[fill] (2,1) circle (2pt);
\draw[fill] (3,1) circle (2pt);
\draw[fill] (4,1) circle (2pt);
\draw[fill] (1,2) circle (2pt);
\draw[fill] (2,2) circle (2pt);
\draw[fill] (3,2) circle (2pt);
\draw[fill] (4,2) circle (2pt);
\draw[fill] (1,3) circle (2pt);
\draw[fill] (2,3) circle (2pt);
\draw[fill] (3,3) circle (2pt);
\draw[fill] (4,3) circle (2pt);
\draw[fill] (5,4) circle (2pt);

\draw[green,thick] (0,0)--(1,0)--(1,1);
\draw[green,thick] (2,1)--(1,1);
\draw[green,thick] (2,2)--(1,2);
\draw[green,thick] (2,3)--(1,3);
\draw[green,thick] (3,1)--(2,1);
\draw[green,thick] (3,2)--(2,2);
\draw[green,thick] (3,3)--(2,3);
\draw[green,thick] (4,1)--(3,1);
\draw[green,thick] (4,2)--(3,2);
\draw[green,thick] (4,3)--(3,3);
\draw[green,thick] (1,1)--(1,3);

\node at (-0.2,0.5) (A) {\tiny 1};
\node at (-0.2,1.5) (A1) { \tiny 2};
\node at (-0.2,2.5) (A2) { \tiny 3};
\node at (-0.2,3.5) (A3) { \tiny 4};
\node at (0.8,0.5) (B) { \tiny 2};
\node at (0.8,1.5) (B1) { \tiny 3};
\node at (0.8,2.5) (B2) { \tiny 4};
\node at (0.8,3.5) (B3) { \tiny 5};
\node at (1.8,0.5) (C) { \tiny 3};
\node at (1.8,1.5) (C1) { \tiny 4};
\node at (1.8,2.5) (C2) { \tiny 5};
\node at (1.8,3.5) (C3) { \tiny 6};
\node at (2.8,0.5) (D) { \tiny 4};
\node at (2.8,1.5) (D1) { \tiny 5};
\node at (2.8,2.5) (D2) { \tiny 6};
\node at (2.8,3.5) (D3) { \tiny 7};
\node at (3.8,0.5) (E) { \tiny 5};
\node at (3.8,1.5) (E1) { \tiny 6};
\node at (3.8,2.5) (E2) { \tiny 7};
\node at (3.8,3.5) (E3) { \tiny 8};
\node at (4.8,0.5) (F) { \tiny 6};
\node at (4.8,1.5) (F1) { \tiny 7};
\node at (4.8,2.5) (F2) { \tiny 8};
\node at (4.8,3.5) (F3) { \tiny 9};
\end{tikzpicture}\]
Taking the target map from $C_2 \to \LQ_0$ gives an injection.

\begin{claim}
The $T \cap \tilde{G}$ orbit of any $g \in \tilde{G}$ contains another group element $h=(\psi_a)_{a \in \LQ_1}$ satisfying, for $a \in C_2$:
\begin{equation}\label{eq:gprime} \psi_a=\xi_a(h).
\end{equation}
In other words, along $a \in C_2$, the relations we need for $h$ to be an element of $\tilde{\Psi}(\tilde{H}_{n,r})$ are satisfied. 
\end{claim}
\begin{proof}[Proof of claim]
There are two types of arrows in $C_2$: horizontal arrows and arrows containing a vertical step, which we call vertical for simplicity.  Each of the arrows in $C_2$ picks out a column in the weight matrix $[b_{va}]$ of $X_{{P_{n,r}}}$. If we order the arrows in $C_1$ from left to right and top to bottom, the resulting sub-matrix of $[b_{va}]$ is the identity matrix along the vertical arrows, and then upper triangular for the rest with $1$s on the diagonal.
  
If $a$ is horizontal, then the equation that $h$ must satisfy is
\[\psi_a=1.\]
Recall that the generators of $T \cap \tilde{G}$ are given by the rows of the weight matrix of the ladder quiver -- there is a row for each vertex, and the entries are the $b_{va}$; see \eqref{eqn:generators}. 
Since the submatrix is upper triangular, we can use the generators  of $T \cap \tilde{G}$ corresponding to the vertices $t(a)$, $a$ horizontal, to move $g$ within its orbit to a  group element satisfying the equations we need for horizontal $a \in C_2$. Call this element $g'=(\psi'_a)$. 

 We now focus on the vertical arrows. Note that none of the generators of $T \cap \tilde{G}$ associated to the vertical arrows act non-trivially on the $\psi_a$,~$a \in C_2$, $a$ horizontal. Label the vertical arrows of $C_2$ from bottom to top as $a^2_1,\dots,a^2_{r-1}$. Then the equations that $g'$ must satisfy are
\[\psi_{a^2_i}=\psi_{a^1_{i+1}}/\psi_{a^1_{i}}.\]
To move $g$ within its orbit so that it satisfies these equations, we start with the last vertical arrow, $a^2_{r-1}$. If we act with the generator associated to $t(a^2_{r-1})$, we can set the ${a^2_{r-1}}$-th coordinate to 
\[\psi'_{a^1_{r}}/\psi'_{a^1_{r-1}}.\]
Note that that generator acts non-trivially on the $a^1_{r}$ and $a^1_{r-1}$ coordinates -- however, it acts on the same weight with each, so we now have  a group element which satisfies the $a^2_{r_1}$ equation \emph{and} the horizontal equations. If we move one step lower, the same thing happens: our group element acts trivially on the ratios $\psi'_{a^1_{r}}/\psi'_{a^1_{r-1}}$ and $\psi'_{a^1_{r-1}}/\psi'_{a^1_{r-2}}$. Continuing downwards, eventually we find $h$ as required. 
\end{proof}

Let's rephrase what we've done so far: for every $[g] \in G$, we have identified a particular lift $g \in \tilde{G}$ satisfying
\begin{equation}\label{eq:gnotprime} \psi_a=\xi_a(g) \text{ for all } a \in C_2.
\end{equation}
To complete the proof, it is enough to show that if $[g] \in G_h$, then the special lift $g$ constructed above is in $\tilde{\Psi}(\tilde{H}_{n,r})$. Since $\tilde{G} \cap T$ acts trivially on the Pl\U cker coordinates, both $g$ and $[g]$ preserve the Pl\U cker relations. In particular, $g$ preserves certain three term Pl\U cker relations: we will define such a relation for every internal vertex in the ladder quiver.

For every internal vertex $v \in \LQ_0$  fix an incomparable pair $\lambda_v$,~$\sigma_v$ which cross at $v$ and agree outside of the $2\times 2$ box centered at $v$. (The argument will not depend on what happens outside the $2 \times 2$ box centered at $v$.)  For example, here are some pairs with $v$ highlighted in red and the box shaded in gray. 
\[\begin{tikzpicture}[scale=0.6]
\draw[fill, lightgray] (0,0) rectangle (1,1);
\draw[fill,lightgray] (0,1) rectangle (1,2);
\draw (0,2) rectangle (1,3);
\draw (0,3) rectangle (1,4);
\draw[fill,lightgray] (1,1) rectangle (2,2);
\draw[fill,lightgray] (1,0) rectangle (2,1);
\draw[] (1,2) rectangle (2,3);
\draw[] (1,3) rectangle (2,4);
\draw[] (2,1) rectangle (3,2);
\draw[] (2,2) rectangle (3,3);
\draw[] (2,0) rectangle (3,1);
\draw[] (2,3) rectangle (3,4);
\draw[] (3,0) rectangle (4,1);
\draw[] (3,1) rectangle (4,2);
\draw[] (3,2) rectangle (4,3);
\draw[] (3,3) rectangle (4,4);
\draw (4,0) rectangle (5,1);
\draw (4,1) rectangle (5,2);
\draw (4,2) rectangle (5,3);
\draw (4,3) rectangle (5,4);
\draw[fill] (0,0) circle (2pt);

\draw[fill] (2,1) circle (2pt);
\draw[fill] (3,1) circle (2pt);
\draw[fill] (4,1) circle (2pt);
\draw[fill] (1,2) circle (2pt);
\draw[fill] (2,2) circle (2pt);
\draw[fill] (3,2) circle (2pt);
\draw[fill] (4,2) circle (2pt);
\draw[fill] (1,3) circle (2pt);
\draw[fill] (2,3) circle (2pt);
\draw[fill] (3,3) circle (2pt);
\draw[fill] (4,3) circle (2pt);
\draw[fill] (5,4) circle (2pt);

\draw[blue,thick] (0,0)--(0,1)--(1,1)--(2,1)--(2,4)--(5,4);
\draw[green,thick] (0,0)--(1,0)--(1,1)--(1,2)--(2.05,2)--(2.05,3.95)--(5,3.95);
\draw[fill,red] (1,1) circle (2pt);

\node at (-0.2,0.5) (A) {\tiny 1};
\node at (-0.2,1.5) (A1) { \tiny 2};
\node at (-0.2,2.5) (A2) { \tiny 3};
\node at (-0.2,3.5) (A3) { \tiny 4};
\node at (0.8,0.5) (B) { \tiny 2};
\node at (0.8,1.5) (B1) { \tiny 3};
\node at (0.8,2.5) (B2) { \tiny 4};
\node at (0.8,3.5) (B3) { \tiny 5};
\node at (1.8,0.5) (C) { \tiny 3};
\node at (1.8,1.5) (C1) { \tiny 4};
\node at (1.8,2.5) (C2) { \tiny 5};
\node at (1.8,3.5) (C3) { \tiny 6};
\node at (2.8,0.5) (D) { \tiny 4};
\node at (2.8,1.5) (D1) { \tiny 5};
\node at (2.8,2.5) (D2) { \tiny 6};
\node at (2.8,3.5) (D3) { \tiny 7};
\node at (3.8,0.5) (E) { \tiny 5};
\node at (3.8,1.5) (E1) { \tiny 6};
\node at (3.8,2.5) (E2) { \tiny 7};
\node at (3.8,3.5) (E3) { \tiny 8};
\node at (4.8,0.5) (F) { \tiny 6};
\node at (4.8,1.5) (F1) { \tiny 7};
\node at (4.8,2.5) (F2) { \tiny 8};
\node at (4.8,3.5) (F3) { \tiny 9};
\end{tikzpicture}, \hspace{6mm}
\begin{tikzpicture}[scale=0.6]
\draw (0,0) rectangle (1,1);
\draw (0,1) rectangle (1,2);
\draw (0,2) rectangle (1,3);
\draw (0,3) rectangle (1,4);
\draw[] (1,1) rectangle (2,2);
\draw[] (1,0) rectangle (2,1);
\draw[] (1,2) rectangle (2,3);
\draw[] (1,3) rectangle (2,4);
\draw[fill,lightgray] (2,1) rectangle (3,2);
\draw[fill,lightgray] (2,2) rectangle (3,3);
\draw[] (2,0) rectangle (3,1);
\draw[] (2,3) rectangle (3,4);
\draw[] (3,0) rectangle (4,1);
\draw[fill,lightgray] (3,1) rectangle (4,2);
\draw[fill, lightgray] (3,2) rectangle (4,3);
\draw[] (3,3) rectangle (4,4);
\draw (4,0) rectangle (5,1);
\draw (4,1) rectangle (5,2);
\draw (4,2) rectangle (5,3);
\draw (4,3) rectangle (5,4);
\draw[fill] (0,0) circle (2pt);

\draw[fill] (2,1) circle (2pt);
\draw[fill] (3,1) circle (2pt);
\draw[fill] (4,1) circle (2pt);
\draw[fill] (1,2) circle (2pt);
\draw[fill] (2,2) circle (2pt);
\draw[fill] (1,1) circle (2pt);
\draw[fill] (4,2) circle (2pt);
\draw[fill] (1,3) circle (2pt);
\draw[fill] (2,3) circle (2pt);
\draw[fill] (3,3) circle (2pt);
\draw[fill] (4,3) circle (2pt);
\draw[fill] (5,4) circle (2pt);

\draw[green,thick] (0,0)--(0,1)--(2,1)--(3,1)--(3,2)--(3,3)--(4,3)--(4,4)--(5,4);
\draw[blue,thick] (0.05,0)--(0.05,0.95)--(2,0.95)--(2,2)--(3,2)--(4.05,2)--(4.05,3.95)--(5,3.95);
\draw[fill,red] (3,2) circle (2pt);

\node at (-0.2,0.5) (A) {\tiny 1};
\node at (-0.2,1.5) (A1) { \tiny 2};
\node at (-0.2,2.5) (A2) { \tiny 3};
\node at (-0.2,3.5) (A3) { \tiny 4};
\node at (0.8,0.5) (B) { \tiny 2};
\node at (0.8,1.5) (B1) { \tiny 3};
\node at (0.8,2.5) (B2) { \tiny 4};
\node at (0.8,3.5) (B3) { \tiny 5};
\node at (1.8,0.5) (C) { \tiny 3};
\node at (1.8,1.5) (C1) { \tiny 4};
\node at (1.8,2.5) (C2) { \tiny 5};
\node at (1.8,3.5) (C3) { \tiny 6};
\node at (2.8,0.5) (D) { \tiny 4};
\node at (2.8,1.5) (D1) { \tiny 5};
\node at (2.8,2.5) (D2) { \tiny 6};
\node at (2.8,3.5) (D3) { \tiny 7};
\node at (3.8,0.5) (E) { \tiny 5};
\node at (3.8,1.5) (E1) { \tiny 6};
\node at (3.8,2.5) (E2) { \tiny 7};
\node at (3.8,3.5) (E3) { \tiny 8};
\node at (4.8,0.5) (F) { \tiny 6};
\node at (4.8,1.5) (F1) { \tiny 7};
\node at (4.8,2.5) (F2) { \tiny 8};
\node at (4.8,3.5) (F3) { \tiny 9};
\end{tikzpicture}\]
Fix an interior vertex $v$. Locally about vertex $v$, $\lambda_v$ and $\sigma_v$ look like:
\begin{equation}\label{eq:sigma} 
  \begin{aligned}
    \begin{tikzpicture}[scale=1.0]
        \draw (0,0) rectangle (1,1);
        \draw (0,1) rectangle (1,2);

        \draw[] (1,1) rectangle (2,2);
        \draw[] (1,0) rectangle (2,1);
        \draw[fill] (1,1) circle (2pt);
        \draw[blue,thick] (0,0)--(0,1)--(1,1)--(2,1)--(2,2);
        \draw[green,thick] (0,0)--(1,0)--(1,1)--(1,2)--(2,2);
        \draw[fill,red] (1,1) circle (2pt);

        \node at (-0.3,0.5) (A) {\tiny $i$};
        \node at (-0.3,1.5) (A1) { \tiny $i+1$};
        \node at (0.7,0.5) (B) { \tiny $i+1$};
        \node at (0.7,1.5) (B1) { \tiny $i+2$};
        \node at (1.7,0.5) (C) { \tiny $i+2$};
        \node at (1.7,1.5) (C1) { \tiny $i+3$};

        \node at (1.2,0.8) (A) {\small v};
      \end{tikzpicture}
    \end{aligned}
\end{equation}
By definition, $\sigma_v \wedge \lambda_v$ and $\sigma_v \vee \lambda_v$ agree with $\lambda_v, \sigma_v$ outside of the box centered at $v$, and locally at $v$ they look like:
\[\begin{tikzpicture}[scale=1.0]
\draw (0,0) rectangle (1,1);
\draw (0,1) rectangle (1,2);

\draw[] (1,1) rectangle (2,2);
\draw[] (1,0) rectangle (2,1);
\draw[fill] (1,1) circle (2pt);
\draw[blue,thick] (0,0)--(0,1)--(1,1)--(1,2)--(2,2);
\draw[green,thick] (0,0)--(1,0)--(1,1)--(2,1)--(2,2);
\draw[fill,red] (1,1) circle (2pt);

\node at (-0.3,0.5) (A) {\tiny $i$};
\node at (-0.3,1.5) (A1) { \tiny $i+1$};
\node at (0.7,0.5) (B) { \tiny $i+1$};
\node at (0.7,1.5) (B1) { \tiny $i+2$};
\node at (1.7,0.5) (C) { \tiny $i+2$};
\node at (1.7,1.5) (C1) { \tiny $i+3$};

\node at (1.2,0.8) (A) {\small v};
\end{tikzpicture}\]

The incomparable pair $\sigma_v, \lambda_v$ give rise to a 3-term Pl\U cker relation: 
\[p_{\alpha_v} p_{\beta_v}-p_{\lambda_v} p_{\sigma_v}+p_{\sigma_v \wedge \lambda_v} p_{\sigma_v \vee \lambda_v}=0.\]
The partitions $\alpha_v$,~$\beta_v$ are characterised by agreeing with $\sigma$,~$\lambda$ outside of the box centered at $v$, and locally at $v$ they look like: 
\begin{equation} \label{eq:sigma2}
\begin{aligned}
\begin{tikzpicture}[scale=1.0]
\draw (0,0) rectangle (1,1);
\draw (0,1) rectangle (1,2);

\draw[] (1,1) rectangle (2,2);
\draw[] (1,0) rectangle (2,1);
\draw[fill] (1,1) circle (2pt);
\draw[blue,thick] (0,0)--(0,2)--(2,2);
\draw[green,thick] (0,0)--(2,0)--(2,2);
\draw[fill,red] (1,1) circle (2pt);

\node at (-0.3,0.5) (A) {\tiny $i$};
\node at (-0.3,1.5) (A1) { \tiny $i+1$};
\node at (0.7,0.5) (B) { \tiny $i+1$};
\node at (0.7,1.5) (B1) { \tiny $i+2$};
\node at (1.7,0.5) (C) { \tiny $i+2$};
\node at (1.7,1.5) (C1) { \tiny $i+3$};

\node at (1.2,0.8) (A) {\small v};
\end{tikzpicture}
\end{aligned}
\end{equation}
One can easily read off from this picture that this relation is homogeneous with respect to the $\Psi(H_{n,r})$-action. Let $S_{v}$ be the collection of arrows  of $\LQ$ whose intersection with the $2\times 2$ box in \eqref{eq:sigma} is contained in the blue or green highlighted paths in \eqref{eq:sigma}. Let $T_v$ be the collection of arrows  of $\LQ$ whose intersection with the $2 \times 2$ box in \eqref{eq:sigma2} is contained in the blue or green highlighted paths in \eqref{eq:sigma2}. If $g=(\psi_a) \in \tilde{G}$ preserves this relationship, then
\begin{equation}\label{eq:vhomog} \prod_{a \in S_{v}} \psi_a= \prod_{a \in T_{v}} \psi_a.\end{equation}
Now fix  some $[g] \in G_h$ with lift $g \in \tilde{G} $ constructed as in \eqref{eq:gnotprime}. The Pl\U cker relation constructed for any $v$ is homogeneous for $g=(\psi_a)$, so \eqref{eq:vhomog} holds for all $v$. 

Consider the following diagram, which highlights both $C_1$ and $C_2$. 
\[\begin{tikzpicture}[scale=0.6]
\draw (0,0) rectangle (1,1);
\draw (0,1) rectangle (1,2);
\draw (0,2) rectangle (1,3);
\draw (0,3) rectangle (1,4);
\draw[] (1,1) rectangle (2,2);
\draw[] (1,0) rectangle (2,1);
\draw[] (1,2) rectangle (2,3);
\draw[] (1,3) rectangle (2,4);
\draw[] (2,1) rectangle (3,2);
\draw[] (2,2) rectangle (3,3);
\draw[] (2,0) rectangle (3,1);
\draw[] (2,3) rectangle (3,4);
\draw[] (3,0) rectangle (4,1);
\draw[] (3,1) rectangle (4,2);
\draw[] (3,2) rectangle (4,3);
\draw[] (3,3) rectangle (4,4);
\draw (4,0) rectangle (5,1);
\draw (4,1) rectangle (5,2);
\draw (4,2) rectangle (5,3);
\draw (4,3) rectangle (5,4);
\draw[fill] (0,0) circle (2pt);
\draw[fill] (1,1) circle (2pt);
\draw[fill] (2,1) circle (2pt);
\draw[fill] (3,1) circle (2pt);
\draw[fill] (4,1) circle (2pt);
\draw[fill] (1,2) circle (2pt);
\draw[fill] (2,2) circle (2pt);
\draw[fill] (3,2) circle (2pt);
\draw[fill] (4,2) circle (2pt);
\draw[fill] (1,3) circle (2pt);
\draw[fill] (2,3) circle (2pt);
\draw[fill] (3,3) circle (2pt);
\draw[fill] (4,3) circle (2pt);
\draw[fill] (5,4) circle (2pt);

\draw[green,thick] (0,0)--(1,0)--(1,1);
\draw[green,thick] (2,1)--(1,1);
\draw[green,thick] (2,2)--(1,2);
\draw[green,thick] (2,3)--(1,3);
\draw[green,thick] (3,1)--(2,1);
\draw[green,thick] (3,2)--(2,2);
\draw[green,thick] (3,3)--(2,3);
\draw[green,thick] (4,1)--(3,1);
\draw[green,thick] (4,2)--(3,2);
\draw[green,thick] (4,3)--(3,3);
\draw[green,thick] (1,1)--(1,3);

\draw[blue,thick] (0,0)--(0,4)--(5,4);
\draw[blue,thick] (0,3)--(1,3);
\draw[blue,thick] (0,2)--(1,2);
\draw[blue,thick] (0,1)--(1,1);
\draw[blue,thick] (1,4)--(1,3);
\draw[blue,thick] (2,4)--(2,3);
\draw[blue,thick] (3,4)--(3,3);
\draw[blue,thick] (4,4)--(4,3);
\draw[blue,thick] (4,3)--(5,3)--(5,4);

\node at (-0.2,0.5) (A) {\tiny 1};
\node at (-0.2,1.5) (A1) { \tiny 2};
\node at (-0.2,2.5) (A2) { \tiny 3};
\node at (-0.2,3.5) (A3) { \tiny 4};
\node at (0.8,0.5) (B) { \tiny 2};
\node at (0.8,1.5) (B1) { \tiny 3};
\node at (0.8,2.5) (B2) { \tiny 4};
\node at (0.8,3.5) (B3) { \tiny 5};
\node at (1.8,0.5) (C) { \tiny 3};
\node at (1.8,1.5) (C1) { \tiny 4};
\node at (1.8,2.5) (C2) { \tiny 5};
\node at (1.8,3.5) (C3) { \tiny 6};
\node at (2.8,0.5) (D) { \tiny 4};
\node at (2.8,1.5) (D1) { \tiny 5};
\node at (2.8,2.5) (D2) { \tiny 6};
\node at (2.8,3.5) (D3) { \tiny 7};
\node at (3.8,0.5) (E) { \tiny 5};
\node at (3.8,1.5) (E1) { \tiny 6};
\node at (3.8,2.5) (E2) { \tiny 7};
\node at (3.8,3.5) (E3) { \tiny 8};
\node at (4.8,0.5) (F) { \tiny 6};
\node at (4.8,1.5) (F1) { \tiny 7};
\node at (4.8,2.5) (F2) { \tiny 8};
\node at (4.8,3.5) (F3) { \tiny 9};
\end{tikzpicture}\]
Consider the top left interior vertex $v_0$. Note that there is precisely one arrow $a_0$ in $S_{v_0} \cup T_{v_0}$ that is not contained in $C:=C_1 \cup C_2$. Using \eqref{eq:vhomog} for $v_0$ one can see that $\psi'_{a_0}$ is determined by the $\psi_a$,~$a \in C$ -- and so in fact, is determined by the $\psi_a$,~$a \in C_1$. Abusing notation, replace $C$ by $C \cup \{a_0\}$. Now consider the vertex $v_1$ directly below $v_0$, and note that there is precisely one arrow $a_1$ in $S_{v_1} \cup T_{v_1}$ that is not contained in (the new) $C$. Applying \eqref{eq:vhomog} for $v_1$ we see that $\psi'_{a_1}$ is determined by the $\psi_a$,~$a \in C$ and hence by $\psi_a'$,~$a \in C_1$. Applying this argument repeatedly, moving down columns from left to right, we conclude that $g$ is uniquely characterized by: 
\begin{itemize}
\item the values $(\psi_a)_{a \in C_1}$
\item equation \eqref{eq:gnotprime}
\item preserving the Pl\U cker relations. 
\end{itemize}
We now use the $\psi_a$,~$a \in C_1$, to define an element of $\tilde{\Psi}(\tilde{H}_{n,r})$, and deduce that this element agrees with $g$. Writing the relation $\prod_{a \in \LQ_1} \psi_a=1$ in terms of $\psi_a$,~$a \in C_1$, we see that
\[\prod_{i=1}^r \left(\frac{\psi_{a^2_{i}}}{\psi_{a^2_{i-1}}}\right)^r \prod_{i=r+1}^n \psi_{a^2_{i}}^r=1.\]
So   $(\xi_a((\psi_a)_{a \in C_1}))$ defines an element of $\tilde{\Psi}(\tilde{H}_{n,r}) \subset \tilde{G}$ which agrees with $g$ along $a \in C_1$, and satisfies the other two conditions. It follows that $g=\xi((\psi_a)_{a \in C_1})$, and in particular $g \in \tilde{\Psi}(\tilde{H}_{n,r})$. Finally we conclude that $[g] \in \Psi(H_{n,r})$ as claimed. 
\end{proof}


To translate this combinatorial statement to geometry, we extend the $G$-action from $X_P$ to the family which gives the toric degeneration. The degeneration is given by the Pl\U cker relations. To illustrate, we describe the $\Gr(5,2)$ example:
\begin{eg}\label{eg:gr52deg} When $n=5$,~$r=2$, the family is given by the 5 polynomials
\[c p_{12} p_{34}-p_{13}p_{24}+p_{14}p_{23},\]
\[c p_{12} p_{35}-p_{13}p_{25}+p_{15}p_{23},\]
\[c p_{12} p_{45}-p_{14}p_{25}+p_{15}p_{24},\]
\[c p_{13} p_{45}-p_{14}p_{35}+p_{15}p_{34},\]
\[c p_{23} p_{45}-p_{24}p_{35}+p_{25}p_{34}.\]
that is, this is a family in $\PP^{9} \times \CC$, where the coordinate on the second factor is $c$. The fiber over $c=0$ is the degenerate toric variety $X_{P_{5,2}}$, and the generic fiber is isomorphic to the Grassmannian $\Gr(5,2)$. To make this family $G$-equivariant, we need to extend the action of $G$ to the coefficient~$c$. Label the vertices of the ladder diagram as shown:
\[
  \begin{tikzpicture}[scale=0.6]
    \draw (0,0) rectangle (1,1);
    \draw (0,1) rectangle (1,2);
    \draw (1,1) rectangle (2,2);
    
    \draw (1,0) rectangle (2,1);
    \draw (2,1) rectangle (3,2);
    
    \draw (2,0) rectangle (3,1);
    \draw[fill] (0,0) circle (2pt);
    \draw[fill] (1,1) circle (2pt);
    \draw[fill] (2,1) circle (2pt);
    \draw[fill] (3,2) circle (2pt);

    \node[label={\tiny 0}] at (-0.3,-0.2) {};
    \node[label={\tiny 1}] at (1-0.3,1-0.2) {};
    \node[label={\tiny 2}] at (2-0.3,1-0.2) {};
    \node[label={\tiny 3}] at (3-0.3,2-0.2) {};
  \end{tikzpicture}
\]
There are nine arrows in the ladder quiver, which we label
\[a_{0,1},a_{0,1}',a_{0,2}, a_{0,3},a_{0,3}',a_{1,2},a_{1,3},a_{2,3},a_{2,3}',\]
where $a_{i,j}$ is an arrow from $i \to j$, and we add a $'$ to the lower arrow if there are two such arrows. 
Consider the first equation above. The weight of the first monomial is $\psi_{a_{0,3}} \psi_{a_{0,2}} \psi_{a_{2,3}}$; the weight of both the second and third monomial is $\psi_{a_{0,1}} \psi_{a_{1,3}} \psi_{a_{0,1}'} \psi_{a_{1,2}} \psi_{a_{2,3}}$, so for $c$ to be equivariant in this equation, it should have weight
\[ (\psi_{a_{0,3}} \psi_{a_{0,2}} \psi_{a_{2,3}})^{-1} \psi_{a_{0,1}} \psi_{a_{1,3}} \psi_{a_{0,1}'} \psi_{a_{1,2}} \psi_{a_{2,3}}.\]
However, as we go through the remaining five relations, the required weight for $c$ changes, so we are forced to introduce multiple coefficients. Consider the polynomials
\[c_1 p_{12} p_{34}-p_{13}p_{24}+p_{14}p_{23},\]
\[c_2 p_{12} p_{35}-p_{13}p_{25}+p_{15}p_{23},\]
\[c_3 p_{12} p_{45}-p_{14}p_{25}+p_{15}p_{24},\]
\[c_4 p_{13} p_{45}-p_{14}p_{35}+p_{15}p_{34},\]
\[c_5 p_{23} p_{45}-p_{24}p_{35}+p_{25}p_{34}.\]
We view the toric degeneration as a family  in $Z\subset \PP^{9} \times \CC^5$, where the second factor has coordinates $(c_i)$, and the equations cutting out $Z$ are above.  Computing the required weights of $c_1,\dots,c_5$, we note that $c_1$ has the same weight as $c_2$ and $c_4$ has the same weight as $c_5$. The variable $c_3$ has the weight of the monomial $c_1 c_4$. To get rid of the excess variables, we add the equations
\[ c_1=c_2, \: c_4=c_5, \: c_3= c_1 c_4.\]

Consider the quotient $Z/G$. The special fiber -- when $c_i=0$ -- is $X_{P_{n,r}}/G$. Now consider a generic fiber of $Z$. The equation $c_i=1$ is not homogeneous for the $G$-action; however, we can consider the disconnected subvariety $Z_{1}$ cut out of $Z$ by the equations $c_i^5=1$, $i \in \{1,\ldots,5\}$.
The subgroup of $G$ that preserves a given component of $Z_{1}$ is precisely $H_{5,2}$. The number of components of $Z_1$ is 25. The group $G/H_{5,2}$ has order $25$.  Comparing group sizes, it follows that $Z_{1}/G=\Gr(5,2)/H_{5,2}$. 
\end{eg}
The situation is slightly more complicated in the general case.  As in the example, $X_P$ is the special fiber of a family of varieties 
\begin{equation}\label{eq:tildefamily}
\tilde{Z} \subset\PP^{\binom{n}{r}-1} \times \CC^m \xrightarrow{\pi} \CC^m.
\end{equation} 
 We denote points in $\CC^m$ as $\underline{c}$.  The coordinates of $\underline{c}$ give coefficients for \emph{all} monomials in the Pl\U cker relations that disappear under the degeneration, and $\tilde{Z}_{\underline{c}}$ is $\tilde{Z} \cap  \pi^{-1}(\underline{c})$. If $\underline{c}_1:=(1,\dots,1)$ and $\underline{c}_0:=(0,\dots,0)$, then $\tilde{Z}_{\underline{c}_1}=\Gr(n,r)$ and  $\tilde{Z}_{\underline{c}_0}=X_{P_{n,r}}$. The generic fiber is isomorphic to $\Gr(n,r)$. 

We can extend the action of $G$ to $\PP^{\binom{n}{r}-1} \times \CC^m \to \CC^m$ by defining the action on $\CC^m$ to be the one that ensures $\tilde{Z}$ is preserved by $G$.  Note that  $G_h$ is the subgroup of $G$ that acts trivially on $\CC^m$. As in Example \ref{eg:gr52deg}, in general some of these variables are redundant. In fact, the proof of Theorem \ref{thm:groupiso} shows that we need only the special $2 \times 2$ three-term Pl\U cker relations associated to each internal vertex in the ladder quiver. More precisely, the proof shows that if $g \in G$ preserves these relations, then it in fact preserves all Pl\U cker relations. Let $d_v$, where $v$ is an internal vertex, denote the these coefficients. Then for any other coefficient $c$, the weight of $c$ under the $G$-action can be expressed as a ratio of the $d_v$. Whatever these ratios are, they give a collection of polynomial equations between the $c_i$, which we call
\begin{equation}\label{eq:poly1}
\mathcal{P}_1.
\end{equation}

Before we can define the family we want, we need to cut down by one last equation, which is non-trivial when $\gcd(n,r)>1$. (It did not occur in our earlier example because $5$ and $2$ are coprime.) This will be a $G$-equivariant equation, of course, and will take some time to describe. 
\begin{mydef} Fix integers $n>r>0$, and set $k=n-r$ and $d=\gcd(n,r)$.  Then there exist $s$,~$t$ such that $r=s d$ and $k=t d$. The ladder diagram $\LQ_{n,r}$ can be subdivided into an $s \times t$ grid of $d \times d$ squares. The set of $d$-diagonal vertices in $\LQ_0$ is the set of $v \in \LQ_0$ such that $v$ lies on the diagonal of one of the $d \times d$ squares. To a $d$-diagonal vertex $v$ lying in a $d \times d$ square $B$, we associate an incomparable pair of partitions $\sigma_{B,v}$,~$\mu_{B,v}$. As before, we only specify these partitions locally,  insisting that they agree outside of $B$. The partitions are specified by the statement that their union contains both the vertical and horizontal line (contained in $B$) passing through the vertex $v$,  and that both partitions pass through the bottom left and top right vertices of $B$. We draw the two partitions below. 
\[\begin{tikzpicture}[scale=0.4]
\draw (0,0) rectangle (5,5);

\draw[fill] (3,3) circle (2pt);

\draw[green,thick] (0,0)--(3,0)--(3,5)--(5,5);
\draw[blue,thick] (0,0)--(0,3)--(5,3)--(5,5);
\node at (3.3,2.7) (A) {\tiny $v$};
\node at (3.9,4.5) (B) {\tiny $\sigma_{B,v}$};
\node at (1,1.5) (C) {\tiny $\mu_{B,v}$};
\end{tikzpicture}\]
We denote by $\sigma_B$ and $\mu_B$ the two partitions that agree with $\sigma_{B,v}$ and $\mu_{B,v}$ away from $B$, and follow the border of $B$:
\[\begin{tikzpicture}[scale=0.4]
\draw (0,0) rectangle (5,5);
\draw[green,thick] (0,0)--(5,0)--(5,5);
\draw[blue,thick] (0,0)--(0,5)--(5,5);

\node at (4,2.5) (B) {\tiny $\sigma_{B}$};
\node at (1,2.5) (C) {\tiny $\mu_{B}$};
\end{tikzpicture}\]
\end{mydef}
\begin{lem}\label{lem:othercoeff} Let $v$ be a $d$-diagonal vertex in the ladder diagram $\LQ_{n,r}$. Then there is a Pl\U cker relation that contains both $p_{\sigma_B} p_{\mu_B}$ and $p_{\sigma_{B,v}} p_{\mu_{B,v}}$. 
\end{lem}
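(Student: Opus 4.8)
The plan is to exhibit a single explicit (generalized) Plücker relation and to read off both monomials as two of its terms, the point being that the two products have the same multiset of Plücker indices. Throughout I place $B$ so that it has $p$ columns of the ladder to its left and $q$ rows below it, and write $L:=p+q+1$ for the label of the lowest vertical step in the leftmost column of $B$; say $v$ sits at diagonal position $(j,j)$ of $B$ with $1\le j\le d-1$ (a corner $v$ gives a comparable, hence trivial, pair). Since $\sigma_{B,v},\mu_{B,v},\sigma_B,\mu_B$ all agree outside $B$, their index sets share a common part $E$ coming from the steps outside $B$, and the whole difference is governed by the $2d$ vertical steps inside $B$, whose labels form the consecutive block $\{L,\dots,L+2d-1\}$.

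First I would record the four index sets by reading off the vertical steps of each path inside $B$, using the labelling convention of \S\ref{sec:smoothing}:
\[
\begin{aligned}
I_{\sigma_{B,v}} &= E\cup\{L,\dots,L+j-1\}\cup\{L+d+j,\dots,L+2d-1\},\\
I_{\mu_{B,v}} &= E\cup\{L+j,\dots,L+j+d-1\},\\
I_{\sigma_B} &= E\cup\{L+d,\dots,L+2d-1\},\\
I_{\mu_B} &= E\cup\{L,\dots,L+d-1\}.
\end{aligned}
\]
Combining the ranges shows $I_{\sigma_{B,v}}\sqcup I_{\mu_{B,v}} = I_{\sigma_B}\sqcup I_{\mu_B}$: each side is $E$ (twice) together with the full block $\{L,\dots,L+2d-1\}$ (once). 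This equality of content is exactly what opens the door to a common relation.

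Now I would set $S:=E\cup\{L,\dots,L+j-1\}$, $T:=E\cup\{L+j,\dots,L+2d-1\}$ and $s:=d-j$, so that $|S|=r-s$ and $|T|=r+s$, and invoke the generalized $s$-fold exchange Plücker relation $\sum_{U}\pm\,p_{S\cup U}\,p_{T\setminus U}=0$, the sum running over the $s$-element subsets $U\subseteq T\setminus S=\{L+j,\dots,L+2d-1\}$. Taking $U=\{L+d+j,\dots,L+2d-1\}$ produces the term $p_{\sigma_{B,v}}p_{\mu_{B,v}}$, while $U=\{L+j,\dots,L+d-1\}$ produces $p_{\sigma_B}p_{\mu_B}$; both sets have size $s$ and are disjoint from $S$, so the two terms genuinely appear. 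This is the required relation.

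The only real work is the bookkeeping of the second paragraph — matching each of the four partitions to its vertical-step labels inside $B$ and checking that the two exchange sets $U$ reproduce exactly these partitions — together with the standard fact that the exchange relation above lies in the ideal of $\Gr(n,r)$ (see \cite{combinatorial}). I expect the main subtlety to be organizational rather than deep: keeping the consecutive-block arithmetic correct for arbitrary $j$, and observing that once $v$ is away from the corners of $B$ one has exchange order $s=d-j>1$, so no single three-term relation suffices and the multi-exchange relation is genuinely needed.
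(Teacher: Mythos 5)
Your proof is correct and follows essentially the same route as the paper: compute the four index sets inside $B$ explicitly, observe the multiset equality $I_{\sigma_{B,v}} \sqcup I_{\mu_{B,v}} = I_{\sigma_B} \sqcup I_{\mu_B}$, and exhibit both products as terms of a generalized (multi-exchange) Pl\"ucker relation. The only difference is cosmetic and to your advantage: your single uniform choice of exchange data $S = E \cup \{L,\dots,L+j-1\}$, $T = E \cup \{L+j,\dots,L+2d-1\}$, $s = d-j$ avoids the paper's case split on $2a \ge d$ versus $2a < d$ and its two separate choices of shuffle sets $J_s$, $J_t$.
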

\begin{proof}
  Suppose that $v$ is the $a$th  $d$-diagonal vertex in the box $B$, and suppose that the label on the bottom left vertical step of $B$ is $i$. Then $I(\sigma_{B,v})$ differs from $I(\mu_{B,v})$ only in the range $M:=\{ j \mid i \leq j \leq i+2d-1\}$.   It is easy to see that 
\[I(\sigma_{B,v}) \cap M = \{i,i+1,\dots,i+a-1, i+a+d,\dots, i+2d-1\},\]
and
\[I(\mu_{B,v}) \cap M = \{i+a,\dots,i+2a-1,i+2a,\dots,i+a+d-1\}.\]
If $I(\sigma_{B,v})=\{s_1 < \cdots <s_r\}$ and $I(\mu_{B,v})=\{t_1<\cdots< t_r\}$, then choose $p$ such that $s_p=i+a-1$. So we can obtain $I(\sigma_B)$ and $I(\mu_B)$ from $I(\sigma_{B,v})$ and $I(\mu_{B,v})$ by swapping $s_{p+1}<\cdots<s_{d-a+p}$ with $t_{p+1}<\dots<t_{d-a+p}$.  This implies that there is a Pl\U cker relation where both terms appear. For completeness, we construct a relation using \cite[equation 3.1.3]{manivel} in the case when $2a \geq d$ and in the case where $2a<d$.

In each case, we will identify subsets of indices $J_s=\{s_{i_1},\dots,s_{i_l}\}$ and $J_{t}=\{t_{j_1},\dots,t_{j_{r-l}}\}$. Let $S$ be the symmetric group permuting the symbols in the union $J_s \cup J_t$, and $S_1 \times S_2$ the subgroup of $S$ that preserves $I(\sigma_{B,v})$ and $I(\mu_{B,v})$. For $\omega \in S/(S_1 \times S_2)$, set
\[\omega(I(\sigma_{B,v}))=\{\omega(s_{i}): s_i \in J_s \} \cup \{s_i: s_i \not \in J_s\},\]
and 
\[\omega(I(\mu_{B,v}))=\{\mu(t_{i}): t_i \in J_t \} \cup \{t_i: t_i \not \in J_t\}.\]
Then for any choice of $J_s$ and $J_t$, the following equation is a Pl\U cker relation:
\[\sum_{\omega \in S/S_1 \times S_2}\sign(\omega) p_{\omega(I(\sigma_{B,v}))} p_{\omega(I(\mu_{B,v}))}.\]
When $2a \geq d$, we choose 
\[J_s=\{i+a+d,\dots,i+2d-1\},\: J_t=\{i+k,\dots,i+2a-1\} \cup M.\]
When $2a < d$, we choose 
\[J_s=\{i,\dots,i+a-1\}\cup M,\: J_t=\{i+2k,\dots,i+d+a-1\}.\]
Note that for each choice the terms $p_{\sigma_{B,v}} p_{\mu_{B,v}}$ and $p_{\sigma_{B}} p_{\mu_{B}}$ appear with non-zero coefficient as claimed.
\end{proof}
From the proof of Theorem \ref{thm:groupiso}, the weight of the ratio $p_{\sigma_{B,v}} p_{\mu_{B,v}}/(p_{\sigma_{B}} p_{\mu_{B}})$ can be expressed as a ratio of the weights of $d_w$ where $w$ ranges over the internal vertices. Let $f_{B,v}(d_w)$ be this ratio.
\begin{lem} The group $G$ acts trivially on the product  $(\prod_{(B,v)} f_{B,v})^{\frac{n}{d}}$, where $(B,v)$ ranges over internal diagonal vertices in the $d \times d$ boxes $B$ of the ladder diagram. 
\end{lem}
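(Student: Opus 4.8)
The plan is to strip away the coefficients $d_w$ entirely and reduce the statement to a congruence counting how often each arrow of $\LQ$ is covered by the paths involved. By the \emph{definition} of $f_{B,v}$, its $G$-weight equals the $G$-weight of the ratio
\[
  R_{B,v} := \frac{p_{\sigma_{B,v}} p_{\mu_{B,v}}}{p_{\sigma_B} p_{\mu_B}},
\]
so $G$ acts trivially on $\bigl(\prod_{(B,v)} f_{B,v}\bigr)^{n/d}$ if and only if it acts trivially on $\bigl(\prod_{(B,v)} R_{B,v}\bigr)^{n/d}$. Since $G = \tilde G/(\tilde G \cap T)$, a character of $G$ is trivial exactly when its pullback to $\tilde G$ is trivial; hence it is enough to show the $\tilde G$-weight of $\bigl(\prod_{(B,v)} R_{B,v}\bigr)^{n/d}$ is trivial. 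Recalling that $\tilde G$ acts on $p_\lambda$ with weight $\prod_{a \in \lambda} \psi_a$, this weight is $\prod_{a \in \LQ_1} \psi_a^{(n/d)\Phi_a}$, where
\[
  \Phi_a := c_a(\mathcal N) - c_a(\mathcal D), \qquad \mathcal N := \{\sigma_{B,v}, \mu_{B,v}\}_{(B,v)}, \quad \mathcal D := \{\sigma_B, \mu_B\}_{(B,v)},
\]
and $c_a(S)$ denotes the number of paths in the multiset $S$ whose arrow-sequence contains $a$.

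Because $\tilde G = \{(\psi_a) : \psi_a^n = 1,\ \prod_a \psi_a = 1\}$, the character group $\widehat{\tilde G}$ is $(\ZZ/n)^{\LQ_1}/\langle \mathbf 1\rangle$, so the weight above is trivial precisely when $(n/d)\Phi_a$ is independent of $a$ modulo $n$ -- that is, precisely when $\Phi_a \bmod d$ is constant. This is the combinatorial heart of the lemma, and everything now takes place on the ladder diagram. First I would record that $\sigma_B, \mu_B$ are independent of $v$, so each boundary pair occurs in $\mathcal D$ with multiplicity $d-1 \equiv -1 \pmod d$; and that all four partitions agree outside $B$, so $\Phi_a$ is fed only by arrows meeting the box $B$.

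Next I would carry out the local count inside a single $d \times d$ box. Writing the internal diagonal vertices as $v = (\alpha,\alpha)$ with $1 \le \alpha \le d-1$, the union $\sigma_{B,v} \cup \mu_{B,v}$ consists of the full vertical and horizontal lines through $v$ together with the two outer arcs bounding them, whereas $\sigma_B \cup \mu_B$ is the entire boundary of $B$. Summing the resulting per-vertex contributions over $\alpha = 1, \dots, d-1$ gives $+1$ on each unit step interior to the grid and $-(d-1)$ on each step lying on a shared edge between two adjacent boxes; since $-(d-1) \equiv 1 \pmod d$, this yields $\Phi_a \equiv 1 \pmod d$ for every arrow that is an honest unit step between internal crossings. (The case $n=4$, $r=2$ already exhibits the phenomenon, with the two long corner arrows receiving $\Phi_a = -1 \equiv 1 \pmod 2$.)

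The one genuine difficulty is the \emph{long} arrows: those emanating from the source corner or arriving at the sink corner run along the global boundary and may share unit steps, so the naive unit-step bookkeeping breaks down there. To finish I would use the exact linear relations $\sum_{a \,:\, b_{va}=1} \Phi_a = 0$, valid for every non-source vertex $v$ because every corner-to-corner path meets the set $\{a : b_{va}=1\}$ exactly once and $|\mathcal N| = |\mathcal D|$; equivalently $B\Phi = 0$ for the weight matrix $B$ of \eqref{eq:weightmatrix}. Each row of $B$ sums to $n \equiv 0 \pmod d$, so the all-ones vector lies in $\ker(B \bmod d)$, and therefore so does $\Phi - \mathbf 1$, which we already know vanishes on all interior arrows. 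Propagating these relations vertex by vertex -- at each step there is exactly one long arrow in $\{a : b_{va}=1\}$ not yet pinned down -- and using $n \equiv 0 \pmod d$ forces $\Phi_a \equiv 1 \pmod d$ on the long arrows as well, completing the proof. \textbf{The main obstacle} is precisely this last step: controlling the long boundary arrows, equivalently verifying that no nonzero class in $\ker(B \bmod d)$ is supported on the boundary arrows, so that the propagation is forced to terminate correctly.
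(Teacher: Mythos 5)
Your reduction and local count are sound, and up to that point you are essentially reconstructing the paper's own computation: the weight of the product is $\prod_a \psi_a^{\Phi_a}$ with $\Phi_a$ a covering count, triviality of the $(n/d)$-th power is equivalent to $\Phi_a$ being constant mod $d$, and the per-box overlay gives $+1$ on unit steps interior to a box and $-(d-1)$ on shared box edges (these are exactly the labels $0,-1,\dots,-d+1$ in the paper's figure). The genuine gap is your final step, and it cannot be repaired in the form you propose, because the fact you flag as "the main obstacle" is false: $\ker(B \bmod d)$ \emph{does} contain nonzero classes supported on the boundary arrows. Concretely, both corner-to-corner arrows $a_{br}$ (bottom edge plus right edge) and $a_{lt}$ (left edge plus top edge) lie in the support of \emph{every} row of $B$ --- $a_{lt}$ has a horizontal step above every column and $a_{br}$ a vertical step to the right of every row, as is visible in Example \ref{eg:1set}, where both corner paths are highlighted in every diagram. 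Hence the difference $\delta_{a_{br}} - \delta_{a_{lt}}$ of standard basis vectors lies in $\ker B$ over $\ZZ$, is supported on boundary arrows, and is nonzero mod $d$. The relations therefore only ever constrain the sum $\Phi_{a_{br}} + \Phi_{a_{lt}}$, and knowing that this sum is $\equiv 2 \pmod d$ does not force each term to be $\equiv 1$. The propagation also has no seed in small cases: for $\Gr(4,2)$ there are no unit-step arrows between internal crossings at all, and for $\Gr(6,2)$ the row of $B$ at the vertex adjacent to the source consists of six source-emanating arrows, with (for instance) the sum of the basis vectors of the two length-two arrows out of the source giving yet another kernel class mod $2$ supported on long arrows. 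So no ordering of vertices can make "exactly one long arrow not yet pinned down" true.

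What is needed instead --- and what the paper's pictorial proof supplies --- is a direct computation of $\Phi_a$ on each long arrow, by checking which of the four paths $\sigma_{B,v}$, $\mu_{B,v}$, $\sigma_B$, $\mu_B$ contain the \emph{entire} arrow (containment of a long arrow requires all of its unit steps, so very few pairs $(B,v)$ contribute). One finds: a long arrow whose single step into the grid lies on a non-border column or row receives $+1$, from the unique pair $(B,v)$ whose line through $v$ meets that step; a long arrow contained entirely in the box borders, including each of the two corner arrows, receives $-(d-1)$, contributed only by the box whose border path $\sigma_B$ or $\mu_B$ contains it (for the corner arrows, the corner box alone). Both values are $\equiv 1 \pmod d$, completing the proof. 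This case analysis on the long arrows is exactly what your linear-algebra step was designed to avoid, and it cannot be avoided; everything before it can be kept as is.
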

\begin{proof} When $d=1$, the statement is trivial. To prove the lemma, we will show that the weight of $\prod_{(B,v)} f_{B,v}$ is an order $n/d$ element of $G$. We will describe this element precisely. Let $r=sd$ and $n-r=td$.  We have divided the ladder diagram into a $s \times t$ grid of $d\times d$ boxes. Consider the collection $C$ of arrows entirely contained in the border of the $d \times d$ boxes. Then we will show that the weight of $\prod_{(B,v)} f_{B,v}$
is
\begin{equation} \label{eq:weight} \prod_{a \in C} \zeta_a^{-d}.\end{equation}
This is an order $n/d$ element of $G$ as desired. The easiest way to see that this weight is correct is to visualize the weights using paths in the ladder diagram. We will indicate a weight by drawing paths in the ladder diagram, possibly in reverse, and allowing multiple arrows. Each arrow appearing contributes a weight of $\zeta_a^{b^+-b^-}$, where $b^+$ is the number of times $a$ appears in the correct direction and $b^-$ is the number of times it appears in the reverse direction. For example, the weight of $f_{B,v}$ is pictured as
\[\begin{tikzpicture}[scale=0.4]
\draw[green,thick,-stealth] (0,0)--(3,0);
\draw[green,thick,-stealth]  (3,0)--(3,5);
\draw[green,thick,-stealth]  (3,5)--(5,5);
\draw[green,thick,-stealth] (0,0)--(0,3);
\draw[green,thick,-stealth] (0,3)--(5,3);
\draw[green,thick,-stealth](5,3)--(5,5);
\draw[green,thick,stealth-] (0,-0.2)--(5.2,-0.2);
\draw[green,thick,stealth-] (5.2,-0.2)--(5.2,5);
\draw[green,thick,stealth-] (-0.2,0)--(-0.2,5.2);
\draw[green,thick,stealth-] (-0.2,5.2)--(5,5.2);
\draw[fill] (3,3) circle (2pt);
\node at (3.3,2.7) (A) {\tiny $v$};
\end{tikzpicture}\]
Now consider the weight of the product of $f_{B,v}$ where $B$ is fixed: $\prod_{v} f_{B,v}$: pictorially, this corresponds to overlaying the diagrams for each of internal vertices $v$. Note that the arrows going in positive direction have no intersection except on the border, and every such arrow is covered exactly once. Depending on where $B$ is located in the ladder diagram, there could be vertices along the borders of $B$ or not, which changes how many times each arrow appears. If there are no vertices on a particular edge, then there are $d-1$ arrows in the reverse direction along that edge (one for each internal vertex). If there are border vertices, then the number of arrows follows a pattern as below (where $-k$ indicates $k$ arrows in the reverse direction of the arrow directly below the label):
\[\begin{tikzpicture}[scale=1]
\draw[fill] (0,0) circle (2pt);
\draw[fill] (0,1) circle (2pt);
\draw[fill] (0,2) circle (2pt);
\draw[fill] (0,4) circle (2pt);
\draw[fill] (0,5) circle (2pt);

\draw[fill] (5,0) circle (2pt);
\draw[fill] (5,1) circle (2pt);
\draw[fill] (5,2) circle (2pt);
\draw[fill] (5,4) circle (2pt);
\draw[fill] (5,5) circle (2pt);

\draw[fill] (1,0) circle (2pt);
\draw[fill] (2,0) circle (2pt);
\draw[fill] (4,0) circle (2pt);

\draw[fill] (1,5) circle (2pt);
\draw[fill] (2,5) circle (2pt);
\draw[fill] (4,5) circle (2pt);

\draw (0,0) rectangle (5,5);

\node at (-0.5,0.5) (A) {\tiny$0$};
\node at (-0.5,1.5) (A1) {\tiny$-1$};
\node at (-0.5,3) (A2) {\tiny$\vdots$};
\node at (-0.8,4.5) (A3) {\tiny$-d+1$};

\node at (5.5,4.5) (B) {\tiny$0$};
\node at (5.5,3) (B1) {\tiny$\vdots$};
\node at (5.8,1.5) (B2) {\tiny $-d+2$};
\node at (5.8,0.5) (B3) {\tiny$-d+1$};

\node at (0.5,-0.5) (C) {\tiny$0$};
\node at (1.5,-0.5) (C1) {\tiny$-1$};
\node at (3,-0.5) (C2) {\tiny$\cdots$};
\node at (4.5,-0.5) (C3) {\tiny$-d+1$};

\node at (0.3,5.5) (D) {\tiny$-d+1$};
\node at (1.5,5.5) (D1) {\tiny$-d+2$};
\node at (3,5.5) (D2) {\tiny$\cdots$};
\node at (4.5,5.5) (D3) {\tiny$0$};
\end{tikzpicture}\]
The weight of the product over all boxes $B$ -- i.e. the weight of $\prod_{(B,v)} f_{B,v}$ -- is read off by combining all of the $d \times d$ boxes to cover the ladder diagram. It is immediate from the above diagram that this gives us the weight
\[\prod_{a \in C} \zeta_a^{-(d-1)} \prod_{a \not \in C} \zeta_a.\]
Since $\prod_{a} \zeta_a=1$, this shows that the weight is \eqref{eq:weight} as desired. 
\end{proof}
The lemma states that the action of $G$ on $(\prod_{(B,v)} f_{B,v})^{\frac{n}{d}}$ is trivial, so the equation $(\prod_{(B,v)} f_{B,v})^{\frac{n}{d}}=1$ is homogeneous. If we wish, since the left hand side is a monomial, we can clear the denominator to get a polynomial equation. We call the set containing this single polynomial equation
\begin{equation}\label{eq:poly2}
\mathcal{P}_2.
\end{equation}
This is the last equation we need to define our family. 

\begin{mydef} Let $Z \subset \PP^{\binom{n}{r}-1} \times B \xrightarrow{\pi} B$ denote the family cut out by the following three types of the equations:
\begin{enumerate}
\item the Pl\U cker relations,
\item the polynomial equations $\mathcal{P}_1$ from \eqref{eq:poly1},
\item the polynomial equation $\mathcal{P}_2$ from \eqref{eq:poly2}.
 \end{enumerate}
 The last two equations cut out $B \subset \CC^m$. 
\end{mydef}
By construction, $Z$ is a $G$-invariant algebraic variety. We use notation as for the family $\tilde{Z}$ in \eqref{eq:tildefamily}.  Note that the fiber $Z_{\underline{c}_0}$ is invariant under the $G$ action, but $Z_{\underline{c}_1}$ is not. However, since the action of $G$ on on $\CC^m$ is of order $n$, the disconnected variety $\hat{Z}_{\underline{b}}:=Z \cap \pi^{-1}(\{(c_1,\dots,c_m) \in B: c_i^n=b_i\})$ is $G$-invariant for any $\underline{b} \in \CC^m$. We will consider the quotient family $Z/G \to B/G$.

The next result gives the precise sense in which $\Gr(n,r)/H_{n,r}$ is a smoothing of $X_{P_{n,r}}/G$. 
\begin{thm}\label{thm:Gsmooth} Suppose $b$ is generic (i.e. $b_i \neq 0$ for all $i$). Then $\hat{Z}_{\underline{b}}/G$ is a fiber of the quotient family $Z/G \to B/G$, and $\hat{Z}_{\underline{b}}/G \cong \Gr(n,r)/H_{n,r}$. 
\end{thm}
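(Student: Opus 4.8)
The plan is to exhibit $\hat{Z}_{\underline{b}}$ as a disjoint union of copies of $\Gr(n,r)$ permuted by $G$, and to show that this permutation action is free and transitive with stabiliser $H_{n,r}$, so that the quotient collapses all the copies onto a single $\Gr(n,r)/H_{n,r}$. Write $S_{\underline{b}}:=\{\underline{c}\in B : c_i^n=b_i\}$, so that $\hat{Z}_{\underline{b}}=\pi^{-1}(S_{\underline{b}})$. Since $b$ is generic, every $\underline{c}\in S_{\underline{b}}$ has all coordinates nonzero, and the fibre $Z_{\underline{c}}\subset\PP^{\binom{n}{r}-1}$ is cut out by the Pl\"ucker relations in which all of the degenerating monomials carry \emph{nonzero} coefficients. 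Rescaling the Pl\"ucker coordinates $p_\lambda$ by a suitable element of the big torus $(\CC^*)^{\binom{n}{r}}$ turns these into the honest Pl\"ucker relations, because $\mathcal{P}_1$ is precisely the set of relations satisfied by the big-torus weights of the coefficients; hence $Z_{\underline{c}}\cong\Gr(n,r)$. Thus $\hat{Z}_{\underline{b}}=\bigsqcup_{\underline{c}\in S_{\underline{b}}}Z_{\underline{c}}$ is a disjoint union of $|S_{\underline{b}}|$ Grassmannians.

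Next I would study the $G$-action. The group $G$ acts on the base $\CC^m$ by scaling each $c_i$ by an $n$-th root of unity, and by definition $G_h$ is the kernel of this action; hence $G/G_h$ acts faithfully on $\CC^m$. An element $g\in G$ carries $Z_{\underline{c}}$ isomorphically onto $Z_{g\cdot\underline{c}}$, so $G/G_h$ permutes the components of $\hat{Z}_{\underline{b}}$ while $G_h$ stabilises each one. On a point with all coordinates nonzero a faithful diagonal action by roots of unity is automatically free, so $G/G_h$ acts \emph{freely} on $S_{\underline{b}}$. By Theorem~\ref{thm:groupiso} we have $G_h\cong H_{n,r}$, and through the isomorphism $\Psi$ together with Lemma~\ref{lem:imagepreserves} its action on a fixed fibre $Z_{\underline{c}_0}\cong\Gr(n,r)$ is exactly the standard $H_{n,r}$-action on the Grassmannian.

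It remains to prove that $G/G_h$ acts \emph{transitively} on $S_{\underline{b}}$, so that $S_{\underline{b}}$ is a single $G$-orbit in $B$; granting this, $S_{\underline{b}}$ maps to one point of $B/G$, which shows both that $\hat{Z}_{\underline{b}}/G$ is a genuine fibre of $Z/G\to B/G$ and that it equals $Z_{\underline{c}_0}/G_h\cong\Gr(n,r)/H_{n,r}$. Set $N:=(r-1)(n-r-1)$ and $d:=\gcd(n,r)$. After imposing the relations $\mathcal{P}_1$ of \eqref{eq:poly1} every coordinate $c_i$ is a monomial in the basic coefficients $d_v$ attached to the $N$ internal vertices of $\LQ$, so $S_{\underline{b}}$ is identified with the subset of the $\mu_n^{N}$-torsor $T_{\underline{b}}:=\{(d_v):d_v^n=b_v\}$ cut out by the single equation $\mathcal{P}_2$ of \eqref{eq:poly2}. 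Now the $G$-action on $T_{\underline{b}}$ is through the subgroup $G/G_h\hookrightarrow\mu_n^{N}$, whose order is $|G|/|H_{n,r}|=n^{\,r(n-r)-1}/(d\,n^{\,n-2})=n^{N}/d$; thus $G/G_h$ sits inside $\mu_n^{N}$ as a subgroup of index $d$. Being free, $G/G_h$ partitions the $\mu_n^N$-torsor $T_{\underline{b}}$ into exactly $d$ orbits of size $n^N/d$. The preceding lemma shows that $(\prod_{(B,v)}f_{B,v})^{n/d}$ is $G$-invariant but transforms under $\mu_n^N$ by a character of order $d$, so it takes $d$ distinct values, one on each orbit, and the equation $\mathcal{P}_2$ selects precisely one of them. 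Hence $S_{\underline{b}}$ is a single free $G/G_h$-orbit, as required.

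The main obstacle is this last count, and within it the claim that $\mathcal{P}_2$ separates the $d$ orbits rather than being vacuous or over-restrictive. This is exactly where the weight computation of the preceding lemma is needed: one must know that $\prod_{(B,v)}f_{B,v}$ has weight of order exactly $n/d$ as a character of $G$, equivalently order $n$ as a character of the ambient $\mu_n^N$, so that its $(n/d)$-th power descends to a generator of the order-$d$ character group of $\mu_n^N/(G/G_h)$. When $d=1$ the equation $\mathcal{P}_2$ is vacuous, $T_{\underline{b}}$ is a single orbit, and the argument reduces to the coprime case worked out in Example~\ref{eg:gr52deg}.
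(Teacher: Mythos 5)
Your proposal follows the paper's proof in all essentials: both decompose $\hat{Z}_{\underline{b}}$ into the fibres over the finite set $S_{\underline{b}}$, both get freeness of the $G/G_h$-action from the fact that a faithful diagonal action by roots of unity is free at points with all coordinates nonzero, both use $\mathcal{P}_1$ to identify $S_{\underline{b}}$ with a subset of the torsor $\{(d_v): d_v^n=b_v\}$ under $\mu_n^N$ (with $N=(r-1)(n-r-1)$, your notation), and both then force transitivity by a counting argument involving $\mathcal{P}_2$, concluding that the quotient is a single fibre modulo $G_h\cong H_{n,r}$. The paper runs the count as a lower bound (orbit size) against an upper bound (solutions of $\mathcal{P}_1$ and $\mathcal{P}_2$); you run it as ``$d$ cosets of $\iota(G/G_h)$ in $\mu_n^N$, of which $\mathcal{P}_2$ selects exactly one'' --- these are the same argument. (Your side claim that \emph{every} fibre over $S_{\underline{b}}$ is a Grassmannian, via big-torus rescaling, quietly identifies $\mathcal{P}_1$ --- defined in the paper by mod-$n$ weight data for $G$ --- with the integral big-torus weight relations; this is unproved, but also unnecessary, since after transitivity only the fibre over $(1,\dots,1)$ needs to be identified, exactly as in the paper.)

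The genuine problem is the justification of the selection step, which is the crux of transitivity. The unlabeled Lemma preceding the theorem computes the weight of $\prod_{(B,v)} f_{B,v}$ only as a character of $G$ (equivalently of $\tilde{G}$, via the paths-in-the-ladder-diagram calculation); it says nothing about the character $\chi$ of $\mu_n^N$ by which this monomial transforms, because that is determined by the actual exponent vector of $\prod_{(B,v)} f_{B,v}$ in the $d_v$, which the Lemma never writes down. What the Lemma yields is precisely that $\chi^{n/d}$ is trivial on $\iota(G/G_h)$, i.e.\ $\iota(G/G_h)\subseteq \ker(\chi^{n/d})$ --- the $G$-invariance of $\mathcal{P}_2$. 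Transitivity needs the \emph{reverse} inclusion, equivalently that $\chi^{n/d}$ has order exactly $d$ in the character group of $\mu_n^N$. Your bridge --- ``order exactly $n/d$ as a character of $G$, equivalently order $n$ as a character of the ambient $\mu_n^N$'' --- is not an equivalence: restriction of a character to a subgroup of index $d$ can drop its order by any factor dividing $d$, so a character restricting to order $n/d$ need not have order $n$ (for instance, for $n=6$, $d=2$ a character of order $2$ on $\mu_6^N$ can restrict to a character of order $2=n/d$ on an index-$2$ subgroup), and conversely a character of order $n$ need not restrict to order $n/d$. So this step remains unproved in your argument. In fairness, the paper asserts the corresponding bound in one sentence (``the single equation in $\mathcal{P}_2$ is of the form $f^{n/d}$, which means that \dots the largest possible number of components is $n^{(r-1)(n-r-1)-1}n/d$'') with no more justification, so your write-up is at parity with the paper's; but the ``equivalently'' should be deleted, and the order-$d$ statement either proved from an explicit choice of the monomials $f_{B,v}$ or isolated as the needed hypothesis.
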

\begin{proof} For simplicity, we will set $b_i=1$ for all $i$, and denote $\hat{Z}_{\underline{b}}/G$ by $\hat{Z}_1/G$. The variety $\hat{Z}_1$ is disconnected, with components we can label $A_1 \sqcup \cdots \sqcup A_s$. One component, which we label $A_1$, is the Grassmannian $\Gr(n,r)$. The quotient can be studied in two steps:
\[\hat{Z}_1/G=(\bigsqcup A_i/G_h)/(G/G_h).\]
Note that the stabilizer of any element of $\bigsqcup A_i/G_h$ under the $G/G_h$-action is trivial, as only $G_h$ preserves the components. Thus the size of an orbit of a point in $\bigsqcup A_i/G_h$ is
\[|G/G_h|=n^{r(n-r)-1}/(n^{n-2} d)=n^{r(n-r)-n} n/d=n^{(r-1)(n-r-1)-1} n/d.\]
In particular, the number of components $s$ is at least $n^{(r-1)(n-r-1)-1} n/d$. On first glance, it may appear that $\hat{Z}_1$ has $n^m$ components, but in fact, the equations from $\mathcal{P}_1$ and $\mathcal{P}_2$ cut this down considerably. The equations from $\mathcal{P}_1$ mean that we can regard all coefficients $c_i$ as a function of the coefficients $d_v$, where $v$ is an internal vertex of the ladder quiver, so there are at most $n^{(r-1)(n-r-1)}$ components. The single equation in $\mathcal{P}_2$ is of the form $f^{n/d},$ which means that in fact the largest possible number of components is $n^{(r-1)(n-r-1)-1} n/d$. So 
\[s=n^{(r-1)(n-r-1)-1} n/d.\]
This means that the residual $G/G_h$ action identifies each of the components $A_i/G_h$, so 
\[\hat{Z}_1/G=A_1/G_h \cong \Gr(n,r)/H_{n,r}.\]
This also shows that the orbit of a point in the base of the family $Z$ is precisely the set $\{(c_1,\dots,c_m) \in B: c_i^n=b_i\}$.
\end{proof}
\subsection{Smoothing $Y_{n,r}$ and $Y_{n,r}/G$} 
Recall from Theorem \ref{thm:vgit} that there is a toric variety $Y_{n,r}$ characterized as a blow-up of $X_{P_{n,r}}$ obtained by adding the rays required to take $P_{n,r}$ to $Q_{n,r}$. The variety $Y_{n,r}$ is of interest as it fits into the following diagrams:
\begin{center}
  \begin{tikzpicture}\small
  \node at (0,0) (A) {$X_{P_{n,r}}$};
  \node at (4,0) (B) {$X_{Q_{n,r}}$};
  \node at (2,1) (C) {$Y_{n,r}$};
  \draw[dashed, ->](C)--(B)  node[align=right,pos=0.25,right] {\tiny \: VGIT};
  \draw[->](C)--(A)  node[pos=0.25,left] {\tiny blow-up \:};
  \node at (2,0) (s) {};
  \node at (2,-1) (t) {};
  \draw[<->](s)--(t) node[align=left,midway,right]{
    \begin{minipage}{1.2cm}
      \tiny mirror symmetry
    \end{minipage}};
  \node at (4,-1) (B1) {$X_{P_{n,r}}/G$};
  \node at (0,-1) (A1) {$X_{Q_{n,r}}/G$};
  \node at (2,-2) (C1) {$Y_{n,r}/G$};
  \draw[->](C1)--(B1)  node[pos=0.1,right] {\tiny \: \: blow-up};
  \draw[->,dashed](C1)--(A1)  node[align=left,pos=0.1,left] {\tiny VGIT \:\:};
   \end{tikzpicture}
\end{center}
$Y_{n,r}$ is the toric variety associated to the fan obtained from the spanning fan of $P_{n,r}$ by blowing up at the rays $v_{\lambda} \in N$, for $\lambda \in \Bnr$ a redundant partition. This depends on the order in which the blow-up is completed. 
 
 Each $v_{\lambda}$ lies in the interior of an intersection of maximal cones of the spanning fan of $P_{n,r}$.  Maximal cones of this fan are \emph{also} indexed by partitions, but not just those in $\Bnr$: there is a maximal cone $C_\mu$ for each $\mu \in \Pnr$ corresponding to the section of $\mathcal{O}(1)$ indexed by $\mu$. 
 \begin{mydef} For each $\lambda \in \Bnr$, let $\Cones(\lambda):=\{\mu \in \Pnr: v_\lambda \in C_\mu\},$ the collection of all maximal cones of the spanning fan of $P_{n,r}$ that contain $v_\lambda$. Fix an order on $\Bnr$. Let $\Bl \PP$ denote the iterated blow-up of $\PP^{\binom{n}{r}-1}$ at the linear subspaces
 \[Z(p_\mu: \mu \in \Cones(\lambda))\]
 where we iterate over $\lambda \in \Bnr$. 
 We define $\Bl \Gr(n,r)$ to be the iterated blow-up of $\Gr(n,r)$ at these same subspaces.
 \end{mydef} 
Consider the Cox ring of of $\Bl \PP$.  In addition to the $p_\mu$,~$\mu \in \Pnr$, there is a generator $x_\lambda$ for each $\lambda \in \Bnr$ graded by the Cartier divisor corresponding to the ray added when blowing up at the $\lambda$th step.  Now instead of blowing up $X_{P_{n,r}}$ using toric geometry, we can view it as a subvariety of $\PP^{\binom{n}{r}-1}$, then compute the blow-up of $X_{P_{n,r}}$ as the main component of its proper transform in the ambient blow-up.

 To write down the binomial equations, we adjust the binomial equations cutting $X_{P_{n,r}}$ out of $\PP^{\binom{n}{r}-1}$. Let $\sigma_1$ and $\sigma_2$ be an incomparable pair, giving rise to the binomial equation
 \[p_{\sigma_1} p_{\sigma_2}-p_{\sigma_1 \vee \sigma_2}p_{\sigma_1 \wedge \sigma_2}.\]
 Although this homogeneous for $\PP^{\binom{n}{r}-1}$, it is not when considered as an equation in the Cox ring of $\Bl \PP$. However, there is a natural minimal way to homogenize this equation by multiplying each factor by some combination of generators $x_\lambda$,~$\lambda \in \Bnr$.   Call the subvariety cut out of $\Bl \PP$ by these adjusted equations $\Bl X_{P_{n,r}}$. 
  \begin{lem} The subvariety $\Bl X_{P_{n,r}}$ is the image of $Y_{n,r}$ under the natural embedding $Y_{n,r} \to \Bl \PP$.
 \end{lem}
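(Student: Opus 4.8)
The plan is to realise the embedding as a toric morphism and then match its image, scheme‑theoretically, against the vanishing locus of the adjusted binomials, working throughout in the Cox ring of $\Bl \PP$. Recall that $\Bl \PP$ is itself toric: its fan is the iterated star subdivision of the fan of $\PP^{\binom{n}{r}-1}$ along the cones cut out by the coordinate subspaces $Z(p_\mu : \mu \in \Cones(\lambda))$, and its Cox ring is $\CC[\,p_\mu,\, x_\lambda \mid \mu \in \Pnr,\ \lambda \in \Bnr\,]$. Likewise, identifying $P_{n,r}$ with $\overline{Q}_{n,r}$ via Theorem~\ref{thm:forgetvertices}, the variety $Y_{n,r}$ is toric with Cox ring $\CC[\,y_a,\, z_\lambda \mid a \in \LQ_1,\ \lambda \in \Bnr\,]$, one variable per ray: the $y_a$ for the rays of $X_{P_{n,r}}$ (indexed by arrows, i.e.\ by $\Anr$) and $z_\lambda$ for the exceptional ray over $v_\lambda$. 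Since $X_{P_{n,r}} \hookrightarrow \PP^{\binom{n}{r}-1}$ is a toric closed immersion and the two blow-up sequences are star subdivisions along compatible centres, the natural map $\iota\colon Y_{n,r}\to \Bl\PP$ is the induced toric morphism, and on Cox rings it is
\[
  x_\lambda \longmapsto z_\lambda,
  \qquad
  p_\mu \longmapsto \Big(\prod_{a \in \mu} y_a\Big)\prod_{\lambda \in \Bnr} z_\lambda^{\,e_{\lambda\mu}} ,
\]
where $e_{\lambda\mu}$ records the multiplicity with which the hyperplane $\{p_\mu=0\}$ contains the $\lambda$-th centre. The first factor is exactly the Pl\U cker monomial $s_\mu$, and the combinatorial point to verify is that $e_{\lambda\mu}>0$ precisely when $\mu \in \Cones(\lambda)$, which is what ties the centre $Z(p_\mu:\mu\in\Cones(\lambda))$ to the single ray $v_\lambda$.

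For the inclusion $\iota(Y_{n,r}) \subseteq \Bl X_{P_{n,r}}$ I would pull each adjusted binomial back along the displayed map. Starting from an incomparable pair $\sigma_1,\sigma_2$, the untwisted relation $p_{\sigma_1}p_{\sigma_2}-p_{\sigma_1\vee\sigma_2}p_{\sigma_1\wedge\sigma_2}$ already has equal $y$-parts on its two terms because the two products carry the same multiset of arrows. Writing $\delta_\lambda := (e_{\lambda,\sigma_1\vee\sigma_2}+e_{\lambda,\sigma_1\wedge\sigma_2})-(e_{\lambda\sigma_1}+e_{\lambda\sigma_2})$, the minimal homogenisation multiplies the first term by $\prod_\lambda x_\lambda^{\max(\delta_\lambda,0)}$ and the second by $\prod_\lambda x_\lambda^{\max(-\delta_\lambda,0)}$. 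After substituting $x_\lambda\mapsto z_\lambda$, the two $z_\lambda$-exponents agree, since
\[
  \bigl(e_{\lambda\sigma_1}+e_{\lambda\sigma_2}\bigr)+\max(\delta_\lambda,0)
  = \bigl(e_{\lambda,\sigma_1\vee\sigma_2}+e_{\lambda,\sigma_1\wedge\sigma_2}\bigr)+\max(-\delta_\lambda,0),
\]
which is just the identity $\max(t,0)-\max(-t,0)=t$. Hence each adjusted binomial pulls back to $0$ and vanishes on $\iota(Y_{n,r})$. The content hiding in this step is that the ambient multiplicities $e_{\lambda\mu}$ computed in $\Bl\PP$ really do coincide with the multiplicities of the \emph{strict} transform of $X_{P_{n,r}}$ along the exceptional divisors, i.e.\ that there is no excess intersection; this is precisely the assertion that $Y_{n,r}$ is the strict transform and is where Theorem~\ref{thm:vgit} (equality of the relevant Cartier data) feeds in.

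The reverse inclusion is the delicate part and the step I expect to be the main obstacle. Over the open locus $U\subseteq\Bl\PP$ on which $\Bl\PP\to\PP^{\binom{n}{r}-1}$ is an isomorphism, all the $x_\lambda$ are invertible, the adjusted binomials restrict to the ordinary degenerate Pl\U cker relations, and so cut out $X_{P_{n,r}}$ away from the centres; thus $\Bl X_{P_{n,r}}\cap U=\iota(Y_{n,r})\cap U$. Because $\iota(Y_{n,r})$ is irreducible of dimension $r(n-r)$ and is the closure of this common open set, it is the main component of $\Bl X_{P_{n,r}}$, and together with the previous paragraph this yields $\iota(Y_{n,r})=\Bl X_{P_{n,r}}$ once one knows there are no further components supported over the exceptional divisors. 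I would establish this by checking, chart by chart in the Cox ring, that the adjusted binomials generate the full toric (lattice) ideal of the monomial map above: the lattice of relations among the images of the $p_\mu$ is spanned by the incomparable-pair relations together with the tautological substitutions $x_\lambda\mapsto z_\lambda$, exactly because the incomparable-pair binomials already generate the ideal of $X_{P_{n,r}}\subset\PP^{\binom{n}{r}-1}$ and the star subdivisions introduce no new lattice relations. The genuine difficulty is the bookkeeping of the multiplicities $e_{\lambda\mu}$ under the \emph{iterated} blow-up, where an earlier exceptional divisor can move the centre of a later one; controlling this recursion — and thereby ruling out spurious components — is the technical heart of the argument.
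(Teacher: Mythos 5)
The paper states this lemma with no proof at all (it is immediately followed by the next theorem), so there is no argument of the authors to compare yours against; what follows assesses your proposal on its own terms. Your forward inclusion is essentially sound: the description of the Cox ring of $Y_{n,r}$ (one variable per ray, indexed by $\Anr\sqcup\Bnr$ after Theorem \ref{thm:forgetvertices}), the monomial form of $\iota^*$, and the computation via $\max(t,0)-\max(-t,0)=t$ that each minimally homogenised binomial pulls back to zero — using that $s_{\sigma_1}s_{\sigma_2}=s_{\sigma_1\vee\sigma_2}s_{\sigma_1\wedge\sigma_2}$ as monomials in the $y_a$, which is the paper's homogeneity lemma — are all correct, \emph{granting} the combinatorial facts you explicitly defer: that $e_{\lambda\mu}>0$ exactly when $\mu\in\Cones(\lambda)$, and that these ambient multiplicities agree with the orders of vanishing of $s_\mu$ along the divisors of $Y_{n,r}$ over $v_\lambda$. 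Note, however, that Theorem \ref{thm:vgit} cannot supply that last ``no excess intersection'' input as you claim: it compares $Y_{n,r}$ with $X_{Q_{n,r}}$ (same rays, same Cox ring, same anticanonical polytope) and says nothing about how $Y_{n,r}$ sits inside $\Bl\PP$. What is actually needed is that the lattice map induced by the Pl\"ucker embedding of $X_{P_{n,r}}$ carries $v_\lambda$ to a positive multiple of $\sum_{\mu\in\Cones(\lambda)}u_\mu$ (modulo the relation among the rays of $\PP^{\binom{n}{r}-1}$); neither you nor the paper establishes this, and without it your displayed formula for $\iota^*$ — on which everything rests — is unjustified.

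The genuine gap is in the reverse inclusion, which you rightly call the delicate part but then dispose of with a circular argument. You claim the adjusted binomials generate the full toric ideal of the monomial map ``exactly because the incomparable-pair binomials already generate the ideal of $X_{P_{n,r}}\subset\PP^{\binom{n}{r}-1}$ and the star subdivisions introduce no new lattice relations.'' This conflates spanning the lattice of exponent relations with generating the lattice ideal. A binomial ideal whose exponent vectors span the relation lattice need not be saturated; the toric prime is the saturation with respect to the product of all Cox variables, and the discrepancy between an ideal and its saturation is supported precisely on the coordinate hyperplanes — here, precisely on the exceptional divisors $\{x_\lambda=0\}$, which is exactly where the spurious components you must exclude would live. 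Said differently, knowing the equations are correct where all variables are invertible is just your open-set argument repeated, and it yields only that $\iota(Y_{n,r})$ is the unique component of $Z(\text{adjusted binomials})$ meeting $U$ — the ``main component'' statement — not the equality asserted by the lemma. (The paper's own surrounding text, which speaks of ``the main component of its proper transform,'' shows the authors themselves do not regard extra components as obviously excluded.) To close the gap you would need an actual saturation or primary-decomposition argument for the adjusted binomial ideal, or a stratum-by-stratum check on the finitely many toric strata of $\Bl\PP$ with some $x_\lambda=0$ that the surviving equations force such points into $\iota(Y_{n,r})$; nothing in the proposal does this.
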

 Note that we can lift the action of $G$ on $\Gr(n,r)$ and $X_{P_{n,r}}$ to their blow-ups. 
 \begin{thm} There is a degeneration of $\Bl \Gr(n,r)$ to a possibly disconnected variety whose main component is $Y_{n,r}$.  There is a degeneration of  $\Bl \Gr(n,r)/H_{n,r}$ to a possibly disconnected variety whose main component is $Y_{n,r}/G$. 
 \end{thm}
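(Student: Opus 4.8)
The plan is to blow up the Gelfand--Cetlin degeneration \emph{in a relative way} over its base and then take a proper transform. Recall from \S\ref{sec:smoothing} the family $\tilde Z \subset \PP^{\binom{n}{r}-1}\times\CC^m \xrightarrow{\pi}\CC^m$ whose special fiber $\tilde Z_{\underline{c}_0}$ is $X_{P_{n,r}}$ and whose generic fiber is $\Gr(n,r)$, cut out by the Plücker relations with coefficients $\underline{c}$. The blow-up centers $Z(p_\mu:\mu\in\Cones(\lambda))$ defining $\Bl\PP$ are coordinate subspaces of $\PP^{\binom{n}{r}-1}$ and hence are constant along the $\CC^m$ direction. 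First I would blow up the ambient $\PP^{\binom{n}{r}-1}\times\CC^m$ iteratively, in the fixed order on $\Bnr$, along $Z(p_\mu:\mu\in\Cones(\lambda))\times\CC^m$, obtaining the product $\Bl\PP\times\CC^m$, and then form the proper transform $W$ of $\tilde Z$. Equivalently, and more usefully for computation, $W$ can be written down directly in the Cox ring of $\Bl\PP$ (with the coefficients $\underline{c}$ retained) by homogenizing each coefficient-bearing Plücker relation with the same minimal product of the generators $x_\lambda$, $\lambda\in\Bnr$, that was used to define $\Bl X_{P_{n,r}}$.

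Next I would identify the two ends of the family $W\to\CC^m$. Over a generic point the fiber of $\tilde Z$ is $\Gr(n,r)$, and its proper transform under the iterated blow-up is by definition $\Bl\Gr(n,r)$; thus the generic fiber of $W$ is $\Bl\Gr(n,r)$. Over $\underline{c}_0=0$ the homogenized equations specialize to the homogenized degenerate Plücker relations, whose main component is exactly $\Bl X_{P_{n,r}}$, which equals $Y_{n,r}$ by the preceding Lemma. Hence the main component of the special fiber is $Y_{n,r}$, with any additional components supported over the exceptional divisors of $\Bl\PP$; this is the source of the ``possibly disconnected'' caveat. Restricting to a general line through $\underline{c}_0$ and $\underline{c}_1$ in the base, and checking flatness over it (via constancy of the Hilbert polynomial, or by exhibiting $W$ as a flat pencil interpolating the binomial and Plücker systems), produces the desired one-parameter degeneration of $\Bl\Gr(n,r)$.

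For the quotient statement I would run the entire construction $G$-equivariantly. Each center $Z(p_\mu:\mu\in\Cones(\lambda))$ is cut out by the vanishing of a subset of Plücker coordinates, and each such coordinate is a $G$-semiinvariant; therefore the centers are $G$-invariant and the $G$-action lifts to $\Bl\PP$, to $Y_{n,r}$, and to $\Bl\Gr(n,r)$. Applying the same blow-up and proper transform to the $G$-equivariant smoothing family $Z\to B$ of Theorem~\ref{thm:Gsmooth} gives a $G$-equivariant family $W\to B$ whose quotient $W/G\to B/G$ is the sought degeneration. By Theorem~\ref{thm:Gsmooth} the relevant generic fiber modulo $G$ is $\Gr(n,r)/H_{n,r}$, so its proper transform is $(\Bl\Gr(n,r))/H_{n,r}$, the blow-up of $\Gr(n,r)/H_{n,r}$ along the images of the invariant centers (blow-up of invariant centers descends to the finite quotient); meanwhile the special fiber is $Y_{n,r}/G$.

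The main obstacle I expect is the second step: controlling the special fiber of $W$. The proper transform of the special fiber and the special fiber of the proper transform can differ by embedded or exceptional components, so the real content is to show that the distinguished component is precisely $Y_{n,r}$ and that the remaining components, forced along the exceptional divisors where both the degenerate binomials and the $x_\lambda$ vanish, do not interfere with the flat limit. This is exactly the phenomenon already isolated in the Lemma identifying $\Bl X_{P_{n,r}}$ with the main component of the proper transform of $X_{P_{n,r}}$, and I would leverage that Lemma together with the explicit minimal homogenization to pin the limit down.
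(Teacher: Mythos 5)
Your proposal is correct and follows essentially the same route as the paper: homogenize the coefficient-bearing Pl\"ucker relations by the generators $x_\lambda$, $\lambda \in \Bnr$, in the Cox ring of $\Bl \PP$, degenerate them to the homogenized binomial relations cutting out $Y_{n,r}$ (up to extra components, hence the ``possibly disconnected'' caveat), and handle the quotient by running the construction $G$-equivariantly and repeating the argument of Theorem \ref{thm:Gsmooth}, using that $G$ acts trivially on the $x_\lambda$. Your treatment is more detailed than the paper's (which is quite terse, in particular on the flatness and proper-transform-versus-special-fiber issue you flag), but the underlying argument is the same.
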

 \begin{proof}
 The process of homogenization can also be applied to $\Gr(n,r)$ viewed as a subvariety of $\PP^{\binom{n}{r}-1}$ under the Pl\U cker embedding. We can degenerate the homogenized Pl\U cker equations to binomials just as in the Gelfand--Cetlin degeneration: these are the homogenized degenerate Pl\U cker relations cutting out $Y_{n,r}$, except possibly with extra factors $x_\lambda$. This proves the first statement. The proof of the second statement is the same as that of Theorem \ref{thm:Gsmooth}, as $G$ acts trivially on the $x_\lambda$,~$\lambda \in \Bnr$. 
 \end{proof}

 \section{A compactification of the Eguchi--Hori--Xiong superpotential }\label{sec:mirrorproposal}
 In the final section, we give a tentative proposal for a  Calabi--Yau mirror to the Fermat Calabi--Yau hypersurface in the Grassmannian. Eguchi--Hori--Xiong constructed a Laurent polynomial that is a mirror partner to the Grassmannian $\Gr(n,r)$~\cite{eguchi}. It is essentially the toric Hori--Vafa mirror of $X_{P_{n,r}}$. If $y_a$,~$a \in \LQ_1$, are Cox coordinates on $X_{P_{n,r}}$, then the EHX mirror is 
 \[W_q:(\CC^*)^{|\LQ_1|} \to \CC, \quad W_q=\sum_{a \in \LQ_1} y_a\]
 where $q=(q_1,\dots,q_{|\overline{\LQ_0}|})$ are coordinates on the base of the family $\pi:(\CC^*)^{|\LQ_1|} \to (\CC^*)^{|\overline{\LQ_0}|}$.
By the fiber of $W$ we mean $W_q^{-1}(c)$ for some $c$ and $q$. 

 Marsh--Rietsch~\cite{MarshRietsch}  have shown that the classical period of $W$, after setting $q_i = 1$ for all~$i$, equals the regularized quantum period of the Grassmannian; for definitions here, see \cite{CoatesCortiGalkinGolyshevKasprzyk2013}. This implies that it is a \emph{very weak mirror} of the Grassmannian in the sense of \cite{przyjalkowski}. There are stronger notions of mirror symmetry. One important one is the following, due to Przyjalkowski~\cite{przyjalkowski}:
 \begin{mydef} Let $f: (\CC^*)^n \to \CC$ be a Laurent polynomial which is a very weak mirror of a Fano variety $X$. Then $f$ is a \emph{weak mirror} if the fibers of $f$ can be compactified to Calabi--Yau varieties $Z^\vee$.
 \end{mydef}
It is expected that if $Z$ is an anticanonical Calabi--Yau hypersurface in $X$, then $(Z,Z^\vee)$ form a Calabi--Yau mirror pair. This is how Fano and Calabi--Yau mirror symmetry are related when $X$ is a toric variety.

 For our proposed mirror of the Fermat Calabi--Yau hypersurface in the Grassmannian, we do not check mirror-equality of Hodge numbers. However, we show that our mirror is a compactification and smoothing of the EHX superpotential, as one would expect. 
 \begin{mydef} Fix $n$ and $r$ and let $\lambda \in \Pnr$ be a partition. We say that $\lambda$ is a maximal rectangle if it is rectangular and either maximally wide or maximally tall. 
 \end{mydef}
 \begin{rem} The variables $p_\lambda$, where $\lambda$ is maximal rectangular, are the \emph{frozen variables} for the cluster algebra of the Grassmannian (they appear in every cluster chart). 
 \end{rem}
 Let $Z_{n,r}^\vee$ be the family of hypersurfaces cut out of $\Bl \Gr(n,r)/H_{n,r}$ by the equation
 \begin{equation} \label{eq:Znr} \sum_{\lambda \in \Anr} p_\lambda^n + \psi \prod_{p_\mu \text{ frozen}} p_\mu=0.\end{equation}
 Note that this a $H_{n,r}$-equivariant equation, so $Z_{n,r}^\vee$ is well-defined. 
 \begin{thm}\label{thm:compactification} $Z_{n,r}$ has trivial anticanonical class, so is Calabi--Yau in this sense. It is a compactification of the fibers of the Eguchi--Hori--Xiong mirror. 
 \end{thm}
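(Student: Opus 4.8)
For the final statement (with $Z_{n,r}^\vee$ the hypersurface of \eqref{eq:Znr}; ``$Z_{n,r}$'' in the statement should read $Z_{n,r}^\vee$) there are two independent claims: that $Z_{n,r}^\vee$ is anticanonical in $\Bl\Gr(n,r)/H_{n,r}$, hence has trivial canonical class by adjunction, and that over the dense torus its fibres recover the fibres of the Eguchi--Hori--Xiong Laurent polynomial $W=\sum_{a\in\LQ_1}y_a$. The plan is to prove the first by a divisor-class computation organised through the toric degeneration to $Y_{n,r}$, and the second by an explicit monomial substitution on the open torus.

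For the anticanonical claim I would first work $H_{n,r}$-equivariantly on $\Bl\Gr(n,r)$: since \eqref{eq:Znr} is $H_{n,r}$-invariant and $H_{n,r}\subset\SL$ preserves the holomorphic volume form, triviality of the canonical class descends to the quotient. Note that the equation is genuinely inhomogeneous on $\Gr(n,r)$ itself---the terms $p_\lambda^n$ have Pl\"ucker degree $n$ while $\prod_{\mu\text{ frozen}}p_\mu$ has degree $n-1$, there being $r+k-1=n-1$ maximal rectangles---so the hypersurface acquires meaning only after homogenising with the exceptional Cox variables $x_\lambda$, $\lambda\in\Bnr$, exactly as in the construction of $\Bl X_{P_{n,r}}$. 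The goal is then to show that the minimal homogenisation of \emph{each} term of \eqref{eq:Znr} lands in the single class $-K_{\Bl\Gr(n,r)}$. I would carry this out on the toric model: by the degeneration of $\Bl\Gr(n,r)$ to a variety with main component $Y_{n,r}$ the divisor class of the hypersurface is unchanged, and by Theorem \ref{thm:vgit} the Cox ring and anticanonical polytope of $Y_{n,r}$ agree with those of $X_{Q_{n,r}}$. There a monomial $\prod_{\lambda}p_\lambda^{a_\lambda}$ is anticanonical precisely when $a_\lambda=1+\langle m,v_\lambda\rangle\ge 0$ for some lattice point $m$ of $Q_{n,r}^\vee$, and the task reduces to exhibiting, for the homogenised image of each term, the lattice point $m$ that certifies it, using the explicit vertices $n\,m_\lambda-h$ and the combinatorics of Section \ref{sec:GC}. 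Once every term is anticanonical, adjunction gives $K_{Z_{n,r}^\vee}=0$.

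For the compactification claim I would use the bijection $\phi:\LQ_1\to\Anr$ sending an arrow to its label partition (recall $|\LQ_1|=|\Anr|=2(r-1)(n-r-1)+n$). On the dense torus of $X_{P_{n,r}}$, with Cox coordinates $y_a$, one has $p_\lambda=\prod_{a\in\lambda}y_a$, and on passing to $\Gr(n,r)/H_{n,r}$ the $H_{n,r}$-invariant functions are generated by the $n$-th powers, just as in the Greene--Plesser picture. Dividing \eqref{eq:Znr} by $\prod_{\mu\text{ frozen}}p_\mu$ and setting $u_a:=p_{\phi(a)}^n\big/\prod_{\mu\text{ frozen}}p_\mu$ converts the equation into $\sum_{a\in\LQ_1}u_a+\psi=0$; I would then check that under the $n$-th-power/quotient identification each $u_a$ becomes the EHX coordinate $y_a$ (up to a reparametrisation of the base $q$), so that the intersection of $Z_{n,r}^\vee$ with the open torus is the fibre $W^{-1}(-\psi)$ and $Z_{n,r}^\vee$ is its closure in $\Bl\Gr(n,r)/H_{n,r}$.

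The main obstacle is the bookkeeping in the first part: confirming that the minimal homogenisation of the two kinds of term matches $-K_{\Bl\Gr(n,r)}$ requires controlling the exceptional multiplicities, equivalently the codimensions inside $\Gr(n,r)$ of the iterated blow-up centres $Z(p_\nu:\nu\in\Cones(\lambda))$ and the order of vanishing of the frozen product along each $E_\lambda$. Routing the computation through $Y_{n,r}$ replaces this geometry by the lattice-point combinatorics of $Q_{n,r}^\vee$, but one must still justify that the degeneration preserves the class of the \emph{homogenised} hypersurface and that it is the proper transform, not the total transform, that carries the anticanonical class (i.e.\ that no excess component interferes). By contrast, the substitution in the second part is comparatively routine once the $\phi$-bijection and the relation $p_\lambda=\prod_{a\in\lambda}y_a$ are in place.
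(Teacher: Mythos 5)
Your outline is close in spirit to the paper's actual proof --- the paper likewise obtains the Calabi--Yau property from the toric side via Theorem \ref{thm:vgit}, and compares with the Eguchi--Hori--Xiong fibre through a monomial identification of the terms of \eqref{eq:Znr} with Cox monomials on $Y_{n,r}$ --- but two of your concrete steps fail. First, the frozen count. You assert there are $r+k-1=n-1$ frozen variables, so that \eqref{eq:Znr} is inhomogeneous and only becomes meaningful after homogenising with exceptional variables. In fact the empty partition (the cyclic interval $\{1,\dots,r\}$) must be counted among the frozen variables: the frozen partitions are exactly the $n$ paths of the crossing diagram all of whose vertices are labelled $X$, and this is forced, not optional. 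Without $p_\emptyset$ the frozen product has $\tilde H_{n,r}$-weight $\prod_{i\leq r}\zeta_i^{-1}$, so \eqref{eq:Znr} would not be $H_{n,r}$-equivariant and $Z_{n,r}^\vee$ would not even be well defined on the quotient; with $p_\emptyset$ included the equation is homogeneous of Pl\"ucker degree $n$, and the frozen paths $1$-cover $\LQ_1$, which is precisely what makes the frozen product correspond to the anticanonical monomial $\prod_{\mu\in\Pnr}z_\mu$ of $Y_{n,r}$ --- the fact your part 1 needs.

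Second, and more seriously, the verification you propose for the compactification claim rests on a false identity. If you identify the EHX torus with the dense torus of $X_{P_{n,r}}$ and substitute $p_\lambda=\prod_{a\in\lambda}y_a$, then $u_a\neq y_a$. Already for $\Gr(4,2)$, labelling arrows so that $p_{(2)}=y_1y_4$, $p_{(1)}=y_1y_3$, $p_{(1,1)}=y_2y_3$, $p_{(2,1)}=y_2y_4$, $p_\emptyset=y_5$, $p_{(2,2)}=y_6$ and $\phi(a_1)=(2)$, one gets
\[
u_{a_1}=\frac{p_{(2)}^4}{p_\emptyset\, p_{(2)}\, p_{(1,1)}\, p_{(2,2)}}=\frac{y_1^3y_4^3}{y_2y_3y_5y_6},
\]
and this differs from $y_1$ by a character that is not pulled back from the base of $\pi$, so no reparametrisation of $q$ repairs it. The correct mechanism, which is what the paper constructs via the matrix $A_P^tA_{P^\vee}$, is a Greene--Plesser-type isogeny going in the \emph{opposite} direction to the one you describe: from the dense torus of $X_{Q_{n,r}}/G$ (not of $X_{P_{n,r}}$) to the EHX fibre torus, under which $y_a$ pulls back to $\prod_{\mu\ni a}z_\mu^n\big/\prod_{\mu\in\Pnr}z_\mu$; the $n$-th powers come from this isogeny, not from ``invariant functions are $n$-th powers'' along the quotient map. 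To identify that pullback with your $u_a$ one needs the combinatorial fact that the homogenisation of $p_{\phi(a)}$ in the Cox ring of $Y_{n,r}$ equals $\prod_{\mu\ni a}z_\mu$ (equivalently the symmetry $a\in\phi(b)\iff b\in\phi(a)$, extended to the excess partitions), and one must further check that the isogeny carries the quotient torus isomorphically onto the relevant fibre of $\pi$ --- otherwise one has compactified a finite cover of the EHX fibre rather than the fibre itself. Neither point appears in your proposal. Note finally that the torus on which your substitution lives is dense in $Y_{n,r}/G$, not in $\Bl\Gr(n,r)/H_{n,r}$, so your closing assertion that $Z_{n,r}^\vee$ is the closure of the fibre still requires the degeneration step from $Y_{n,r}/G$ to $\Bl\Gr(n,r)/H_{n,r}$ that the paper invokes.
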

 \begin{proof} We will first show the second part of the statement, and then that equation \eqref{eq:Znr} is a section of the anticanonical line bundle. The adjunction formula then implies that the anticanonical class of the hypersurface is trivial. Let $A_P$ denote the $r(n-r) \times |\LQ_1|$ matrix with integer entries whose columns are given by the rays of the spanning fan of ${P_{n,r}}$ written in the basis $\{b_v\}$. Let $A_{P^\vee}$ denote the $r(n-r) \times \binom{n}{r}$ matrix with integer entries whose columns are given by the rays of the spanning fan of $P_{n,r}^\vee$ written in the basis $\{b_v^\vee\}$. The $|\LQ_1| \times \binom{n}{r}$ matrix 
 \[ A_{P}^t A_{P^\vee} \]
defines a map
\[ (\CC^*)^{\binom{n}{r}} \to (\CC^*)^{|\LQ_1|} \]
by exponentiating. 

This descends to a map between the dense tori in $X_{P_{n,r}}$ and $X_{P_{n,r}^\vee}$. If we write coordinates on the former as $y_a$,~$a \in \LQ_1$, and coordinates on the latter as $z_\lambda$,~$\lambda \in \Pnr$, then this map is
\[(z_\lambda)_{\lambda \in \Pnr} \mapsto \left(\frac{\prod_{a \in \mu }z_\mu^n}{\prod_{\mu\in \Pnr }z_\mu}\right)_{a \in \LQ_1}.\]
This takes the locus $W=-\psi$ to the locus
\[\sum_{a \in \LQ_1} \prod_{a \in \mu }z_\mu^n+\psi \prod_{\mu\in \Pnr }z_\mu=0,\]
which is a Calabi--Yau hypersurface in $X_{P_{n,r}^\vee}=X_{Q_{n,r}}/G$. Note that this equation is $G$-invariant. 

We now apply Theorem \ref{thm:vgit}, and obtain a Calabi--Yau hypersurface in $Y_{n,r}/G$. Viewing $Y_{n,r}$ as a subvariety of $\Bl \PP$, we can choose an identification of Cox rings so that this hypersurface is cut out by the equation 
\[\sum_{\lambda \in \Anr} p_\lambda^n+\psi \prod_{\mu \text{ frozen}}p_\mu=0.\]
It remains Calabi--Yau even as we deform the equations cutting out $Y_{n,r}$, so we get a Calabi--Yau hypersurface in $\Bl \Gr(n,r)/H_{n,r}$.

 \end{proof}
 
\bibliographystyle{amsplain}
\bibliography{bibliography}
\end{document}